\pgfplotsset{compat=1.15}
\newtheorem{theorem}{Theorem}[section]
\newtheorem{lemma}[theorem]{Lemma}
\newtheorem{proposition}[theorem]{Proposition}
\newtheorem{corollary}[theorem]{Corollary}
\theoremstyle{definition}
\newtheorem{definition}[theorem]{Definition}
\newtheorem{construction}[theorem]{Construction}
\newtheorem{example}[theorem]{Example}
\newtheorem{remark}[theorem]{Remark}
\newtheorem{question}[theorem]{Question}
\DeclarePairedDelimiter{\floor}{\lfloor}{\rfloor}
\DeclarePairedDelimiter{\abs}{\lvert}{\rvert}
\newcommand{\NN}{\mathbb{N}}
\newcommand{\ZZ}{\mathbb{Z}}
\newcommand{\RR}{\mathbb{R}}
\newcommand{\bfx}{\mathbf{x}}
\newcommand{\bfy}{\mathbf{y}}
\newcommand{\bfa}{\mathbf{a}}
\newcommand{\bfb}{\mathbf{b}}
\newcommand{\ub}{\mathbf{u}}
\newcommand{\vb}{\mathbf{v}}
\newcommand{\eb}{\mathbf{e}}
\newcommand{\tb}{\mathbf{t}}
\newcommand{\kk}{\mathbf{k}}
\newcommand{\mm}{\mathbf{m}}
\newcommand{\set}[1]{\left\{ #1 \right\}}
\newcommand{\setcond}[2]{\set{#1 \ \colon \ #2}}
\newcommand{\facets}{\mathcal{F}}
\newcommand{\Nc}{\mathcal{N}}
\newcommand{\boundary}{\partial}
\newcommand{\ant}{\mathrm{ant}}
\newcommand{\trace}{\mathrm{tr}}
\newcommand{\tr}{\mathrm{tr}}
\newcommand{\conv}{\mathrm{conv}}
\newcommand{\Hom}{\mathrm{Hom}}
\newcommand{\verto}{\mathrm{vert}}%注意!!!!!!(be careful!!!!!!)
\newcommand{\into}{\mathrm{int}}%注意!!!!!!(be careful!!!!!!) % or just use \strint below... strict interior..
\newcommand{\strint}{\mathrm{int}}
\newcommand{\Stab}{\mathrm{Stab}}
\begin{document}

%\vspace*{2\baselineskip}
% \par\noindent\rule{\textwidth}{0.5pt}

\author[T.\,Hall]{Thomas~Hall}
\address[T.\,Hall]{School of Mathematical Sciences\\University of Nottingham\\United Kingdom}
\email{pmyth1@nottingham.ac.uk}% Tom

\author[M.\,K{\" o}lbl]{Max~K{\" o}lbl}
\address[M.\,K{\" o}lbl]{Department of Pure and Applied Mathematics, Graduate School of Information Science and Technology, Osaka University, Suita, Osaka 565-0871, Japan}
\email{max.koelbl@ist.osaka-u.ac.jp}% Max

\author[K.\,Matsushita]{Koji~Matsushita}
\address[K.\,Matsushita]{Department of Pure and Applied Mathematics, Graduate School of Information Science and Technology, Osaka University, Suita, Osaka 565-0871, Japan}
\email{k-matsushita@ist.osaka-u.ac.jp}% Koji

\author[S.\,Miyashita]{Sora~Miyashita}
\address[S.\,Miyashita]{Department of Pure and Applied Mathematics, Graduate School of Information Science and Technology, Osaka University, Suita, Osaka 565-0871, Japan}
\email{u804642k@ecs.osaka-u.ac.jp}% Sora

\title{Nearly Gorenstein Polytopes}
\maketitle

\begin{abstract}
    In this paper, we study nearly Gorensteinness of Ehrhart rings arising from lattice polytopes.
    We give necessary conditions and sufficient conditions on lattice polytopes for their Ehrhart rings to be nearly Gorenstein.
    Using this, we give an efficient method for constructing nearly Gorenstein polytopes.
    Moreover, we determine the structure of nearly Gorenstein $(0,1)$-polytopes and characterise nearly Gorensteinness of edge polytopes and graphic matroids.
\end{abstract}

\section{Introduction}
Let $\kk$ be an infinite field, and let us denote the set of nonnegative integers, the set of integers, the set of rational numbers and the set of real numbers by $\mathbb{N}$, $\mathbb{Z}$, $\mathbb{Q}$ and $\mathbb{R}$, respectively.

Let $P \subset \RR^d$ be a \emph{lattice} polytope, which is a convex polytope whose vertices all have integer coordinates.
If we place $P$ at height $1$ in $\RR^{d+1}$ and take the cone over it, we obtain $C_P \subset \RR^{d+1}$.
The \emph{Ehrhart ring} $A(P)$ of $P$ is defined by $\kk[C_P \cap \ZZ^{d+1}]$, where each lattice point $(x_1, \ldots, x_d, k) \in \ZZ^{d+1}$ is identified with a Laurent monomial $t_1^{x_1} \cdots t_d^{x_d} s^k$.
This classical construction allows for the study of ring theoretic notions via polytopes and combinatorics, and vice versa.
%Ehrhart rings have been a popular object of study for %decades, in works such as [cite interesting papers here].

Cohen-Macaulay rings and Gorenstein rings play a central role in commutative algebra.
In the study of rings which are Cohen-Macaulay but not Gorenstein, it has been useful to water down the strong property of being Gorenstein; in fact, many generalised notions of Gorensteinness have been explored.
There are \emph{nearly Gorenstein} rings, \emph{level} rings, and \emph{almost Gorenstein} rings, to name just a few examples.

In this paper, we primarily focus on the nearly Gorenstein property, as introduced in \cite{herzog2019trace}.
Let $R$ be a Cohen-Macaulay ring which is a finitely generated $\NN$-graded $\kk$-algebra.
The definition of nearly Gorenstein arises from studying the non-Gorenstein locus of $R$, which is determined by the \emph{trace} $\tr(\omega_R)$ of the canonical module $\omega_R$ of $R$ (see Definition~\ref{def:trace}).
Explicitly, $R$ is Gorenstein if and only if this trace coincides with the ring itself, i.e. $\tr(\omega_R) = R$.
We call $R$ \emph{nearly Gorenstein} if this trace contains the (unique) maximal graded ideal $\mm$ of $R$, i.e. $\mm \subseteq \tr(\omega_R)$.

Recently, the nearly Gorenstein property has been studied for certain special cases, such as Hibi rings \cite[Theorem 5.4]{herzog2019trace},
edge rings associated to edge polytopes \cite{hibi2021nearly}, numerical semigroup rings \cite{herzog2021canonical}, and projective monomial curves \cite{miyashita2023nearly}.
Moreover, $h$-vectors of nearly Gorenstein homogeneous affine semigroup rings are also studied \cite[Theorem 4.4]{miyashita2022levelness}.

It is a classical result that the lattice polytopes whose Ehrhart rings are Gorenstein are those for which there exists an integer $k$ such that $kP$ is reflexive \cite{batyrev2008}, after an appropriate translation.
In this paper, we study the nearly Gorensteinness of the Ehrhart rings arising from general lattice polytopes.

In Section~\ref{sec:prelim}, we detail the important definitions and results concerning nearly Gorenstein $\kk$-algebras.
We then provide details on Ehrhart rings of lattice polytopes.

In Section~\ref{sec:nearly}, we discuss some relations between nearly Gorensteinness of Ehrhart rings and their polytopes.
We denote the natural pairing between an element $n \in (\RR^d)^*$ and an element $x \in \RR^d$ by $n(x)$.
Let $P \subset \RR^d$ be a lattice polytope and $\facets(P)$ be the set of facets of $P$.
We fix its facet presentation:
\[
    P = \setcond{x\in \RR^d}{n_F(x) \ge -h_F \text{ for all $F\in \facets(P)$}},
\]
where each height $h_F$ is an integer and each inner normal vector $n_F \in (\ZZ^d)^*$ is a \emph{primitive} lattice point, i.e. a lattice point such that the greatest common divisor of its coordinates is $1$.
% It's important that we fix this as one of the main results of Section~\ref{sec:nearly} requires us to manipulate the facet presentation of $P$.

For a lattice polytope $P \subset \RR^d$, we define its \emph{floor polytope} as $\floor{P} \coloneqq \conv(\strint(P) \cap \ZZ^d)$.
We also introduce the \emph{remainder polytope} $\set{P}$ of $P$, whose definition involves the pushing in/out of its facets in a particular way (see Definition~\ref{def:floor_rem} for the explicit details).
These polytopes are central to our study of nearly Gorenstein polytopes.
Also of importance is the \emph{codegree} $a_P$ of a lattice polytope $P$, which is defined as $a_P \coloneqq \min\setcond{k \in \NN}{\strint(kP) \cap \ZZ^d \neq \varnothing}$, i.e. the minimum positive integer you have to dilate $P$ by until its interior contains lattice points \cite{Batyrev2006}.

\bigskip

We now give the main results of Section~\ref{sec:nearly}.
Our first theorem gives a necessary condition and a sufficient condition for a lattice polytope to be nearly Gorenstein.

\begin{theorem}[{Proposition~\ref{prop:dec}} and Theorem~\ref{thm:big_enough_nG}]\label{thm:nec_suf_intro}% Theorem~\ref{thm:big_enough_nG}, and Theorem~\ref{thm:conj1}}]\label{thm:sec3combo}
    Let $P \subset \RR^d$ be a lattice polytope with codegree $a$.
    \begin{enumerate}
        \item If $P$ is nearly Gorenstein, then it has the Minkowski decomposition $P = \floor{aP} + \set{P}$.
        \item Conversely, if $P = \floor{aP} + \set{P}$, then there exists some $K$ such that, for all integers $k \ge K$, the polytope $kP$ is nearly Gorenstein.
    \end{enumerate}
\end{theorem}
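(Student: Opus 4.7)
The plan is to translate the nearly Gorenstein condition $\mm \subseteq \tr(\omega_{A(P)})$ into a combinatorial decomposition of lattice points, using the facet presentation of the cone $C_P$. For the Ehrhart ring $A(P)$, the canonical module $\omega_{A(P)}$ is generated by monomials corresponding to interior lattice points of the dilations $kP$, with its minimal-degree generators in degree $a$ indexed by the lattice points of $\floor{aP}$. A direct computation with the facet inequalities should show that the elements of $\omega_{A(P)}^{-1} = \Hom_{A(P)}(\omega_{A(P)}, A(P))$ in degree $1 - a$ correspond precisely to the lattice points of the remainder polytope $\set{P}$, the ``pushing in/out'' of facets matching exactly the bookkeeping that relates interior lattice points of $aP$ to the facets of $P$. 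Consequently, the degree-$1$ part of the trace $\tr(\omega_{A(P)}) = \omega_{A(P)} \cdot \omega_{A(P)}^{-1}$ is spanned by monomials corresponding to lattice points of the form $v + w$ with $v \in \strint(aP) \cap \ZZ^d$ and $w \in \set{P} \cap \ZZ^d$.

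For part~(1), the inclusion $\floor{aP} + \set{P} \subseteq P$ follows directly by summing the facet inequalities defining the two summands against those of $P$. For the reverse inclusion, the nearly Gorenstein hypothesis forces $\mm \subseteq \tr(\omega_{A(P)})$ in degree $1$, so by the description above every lattice point $x \in P$ lies in $(\strint(aP) \cap \ZZ^d) + (\set{P} \cap \ZZ^d) \subseteq \floor{aP} + \set{P}$. Since $P$ is a lattice polytope, $P = \conv(P \cap \ZZ^d)$, which yields $P \subseteq \floor{aP} + \set{P}$ on taking convex hulls.

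For part~(2), I would choose $K \geq a$ so that $kP$ is normal for all $k \geq K$ (the existence of such $K$ is classical, due to Bruns--Gubeladze) and so that the codegree drops to $a_{kP} = 1$. Then $\floor{a_{kP} \cdot kP} = \floor{kP}$, and $\set{kP}$ admits a simple description as the polytope cut out by $n_F(w) \geq -1$ for every facet $F$. The Minkowski hypothesis $P = \floor{aP} + \set{P}$ dilates to a polytopal decomposition of $kP$, and normality allows this to be upgraded to a decomposition of every lattice point of $kP$ as $v + w$ with $v \in \strint(kP) \cap \ZZ^d$ and $w \in \set{kP} \cap \ZZ^d$; reversing the argument of part~(1) then gives $\mm \subseteq \tr(\omega_{A(kP)})$.

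The main obstacle is the transition from polytopal to lattice-point Minkowski decompositions. In part~(1), the nearly Gorenstein hypothesis provides this lifting for free via the explicit description of the trace. In part~(2), however, the polytopal hypothesis on $P$ must be upgraded to an integer Minkowski decomposition for the much larger lattice point set in $kP$; passing to high dilations is essential, as normality of $kP$ is precisely what guarantees the existence of the required integer summands.
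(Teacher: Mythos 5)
Your treatment of part (1) matches the paper's proof of Proposition~\ref{prop:dec}: the forward inclusion $\floor{aP}+\set{P}\subseteq P$ is obtained by summing facet inequalities, and the reverse by decomposing each lattice point (in particular each vertex) of $P$ using the degree-$1$ piece of the trace ideal and then taking convex hulls. One detail you pass over is why the degree-$1$ part of $\tr(\omega_{A(P)})$ is generated only by products of a degree-$a$ generator of $\omega_{A(P)}$ with a degree-$(1-a)$ generator of $\omega_{A(P)}^{-1}$; a priori a degree-$b$ element of $\omega_{A(P)}$ with $b>a$ could pair with a degree-$(1-b)$ element of $\omega_{A(P)}^{-1}$, so one must check that $\ant(C_P)_{1-b}\cap\ZZ^d$ is empty for $b>a$ (outside the Gorenstein case). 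The paper makes this claim explicitly in the proof of Proposition~\ref{prop:nearly}; you should, too.

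Part (2) contains a genuine gap. You correctly identify the obstacle as the transition from the polytopal Minkowski decomposition to a lattice-point-level one, but you then assert that normality of $kP$ closes it. It does not. Normality (the IDP of $kP$) says that every lattice point of $mkP$ is a sum of $m$ lattice points of $kP$; it says nothing about decomposing lattice points of $kP$ as $v+w$ with $v\in\floor{kP}\cap\ZZ^d$ and $w\in\set{kP}\cap\ZZ^d$, which is a statement about a Minkowski pair of two different polytopes, not about dilates of a single one. These are two independent hypotheses, and both appear separately in Lemma~\ref{lem:flo_rem}(4). The paper supplies the pair-IDP via Haase--Hofmann's Theorem~\ref{thm:idp_pair}: since $(k-a)P$ is a Minkowski summand of $\floor{kP}$ by Lemma~\ref{lem:kP}(2), every edge of $\floor{kP}$ has lattice length at least $k-a$, while $\set{kP}=\set{aP}$ is a fixed polytope, so once $k\ge K:=dL+a$ (with $L$ the maximal edge length of $\set{aP}$) the hypothesis $\ell_E\ge d\,\ell_{E'}$ of Theorem~\ref{thm:idp_pair} is met and the pair has the integer decomposition property. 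You also need a separate, lower bound $K\ge d-1$ (Theorem~\ref{cor:idp_d-1}) to guarantee that $kP$ itself is IDP; that is the only place where a ``normality for large dilates'' result enters. Without an argument giving the pair-IDP, your proof of part (2) is incomplete.
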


The next main theorem gives facet presentations for the floor and remainder polytopes appearing in the Minkowski decomposition of a nearly Gorenstein polytope.

\begin{theorem}[{Theorem~\ref{thm:conj1}}]\label{thm:conj1_intro}
    Let $P \subset \RR^d$ be a lattice polytope with codegree $a$.
    Suppose that $P = \floor{aP} + \set{P}$.
    Then
    \begin{align*}
        \floor{aP} &= \setcond{x \in \RR^d}{n_F(x) \ge 1 - ah_F \text{ for all } F \in \facets(P)} \text{ and } \\
        \set{P} &= \setcond{x \in \RR^d}{n_F(x) \ge (a-1)h_F - 1 \text{ for all } F \in \facets(P)}.
    \end{align*}
    Furthermore, if $\floor{P} \neq \varnothing$, then $\set{P}$ is reflexive.
\end{theorem}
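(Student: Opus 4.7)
Write $Q_1 := \setcond{x \in \RR^d}{n_F(x) \ge 1 - ah_F \text{ for all } F \in \facets(P)}$ and $Q_2 := \setcond{x \in \RR^d}{n_F(x) \ge (a-1)h_F - 1 \text{ for all } F \in \facets(P)}$. The plan is to first establish the one-sided inclusions $\floor{aP} \subseteq Q_1$ and $\set{P} \subseteq Q_2$, then use the hypothesised Minkowski decomposition to force both to be equalities via a support-function cancellation, and finally read off reflexivity in the codegree-one case.

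For $\floor{aP} \subseteq Q_1$: every $x \in \strint(aP) \cap \ZZ^d$ satisfies $n_F(x) > -ah_F$ and $n_F(x) \in \ZZ$, hence $n_F(x) \ge 1 - ah_F$, and the inclusion follows on taking convex hulls. The inclusion $\set{P} \subseteq Q_2$ should drop out directly from unpacking Definition~\ref{def:floor_rem}, since the push-in/out construction controls each facet by exactly the amount allowed in the definition of $Q_2$. Summing the two systems of inequalities facet by facet gives $Q_1 + Q_2 \subseteq P$, while the hypothesis $P = \floor{aP} + \set{P}$ combined with the two inclusions above gives $P = \floor{aP} + \set{P} \subseteq Q_1 + Q_2$. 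Hence $P = Q_1 + Q_2 = \floor{aP} + \set{P}$.

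To extract the individual equalities I would pass to support functions $h_K(u) = \sup_{x \in K} u(x)$. Additivity under Minkowski sum gives $h_{\floor{aP}} + h_{\set{P}} = h_P = h_{Q_1} + h_{Q_2}$ pointwise, while the one-sided inclusions give $h_{\floor{aP}} \le h_{Q_1}$ and $h_{\set{P}} \le h_{Q_2}$ pointwise. Thus $(h_{Q_1} - h_{\floor{aP}}) + (h_{Q_2} - h_{\set{P}}) \equiv 0$ with both summands nonnegative, so both vanish identically; since the support function determines a convex body, $\floor{aP} = Q_1$ and $\set{P} = Q_2$. For the final assertion, if $\floor{P} \ne \varnothing$ then $\strint(P)$ contains a lattice point and $a = 1$, in which case the formula becomes $\set{P} = \setcond{x \in \RR^d}{n_F(x) \ge -1 \text{ for all } F \in \facets(P)}$; this visibly contains the origin in its interior, and its polar dual is $\conv\setcond{-n_F}{F \in \facets(P)}$, a lattice polytope since each $n_F$ is a primitive lattice vector, so (granted that $\set{P}$ is itself a lattice polytope, as Definition~\ref{def:floor_rem} should guarantee) $\set{P}$ is reflexive.

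The main obstacle I anticipate is the clean verification of $\set{P} \subseteq Q_2$ and of the lattice property of $\set{P}$ directly from the push-in/out definition; once those are in hand, the Minkowski-sum cancellation and the reflexivity check are both short.
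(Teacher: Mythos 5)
Your proof is correct, and it shares the same scaffolding as the paper's (set up $Q_1,Q_2$, establish the one-sided inclusions $\floor{aP}\subseteq Q_1$ and $\set{P}\subseteq Q_2$, combine with $Q_1+Q_2\subseteq P$ and the hypothesis to get $P=Q_1+Q_2$). Where you diverge is the step that upgrades the inclusions to equalities: the paper argues by contradiction, picking a vertex $v\in Q_1\setminus\floor{aP}$ and a linear functional $n$ minimised over $Q_1$ uniquely at $v$, then showing the minimum of $n$ over $\floor{aP}+\set{P}$ would exceed the minimum over $Q_1+Q_2=P$. Your support-function cancellation is the global version of this same idea: from $h_{\floor{aP}}+h_{\set{P}}=h_P=h_{Q_1}+h_{Q_2}$ and the two pointwise inequalities $h_{\floor{aP}}\le h_{Q_1}$, $h_{\set{P}}\le h_{Q_2}$, both differences must vanish. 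This buys you a shorter, directionless argument that avoids choosing a generic normal; the paper's version is more elementary in that it never names the notion of a support function. Both are sound.

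Two small points worth tidying. First, the inclusion $\set{P}\subseteq Q_2$ that you flagged as the potential obstacle is in fact immediate: by Definition~\ref{def:floor_rem}, $\set{P}=\conv(\ant(C_P)_{1-a}\cap\ZZ^d)$, and $\ant(C_P)_{1-a}$ is cut out precisely by $n_F(x)\ge (a-1)h_F-1$, which is the defining system of $Q_2$; so $\set{P}=\conv(Q_2\cap\ZZ^d)\subseteq Q_2$ and, in particular, $\set{P}$ is automatically a lattice polytope. Second, with the paper's convention $Q^*=\setcond{n}{n(x)\ge -1 \text{ for all } x\in Q}$, the dual of $\set{P}=\setcond{x}{n_F(x)\ge -1}$ is $\conv\setcond{n_F}{F\in\facets(P)}$, not $\conv\set{-n_F}$; you appear to have slipped into the $\le 1$ normalisation. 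This is harmless for the conclusion, since either sign yields a lattice polytope.
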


These results allow us to prove the final main theorem of Section~\ref{sec:nearly}.
It reveals that the primitive inner normal vectors of a nearly Gorenstein polytope come from boundary points of reflexive polytopes.

\begin{theorem}\label{thm:normal_fan_intro}
    % for lattice polytope P: Can probably strengthen to: P = \floor{aP} + \set{P} iff P has this facet presentation
    Let $P \subset \RR^d$ be a nearly Gorenstein polytope.
    Then there exists a reflexive polytope $Q \subset \RR^d$ such that
    \[
        P = \setcond{x \in \RR^d}{n(u) \ge -h_n \text{ for all } n \in \boundary Q^* \cap (\ZZ^d)^*},
    \]
    where $h_n$ are integers.
    Moreover, the inequalities defined by $n \in \verto(Q^*)$ are irredundant.
    Furthermore, the number of facets of a nearly Gorenstein polytope is bounded by a constant depending on the dimension $d$.
\end{theorem}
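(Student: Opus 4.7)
The plan is to produce the reflexive polytope $Q$ by applying the reflexivity conclusion of Theorem~\ref{thm:conj1_intro} to the dilate $aP$, where $a = a_P$ is the codegree of $P$. Since $P$ is nearly Gorenstein, Proposition~\ref{prop:dec} gives the Minkowski decomposition $P = \floor{aP} + \set{P}$, and Theorem~\ref{thm:conj1_intro} supplies explicit facet presentations of both summands. Because $aP$ has codegree $1$, its floor polytope is nonempty, so the hypothesis $\floor{aP} \ne \varnothing$ in Theorem~\ref{thm:conj1_intro} will be automatic once the analogous Minkowski decomposition of $aP$ is in hand.

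\textbf{Producing $Q$.} The first step is to verify that $aP$ admits the decomposition $aP = \floor{aP} + \set{aP}$. The cleanest route is a support-function comparison at each facet direction $-n_F$: using $P = \floor{aP} + \set{P}$ together with the bounds $h_{\floor{aP}}(-n_F) \le ah_F - 1$ and $h_{\set{P}}(-n_F) \le 1 - (a-1)h_F$ coming from the facet presentations of Theorem~\ref{thm:conj1_intro}, the fact that their sum already equals $h_P(-n_F) = h_F$ forces both inequalities to be equalities. An analogous computation with the candidate summands $\floor{aP}$ and $\set{aP}$ then reduces matters to $h_{\set{aP}}(-n_F) = 1$, which should follow directly from the defining properties of the remainder polytope (Definition~\ref{def:floor_rem}). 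Once established, Theorem~\ref{thm:conj1_intro} applied to $aP$ yields that $\set{aP}$ is reflexive with
\[
    \set{aP} = \setcond{x \in \RR^d}{n_F(x) \ge -1 \text{ for all } F \in \facets(P)},
\]
and I set $Q \coloneqq \set{aP}$.

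\textbf{Consequences.} The remaining assertions follow from the properties of $Q$ and $Q^*$. Each $n_F$ satisfies $n_F(x) \ge -1$ on $Q$, so $n_F \in Q^*$; since $Q$ is reflexive, $Q^*$ is a reflexive lattice polytope whose unique interior lattice point is $0$, and as each $n_F$ is primitive and nonzero it must lie on $\boundary Q^*$. Taking $h_n \coloneqq -\min_{x \in P} n(x)$ for each $n \in \boundary Q^* \cap (\ZZ^d)^*$ then gives the desired facet presentation of $P$. For irredundancy, the vertices of $Q^*$ are the primitive inner normals of the facets of $Q$, and since $Q$ is cut out by the inequalities $n_F(x) \ge -1$, each vertex of $Q^*$ is one of the $n_F$, hence a facet normal of $P$. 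For the bound on facets, $|\facets(P)| \le |\boundary Q^* \cap (\ZZ^d)^*|$, and by the classical Lagarias--Ziegler finiteness of reflexive polytopes in each dimension up to unimodular equivalence, this lattice-point count is bounded by a constant depending only on $d$.

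\textbf{Main obstacle.} The most delicate part of the argument is the verification of the Minkowski decomposition $aP = \floor{aP} + \set{aP}$, since the scaling identity $aP = a\floor{aP} + a\set{P}$ obtained directly from the decomposition of $P$ does not produce the summands required by Theorem~\ref{thm:conj1_intro}. Everything else is structural once the reflexivity of $\set{aP}$ is secured.
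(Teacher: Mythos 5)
Your overall strategy is exactly the paper's: set $Q \coloneqq \set{aP}$, use Theorem~\ref{thm:conj1} applied to $aP$ (which has codegree $1$) to get that $Q$ is reflexive and cut out by the inequalities $n_F(x) \ge -1$, place each facet normal $n_F$ on $\boundary Q^*$, and bound the facet count by the Lagarias--Ziegler finiteness of reflexives. The ``consequences'' portion of your argument is correct and matches the paper. The gap is in the delicate step you flagged: establishing $aP = \floor{aP} + \set{aP}$.

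Two problems with the support-function route. First, even granting $h_{\floor{aP}}(-n_F) + h_{\set{aP}}(-n_F) = h_{aP}(-n_F)$ for every facet $F$ of $P$, this does \emph{not} yield $aP = \floor{aP} + \set{aP}$: support-function agreement at the facet directions of $aP$, together with the easy inclusion $\floor{aP} + \set{aP} \subseteq aP$, is insufficient (consider the diamond inscribed in the square, which matches the square's support function on all four facet normals but is strictly smaller). You would additionally need to know that the normal fan of $\floor{aP} + \set{aP}$ is supported on $\{n_F\}$, i.e.\ that $\set{aP}$ has no facets in other directions --- but that is precisely what Theorem~\ref{thm:conj1} would give you \emph{after} the decomposition of $aP$ is established, so the argument is circular. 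Second, $h_{\set{aP}}(-n_F) \ge 1$ does not ``follow directly'' from Definition~\ref{def:floor_rem}; $\set{aP}$ is the convex hull of the lattice points of the rational polytope $\{n_F(x) \ge -1\}$, and you must actually exhibit a lattice point at height $-1$ on each facet.

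The paper closes both gaps at once with Lemma~\ref{lemma:aP_decomp}, which you should use in place of the support-function comparison. It shows $(a-1)P + \set{P} \subseteq \set{aP}$ by noting that for lattice points $x \in (a-1)P \cap \ZZ^d$ and $y \in \set{P} \cap \ZZ^d$ one has $n_F(x+y) \ge -(a-1)h_F + ((a-1)h_F - 1) = -1$, so $x+y$ is a lattice point of $\ant(C_{aP})_0$, hence in $\set{aP}$; taking convex hulls gives the inclusion. Adding $\floor{aP}$ to both sides and using $aP = \floor{aP} + (a-1)P + \set{P}$ gives $aP \subseteq \floor{aP} + \set{aP}$. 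The reverse inclusion is the easy inequality check you already have. This avoids any appeal to facet normals of $\set{aP}$ and is the missing ingredient in your proof.
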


We then use Theorem~\ref{thm:normal_fan_intro} to derive an efficient method for constructing nearly Gorenstein polytopes.
Using this method, we find an example of a nearly Gorenstein polytope which does not have a Minkowski decomposition into Gorenstein polytopes (Example~\ref{exa:counter}).
We conclude the section by studying Minkowski indecomposable nearly Gorenstein polytopes; in particular, we show that they are in fact Gorenstein.

In Section~\ref{sec:(0,1)}, we study nearly Gorenstein $(0,1)$-polytopes.
This family of polytopes includes many subfamilies of polytopes which arise in combinatorics, such as order polytopes of posets and base polytopes from graphic matroids.
Previous work has studied nearly Gorensteinness of Hibi rings \cite{herzog2019trace} and of Ehrhart rings of stable set polytopes arising from perfect graphs \cite{hibi2021nearly,miyazaki2022h-perfect}.
The main result of this section generalises these previous results by characterising a large class of nearly Gorenstein $(0,1)$-polytopes:

\begin{theorem}[Theorem~\ref{01}]\label{01_intro}
    Let $P$ be a $(0,1)$-polytope which has the integer decomposition property.
    Then, $P$ is nearly Gorenstein if and only if $P = P_1 \times \cdots \times P_s$, for some Gorenstein $(0,1)$-polytopes $P_1, \ldots, P_s$ which satisfy $|a_{P_i} - a_{P_j}| \le 1$, for $1 \le i < j \le s$.
\end{theorem}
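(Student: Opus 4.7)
The plan is to prove the two implications separately, leaning on the structural results from Section~\ref{sec:nearly} for the harder ``only if'' direction.

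For sufficiency, assume $P = P_1 \times \cdots \times P_s$ as described. Since products of lattice polytopes correspond to Segre products of Ehrhart rings, $A(P) \isom A(P_1) \# \cdots \# A(P_s)$, and each $A(P_i)$ is Gorenstein. It then suffices to show that a Segre product of Gorenstein standard graded $\kk$-algebras is nearly Gorenstein precisely when the corresponding $a$-invariants differ pairwise by at most one. This is a direct computation using the K{\"u}nneth-type description of the canonical module of a Segre product together with the definition of trace: the assumption $|a_{P_i} - a_{P_j}| \le 1$ (which transfers into the statement that the $a$-invariants of $A(P_i)$ differ pairwise by at most one) ensures that $\tr(\omega_{A(P)})$ contains the graded maximal ideal $\mm$.

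For necessity, suppose $P$ is a nearly Gorenstein $(0,1)$-polytope with the IDP. By Theorem~\ref{thm:nec_suf_intro}(1) and Theorem~\ref{thm:conj1_intro}, $P$ admits the Minkowski decomposition $P = \floor{aP} + \set{P}$ with explicit facet presentations, and Theorem~\ref{thm:normal_fan_intro} provides a reflexive polytope whose boundary lattice points contain all the primitive inner facet normals of $P$, with the vertices giving the irredundant inequalities. The crucial step is to translate the $(0,1)$-hypothesis into a strong restriction on these normals: the local geometry at a vertex $v \in P \cap \set{0,1}^d$, combined with IDP, forces the primitive facet normals supporting $v$ to have small coordinate support. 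Propagating this constraint across all vertices, the coordinate set $\set{1, \ldots, d}$ partitions into disjoint blocks $I_1, \ldots, I_s$ such that every primitive facet normal is supported within a single block, which yields the product decomposition $P = P_1 \times \cdots \times P_s$ with $P_i \subseteq \RR^{I_i}$.

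It remains to check that each $P_i$ is Gorenstein and that their codegrees differ pairwise by at most one. The first follows because the reflexive polytope supplied by Theorem~\ref{thm:normal_fan_intro} restricts compatibly to each block, so the normal fan of $P_i$ is the normal fan of a reflexive polytope, and hence some integer dilation of $P_i$ is reflexive after translation --- which is Batyrev's characterisation of Gorensteinness. The codegree bound then comes from running the sufficiency argument in reverse: the nearly Gorenstein condition on the Segre product forces the pairwise differences of the $a$-invariants of $A(P_i)$ to be at most one. The main obstacle is the block decomposition step, where the reflexive normal fan structure together with the $(0,1)$-IDP hypothesis must be shown to genuinely force the coordinates to split into product-compatible blocks, rather than merely admitting some looser factorisation; I expect that careful use of the vertices of $P \cap \set{0,1}^d$ as points where facet cones assemble into a standard orthant will do the job.
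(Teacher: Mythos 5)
Your sufficiency argument is correct and matches the paper's: both reduce to the fact that a Segre product of Gorenstein standard graded algebras is nearly Gorenstein exactly when the $a$-invariants differ pairwise by at most one (the paper cites Herzog--Hibi--Stamate, Corollary 2.8 of \cite{herzog2019measuring}). For the necessity direction, however, there are two genuine problems.

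First, the product decomposition should not be extracted from the nearly Gorenstein hypothesis at all. The paper's Lemma~\ref{dec01} shows that \emph{every} $(0,1)$-polytope, nearly Gorenstein or not, decomposes as a direct product of Minkowski-indecomposable $(0,1)$-polytopes. The argument is elementary: in any Minkowski decomposition $P = P_1' + P_2'$ of a $(0,1)$-polytope into $(0,1)$-polytopes, for each coordinate $i$ either $\pi_i(P_1') = \{0\}$ or $\pi_i(P_2') = \{0\}$, which is precisely the condition for the sum to be a product after a coordinate split. Your plan of wrestling the coordinate blocks out of the structure of the facet normals provided by Theorem~\ref{thm:normal_fan_intro} is both unnecessary and, as stated, not a proof --- you acknowledge yourself that the ``propagation'' step is only an expectation. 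Nothing in Theorem~\ref{thm:normal_fan_intro} forces the facet normals to partition along coordinate blocks.

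Second, your argument that each factor $P_i$ is Gorenstein is wrong. You claim that because the normal fan of $P_i$ is the normal fan of a reflexive polytope, some dilate of $P_i$ is a reflexive translate. This implication fails: having a reflexive normal fan is necessary but not sufficient for Gorensteinness, since it constrains only the facet directions and not the heights. (Already in dimension one, $[0,3]$ shares its normal fan with the reflexive segment $[-1,1]$, yet $[0,3]$ is not Gorenstein --- no dilate has a unique interior lattice point at distance one from both endpoints.) The paper instead derives Gorensteinness of each factor from Theorem~\ref{thm:indecomposable} (indecomposable and nearly Gorenstein implies Gorenstein), after first passing nearly Gorensteinness down to each factor through Lemma~\ref{miyashita}(1) on Segre products. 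That is the step your outline is missing, and it is the one that actually carries the content of the theorem.
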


In Subsection~\ref{subsec:char}, we go into more detail how Theorem~\ref{01_intro} extends previous results which appear in the literature.
Subsequently, we obtain a number of our own interesting corollaries from Theorem~\ref{01_intro}.
For example, we show that every nearly Gorenstein $(0,1)$-polytope which has the integer decomposition property is level (Corollary~\ref{01NGimpliesL}).
Furthermore, we characterise nearly Gorenstein edge polytopes which have the integer decomposition property (Corollary~\ref{cor:edge_polytopes}) and nearly Gorenstein base polytopes arising from graphic matroids (Corollary~\ref{cor:graphic_matroids}).

\iffalse
% Maybe this does not have to be stated explicitly.
\begin{corollary}
    Let $G$ be a connected simple graph satisfying the odd cycle condition.
    Then, the edge polytope $P_G$ of $G$ is nearly Gorenstein if and only if $P_G$ is Gorenstein or $G = K_{n,n+1}$ for some $n\ge 2$.
\end{corollary}
\fi

% Perhaps not necessary to include the below as we already outline the paper earlier:
 % The structure of this paper is as follows.
 % In Section 2, we prepare some facts and lemmas associated with nearly Gorenstein $\kk$-algebras and the
 % Ehrhart rings of lattice polytopes.
 % In Section 3, we discuss nearly Gorensteinness of Ehrahrt rings and we give some important necessary conditions.
 % In Section 4, we determine the structure of nearly Gorenstein $(0,1)$-polytopes and discuss nearly Gorensteinness of the certain classes of $(0,1)$-polytopes.
 %nearly Gorenstein edge polytopes and nearly Gorenstein graphic matroids.

\subsection*{Acknowledgements}
The first author was supported by JSPS Predoctoral fellowship (short-term) PE22729 while undertaking this work and would like to thank his supervisors Alexander Kasprzyk and Johannes Hofscheier for their useful comments during the write up.
The third author is partially supported by Grant-in-Aid for JSPS Fellows Grant JP22J20033.
We are grateful to Professor Akihiro Higashitani for his very helpful comments and instructive discussions.

\section{Preliminaries and auxiliary lemmas}\label{sec:prelim}
\subsection{Nearly Gorenstein $\kk$-algebras}
Let $R$ be a finitely generated $\NN$-graded $\kk$-algebra with unique graded maximal ideal $\mm$.
We will always assume that $R$ is Cohen-Macaulay and admits a canonical module $\omega_R$.
We call $a(R)$ the $a$-invariant of $R$, i.e.
$$a(R) = -\min\setcond{i \in \NN}{(\omega_R)_i \neq 0},$$ where $(\omega_R)_i$ is the $i$-th graded piece of $\omega_R$.

\begin{definition}\label{def:trace}
    For a graded $R$-module $M$, let $\tr_R(M)$ be the sum of the ideals $\phi(M)$ over all $\phi \in \Hom_R(M,R)$, i.e.
    \[
        \tr_R(M)=\sum_{\phi \in \Hom_R(M,R)}\phi(M).
    \]
    When there is no risk of confusion about the ring, we simply write $\tr(M)$.
\end{definition}

\begin{definition}[{\cite[Definition 2.2]{herzog2019trace}}]
    We say that $R$ is $\textit{nearly Gorenstein}$ if $\tr(\omega_R) \supseteq \mathbf{m}$.
    In particular, $R$ is Gorenstein
    if and only if $\tr(\omega_R) = R$.
\end{definition}

\begin{proposition}[{\cite[Lemma 1.1]{herzog2019trace}}]\label{prop:trace}
    Let $R$ be a ring and $I$ an ideal of $R$ containing
    a non-zero divisor of $R$. Let $Q(R)$ be the total
    quotient ring of fractions of $R$ and set $I^{-1} := \setcond{x \in Q(R)}{xI \subseteq R}.$
    Then
    \[
        \tr(I) = I \cdot I^{-1}.
    \]
\end{proposition}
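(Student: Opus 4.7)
The plan is to establish a canonical $R$-module isomorphism $\Hom_R(I,R) \cong I^{-1}$ that identifies each homomorphism with multiplication by an element of $Q(R)$; once this is in place, the trace formula falls out immediately by summing over homomorphisms.

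In one direction, for $x \in I^{-1}$ the assignment $\phi_x(b) := xb$ defines an element of $\Hom_R(I,R)$ by the very definition of $I^{-1}$ together with $R$-linearity of multiplication. For the reverse direction, I would use the hypothesis that $I$ contains a non-zero divisor $a$. Given $\phi \in \Hom_R(I,R)$ and any $b \in I$, $R$-linearity gives
\[
    a \cdot \phi(b) \;=\; \phi(ab) \;=\; \phi(ba) \;=\; b \cdot \phi(a),
\]
and since $a$ is a non-zero divisor this may be rearranged inside $Q(R)$ as $\phi(b) = (\phi(a)/a)\,b$. Setting $x_\phi := \phi(a)/a \in Q(R)$, one sees simultaneously that $x_\phi \cdot I \subseteq R$ (so $x_\phi \in I^{-1}$) and that $\phi$ is precisely multiplication by $x_\phi$. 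The element $x_\phi$ is independent of the choice of non-zero divisor: for any second non-zero divisor $a' \in I$, the identity $a' \phi(a) = \phi(aa') = a \phi(a')$ gives $\phi(a)/a = \phi(a')/a'$ in $Q(R)$.

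The two constructions $x \mapsto \phi_x$ and $\phi \mapsto x_\phi$ are mutually inverse, so $\Hom_R(I,R) \cong I^{-1}$ as $R$-modules, and under this bijection $\phi(I) = x_\phi \cdot I$. Summing over homomorphisms then yields
\[
    \tr(I) \;=\; \sum_{\phi \in \Hom_R(I,R)} \phi(I) \;=\; \sum_{x \in I^{-1}} x\,I \;=\; I \cdot I^{-1},
\]
the last equality because $I \cdot I^{-1}$ is by definition the $R$-submodule of $Q(R)$ generated by such products (and lands in $R$). The only technical obstacle is handling the division $\phi(a)/a$ inside $Q(R)$ correctly, which is exactly the role played by the non-zero divisor hypothesis; the rest of the argument is purely formal.
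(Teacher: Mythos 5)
Your proof is correct. The paper does not give its own argument for this proposition but simply cites it from Herzog--Hibi--Stamate; your argument is the standard one for this fact, establishing the identification $\Hom_R(I,R)\cong I^{-1}$ via the non-zero divisor $a\in I$ and the identity $a\,\phi(b)=b\,\phi(a)$, then reading off the trace formula. One small remark for completeness: you implicitly use that $R$ is commutative when writing $\phi(ab)=\phi(ba)$, which is consistent with the paper's standing conventions, and that non-zero divisors of $R$ become units in $Q(R)$, which is what licenses the division by $a$.
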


\begin{definition}[{\cite[Chapter III, Proposition 3.2]{stanley2007combinatorics}}]\label{def:level}
    We say that $R$ is\;$\textit{level}$\: if all the degrees of the minimal generators of $\omega_R$ are the same.
\end{definition}

Let $R = \bigoplus_{n \ge 0} R_n$ and $S = \bigoplus_{n \ge 0} S_n$ be standard $\kk$-algebras and define their \textit{Segre product} $R\# S$ as the graded algebra:
\[
    R \# S = (R_0 \otimes_{\kk} S_0) \oplus (R_1 \otimes_{\kk} S_1) \oplus \cdots \subseteq R \otimes_{\kk}S.
\]
We denote a homogeneous element $x\otimes_{\kk}y \in R_i\otimes_{\kk}S_i$ by
$x\#y$.

\begin{proposition}[{\cite[Proposition 2.2 and Theorem 2.4]{herzog2019measuring}}]\label{ABC}
    Let $R_1, \cdots, R_s$ be standard graded Cohen-Macaulay toric $\kk$-algebras which have Krull dimension at least 2.
    Let $R = R_1 \# R_2 \# \cdots \# R_s$ be the Segre product.
    Then the following is true.
    $$\omega_R = \omega_{R_1} \# \omega_{R_2} \# \cdots \# \omega_{R_s} \quad \text{ and } \quad {\omega_R^{-1}} = {\omega_{R_1}^{-1}} \# {\omega_{R_2}^{-1}} \# \cdots \# {\omega_{R_s}^{-1}}.$$
\end{proposition}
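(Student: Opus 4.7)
The plan is to reduce everything to the combinatorics of affine semigroups via the Danilov--Stanley description of canonical modules. Since each $R_i$ is a standard graded Cohen--Macaulay toric $\kk$-algebra, write $R_i = \kk[\Lambda_i]$ where $\Lambda_i = C_i \cap M_i$ is the set of lattice points in a pointed rational polyhedral cone $C_i$, with grading given by a linear functional $\deg\colon M_i \to \ZZ$. The Segre product $R$ identifies with $\kk[\Lambda]$, where $\Lambda$ is the affine semigroup of tuples $(m_1,\ldots,m_s) \in \Lambda_1 \times \cdots \times \Lambda_s$ satisfying $\deg m_1 = \cdots = \deg m_s$; geometrically, $\Lambda = C \cap M$ where $C$ is the intersection of $C_1 \times \cdots \times C_s$ with the diagonal-degree subspace $L$ and $M$ is the corresponding lattice.

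The first step is the key convex-geometric identity
\[
    \mathrm{relint}(C) = \{ (x_1,\ldots,x_s) : x_i \in \mathrm{relint}(C_i) \text{ and } \deg x_1 = \cdots = \deg x_s \}.
\]
The inclusion $\supseteq$ is immediate. For $\subseteq$, I would apply the standard fact that $\mathrm{relint}(D \cap L) = \mathrm{relint}(D) \cap L$ whenever $L \cap \mathrm{relint}(D) \neq \varnothing$. The hypothesis $\dim R_i \geq 2$ enters here: it forces each slice $\{x_i \in C_i : \deg x_i = d\}$ to be positive-dimensional for some common $d$, ensuring $L$ meets $\mathrm{relint}(C_1 \times \cdots \times C_s)$. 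Combined with the Danilov--Stanley formula $\omega_{\kk[\Lambda]} = \bigoplus_{m \in \mathrm{relint}(C) \cap M} \kk \cdot t^m$, the displayed identity translates degree-by-degree into $\omega_R = \omega_{R_1} \# \cdots \# \omega_{R_s}$.

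For the formula for $\omega_R^{-1}$, I would use the analogous lattice description: via Proposition~\ref{prop:trace}, $\omega_{\kk[\Lambda]}^{-1}$ is the $\kk$-span of monomials $t^u$ (with $u$ in the ambient group of $\Lambda$) satisfying $u + (\mathrm{relint}(C) \cap M) \subseteq \Lambda$. The inclusion $\omega_{R_1}^{-1} \# \cdots \# \omega_{R_s}^{-1} \subseteq \omega_R^{-1}$ follows directly from this description together with the Segre identity for $\omega_R$ already proved. For the reverse inclusion, I would take a homogeneous element $t^u \in \omega_R^{-1}$ with $u = (u_1,\ldots,u_s)$ and show each component $u_i$ belongs to $\omega_{R_i}^{-1}$: given an arbitrary interior lattice point $v_i$ of $C_i$, the task is to construct matching interior lattice points $v_j \in C_j$ for $j \neq i$ of the same degree so that $(v_1,\ldots,v_s) \in \mathrm{relint}(C) \cap M$, after which $u + (v_1,\ldots,v_s) \in \Lambda$ immediately yields $u_i + v_i \in \Lambda_i$.

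The main obstacle is this interior-lifting step: producing interior lattice points of matching degree across all factors, possibly after uniformly rescaling by a large positive integer (which is harmless since $\omega_R^{-1}$ is graded). The hypothesis $\dim R_i \geq 2$ is exactly what makes such lifting possible, because a one-dimensional cone $C_i$ is a single ray whose degree-slices are discrete singletons and potentially incompatible across factors --- an obstruction parallel to the well-known restriction on when Segre products inherit the Cohen--Macaulay property.
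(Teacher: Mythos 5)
This proposition is cited from \cite{herzog2019measuring}, so the paper itself provides no proof to compare against; I will assess your argument on its own merits. Your reduction to the affine semigroup picture via the Danilov--Stanley description of $\omega$, together with $\omega^{-1} = \setcond{u}{u + (\mathrm{relint}(C) \cap M) \subseteq \Lambda}$, is a natural and valid strategy for toric rings. The canonical-module half is sound: $\mathrm{relint}(D) \cap L \neq \varnothing$ because every positive degree slice of $\mathrm{relint}(C_i)$ is nonempty, so the standard relative-interior formula gives $\mathrm{relint}(C) = \mathrm{relint}(D) \cap L$, and Danilov--Stanley translates this into $\omega_R = \omega_{R_1} \# \cdots \# \omega_{R_s}$ degree by degree.

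The anti-canonical half, however, has a real gap. You aim to show $u_i + v_i \in \Lambda_i$ for \emph{every} $v_i \in \mathrm{relint}(C_i) \cap M_i$ by producing a matching tuple $(v_1,\ldots,v_s)$; but if $\deg v_i$ is smaller than the codegree of some $P_j$, no matching $v_j$ exists, and the proposed ``uniform rescaling'' does not repair this: from $u_i + N v_i \in \Lambda_i$ for large $N$ one cannot recover $u_i + v_i \in \Lambda_i$, since $\Lambda_i - N v_i$ degenerates as $N\to\infty$. The correct route is to verify the facet inequalities $n_F(u_i) \ge -1$ directly. For each facet $F$ of $C_i$, choose $v_i \in \mathrm{relint}(C_i) \cap M_i$ with $n_F(v_i)=1$ and $\deg v_i$ large --- possible precisely because $\dim F = \dim C_i - 1 \ge 1$, so $F \cap M_i$ contains lattice points of every positive degree which can be added to a fixed base point on the slice $\set{n_F = 1}$ --- and match each other factor $v_j$ at the same large degree. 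Then $u + (v_1,\dots,v_s) \in \Lambda$ yields $n_F(u_i) \ge -n_F(v_i) = -1$, and running over all facets gives $u_i \in \ant(C_i) \cap M_i$, which spans $\omega_{R_i}^{-1}$ by the toric description. This is a sharper use of the hypothesis $\dim R_i \ge 2$ than your ``discrete degree slices'' heuristic: it is exactly what guarantees that every facet of $C_i$ survives as a facet of the Segre cone $C$. If some $\dim C_i = 1$, the facet of $C_i$ is the origin and disappears in $C$, so $\ant(C)$ loses a constraint and $\omega_R^{-1} \supsetneq \omega_{R_1}^{-1} \# \cdots \# \omega_{R_s}^{-1}$; for instance $R_1 = \kk[x]$, $R_2 = \kk[y,z]$, where the lattice point $(-2,-1,-1)$ lies in $\ant(C)$ but has first coordinate outside $\ant(C_1)$.
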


\begin{lemma}\label{miyashita}
    Let $R_1,\ldots,R_s$ be homogeneous normal affine semigroup rings over infinite field $\kk$ which have Krull dimension at least 2.
    Let $R=R_1\#\cdots\# R_s$ be the Segre products.
    Then the following are true:
    %Assume that $R=R_1\#\cdots\# R_s$ is Cohen-Macaulay.
    \begin{itemize}
        \item[(1)] If $R$ is nearly Gorenstein, then $R_i$ is nearly Gorenstein for all $i$.
        \item[(2)] If $R_i$ is level for all $i$, then $R$ is level.
    \end{itemize}
\end{lemma}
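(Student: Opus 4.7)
The plan is to combine Propositions \ref{prop:trace} and \ref{ABC} with the basic fact that multiplication in a Segre product is termwise: $(u_1 \# \cdots \# u_s)(v_1 \# \cdots \# v_s) = (u_1 v_1) \# \cdots \# (u_s v_s)$.

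For (1), Proposition \ref{ABC} gives $\omega_R = \omega_{R_1} \# \cdots \# \omega_{R_s}$ and $\omega_R^{-1} = \omega_{R_1}^{-1} \# \cdots \# \omega_{R_s}^{-1}$, whence by Proposition \ref{prop:trace}, $\tr(\omega_R) = \omega_R \cdot \omega_R^{-1}$. Expanding a product of pure Segre tensors componentwise yields, in each graded piece,
\[
    (\tr(\omega_R))_k \;\subseteq\; \bigotimes_i \tr(\omega_{R_i})_k \;=\; \big(\tr(\omega_{R_1}) \# \cdots \# \tr(\omega_{R_s})\big)_k.
\]
Now fix $i$ and an arbitrary nonzero $r_i \in (R_i)_1$, and pick a nonzero $r_j \in (R_j)_1$ for each $j \neq i$ (possible since each $R_j$ is standard graded of Krull dimension at least $2$). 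The pure tensor $r_1 \# \cdots \# r_s \in \mm_R$ lies in $\tr(\omega_R)$ by hypothesis, hence in $\bigotimes_j \tr(\omega_{R_j})_1$. Since all factors are nonzero, the elementary fact that a pure tensor $v_1 \otimes \cdots \otimes v_s$ with each $v_j \neq 0$ belongs to $I_1 \otimes \cdots \otimes I_s$ (for $\kk$-subspaces $I_j \subseteq V_j$) iff each $v_j \in I_j$ then forces $r_i \in \tr(\omega_{R_i})$. As $r_i$ was arbitrary in $(R_i)_1$ and $R_i$ is standard graded, $\mm_{R_i} \subseteq \tr(\omega_{R_i})$.

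For (2), let $d_i \coloneqq -a(R_i)$; levelness of $R_i$ says $(R_i)_{n-d_i}(\omega_{R_i})_{d_i} = (\omega_{R_i})_n$ for $n \geq d_i$. Proposition \ref{ABC} gives $(\omega_R)_n = \bigotimes_i (\omega_{R_i})_n$, nonzero precisely when $n \geq d \coloneqq \max_i d_i$, so $a(R) = -d$. Using termwise multiplication, levelness (which yields $(\omega_{R_i})_d = (R_i)_{d-d_i}(\omega_{R_i})_{d_i}$), and standard gradedness (which yields $(R_i)_{n-d}(R_i)_{d-d_i} = (R_i)_{n-d_i}$),
\[
    (R)_{n-d} \cdot (\omega_R)_d \;=\; \bigotimes_i (R_i)_{n-d}(\omega_{R_i})_d \;=\; \bigotimes_i (R_i)_{n-d_i}(\omega_{R_i})_{d_i} \;=\; (\omega_R)_n
\]
for all $n \geq d$. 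Hence $\omega_R$ is generated in the single degree $d$, and $R$ is level.

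The main technical obstacle is the tensor-product argument in (1): \emph{a priori} the containment $\tr(\omega_R) \subseteq \tr(\omega_{R_1}) \# \cdots \# \tr(\omega_{R_s})$ can be strict, so one must use the specific pure-tensor form of $r_1 \# \cdots \# r_s$ to descend the conclusion to each factor. This reduces to the case $s = 2$ via $I_1 \otimes \cdots \otimes I_s \subseteq I_i \otimes \bigotimes_{j\neq i} V_j$, where it follows from flatness of vector spaces over $\kk$.
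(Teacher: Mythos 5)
Your proof is correct. In part (2) you essentially repackage the paper's argument: both show that $\omega_R\cong\omega_{R_1}\#\cdots\#\omega_{R_s}$ is generated in the single degree $d=\max_i(-a(R_i))$, the paper by exhibiting an explicit generating set (padding the lower-codegree factors with degree-one elements of the $R_i$), you by verifying $(R)_{n-d}(\omega_R)_d=(\omega_R)_n$ for all $n\geq d$. In part (1) the route is genuinely different. The paper reduces to $s=2$ and invokes \cite[Proposition 4.2]{miyashita2022levelness}, a toric-specific statement which directly furnishes a factorization of the monomial $\bfx_i\#\bfy_1\in\tr(\omega_R)$ as a single product of pure tensors $(\vb_1\#\vb_2)(\ub_1\#\ub_2)\in\omega_R\cdot\omega_R^{-1}$, from which $\bfx_i=\vb_1\ub_1\in\tr(\omega_{R_1})$ falls out immediately. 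You instead derive from Propositions~\ref{prop:trace} and~\ref{ABC} the graded containment $\tr(\omega_R)_k\subseteq\tr(\omega_{R_1})_k\otimes\cdots\otimes\tr(\omega_{R_s})_k$ and close with the elementary fact that a nonzero pure tensor lies in $I_1\otimes\cdots\otimes I_s$ only if every component lies in the corresponding $I_j$. This is self-contained (no external citation beyond Propositions~\ref{prop:trace} and~\ref{ABC}) and does not otherwise rely on the monomial structure of the trace ideal; the cost, as you correctly flag, is that the containment may be strict, so the argument must be fed a pure tensor $r_1\#\cdots\#r_s\in\mm_R$ with all components nonzero rather than a general homogeneous element --- which is exactly what you do, and which suffices because each $R_i$ is standard graded, so its maximal ideal is generated by $(R_i)_1$.
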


\begin{proof}
    It suffices to prove the case $s=2$.
    Let $\bfx_1, \ldots, \bfx_n$ be $\kk$-basis of $(R_1)_1$ and $\bfy_1,\ldots, \bfy_m$ be a $\kk$-basis of$(R_2)_1$.

    (1): In this case, by using Proposition~\ref{ABC}, we get $\omega_{R} \cong \omega_{R_1}\#\omega_{R_2}$ and $\omega_{R}^{-1} \cong \omega_{R_1}^{-1}\#\omega_{R_2}^{-1}$.
    Then we may identify
    $\omega_{R}$ and ${\omega_R}^{-1}$
    with $\omega_{R_1}\#\omega_{R_2}$
    and $\omega_{R_1}^{-1}\#\omega_{R_2}^{-1}$,
    respectively.

    It is enough to show that $\bfx_i \in \tr(\omega_{R_1})$ for any $1 \leq i \leq n$.
    Since $R$ is nearly Gorenstein, there exist homogeneous elements $\vb_1\#\vb_2 \in \omega_{R_1}\#\omega_{R_2}$ and $\ub_1\#\ub_2 \in \omega_{R_1}^{-1}\#\omega_{R_2}^{-1}$ such that $\bfx_i \# \bfy_1=(\vb_1\#\vb_2)(\ub_1\#\ub_2) = (\vb_1\ub_1\#\vb_2\ub_2)$, by \cite[Proposition 4.2]{miyashita2022levelness}.
    Thus, we get $\bfx_i=\vb_1\ub_1 \in \tr(\omega_{R_1})$,
    so $R_1$ is nearly Gorenstein.
    In the same way as above, we can show that $R_2$ is also nearly Gorenstein.

    (2): First, $\omega_R \cong \omega_{R_1}\# \omega_{R_2}$ by Proposition~\ref{ABC}.
    Let $a_1$ and $a_2$ be the $a$-invariants of $R_1$ and $R_2$, respectively, and assume that $a_1 \leq a_2$.
        Since $R_1$ and $R_2$ are level,
        $\omega_{R_1} \cong \langle f_1,\cdots,f_r \rangle R_1$
        and $\omega_{R_2} \cong \langle g_1,\cdots,g_l \rangle R_2$
        where $\deg f_i=-a_1$ and
        $\deg g_j=-a_2$ for all $1 \leq i \leq r$, $1 \leq j \leq l$.
        Thus, since $\omega_R \cong \omega_{R_1}\# \omega_{R_2}$,
        we may identify
        $\omega_R$ with $\langle f_1,\cdots,f_r \rangle R_1\# \langle g_1,\cdots,g_l \rangle R_2$.
    We set
    \[
        V := \setcond{\bfy^{\bfb}g_j}{1 \leq j \leq l,\, \bfa \in \NN^m,\, \; \sum_{i=1}^m b_i = a_2 - a_1 },
    \]
    where
    $\bfy^{\bfa}:=\bfy_1^{a_1}\cdots\bfy_m^{a_m}$.
    Then $\omega_R = \langle f_i\#v\, : \, 1\leq i \leq r, v \in V \rangle R .$
    Therefore, $R$ is level.
\end{proof}

\subsection{Lattice polytopes and Ehrhart rings}\label{subsec_lattice}
%Let $M \isom \ZZ^d$ be a lattice of rank $d$ and $N \coloneqq \Hom(M, \ZZ)$ be its dual lattice.
%We denote the natural pairing between elements $n \in N$ and $m \in M$ as $n(m)$.
%We also use the notation $\MQ \coloneqq M \otimes_\ZZ \QQ$ and $\NQ \coloneqq N \otimes_\ZZ \QQ$.
%Throughout this note, our polytopes are always taken to be convex.

% We denote the dual space of $\RR^d$ is denoted by $(\RR^d)^* \coloneqq \Hom_{\RR}(\RR^d, \RR)$.
% We similarly denote the dual space of $\ZZ^d$ by $(\ZZ^d)^*$.
We denote the natural pairing between an element $n \in (\RR^d)^*$ and an element $x \in \RR^d$ by $n(x)$.
Throughout this subsection, let $P \subset \RR^d$ be a lattice polytope, $\facets(P)$ be the set of facets of $P$, and $\verto(P)$ be the set of vertices of $P$.
Moreover, recall that we always assume $P$ is full-dimensional and has the facet presentation
\begin{align}\label{facet_pre}
    P = \setcond{x\in \RR^d}{n_F(x) \ge -h_F \text{ for all $F\in \facets(P)$}},
\end{align}
where each height $h_F$ is an integer and each inner normal vector $n_F \in (\ZZ^d)^*$ is a \emph{primitive} lattice point, i.e. a lattice point such that the greatest common divisor of its coordinates is $1$.

Let $C_P$ be the \textit{cone over $P$}, that is,
\[
    C_P = \RR_{\ge 0} (P \times \set{1}) = \setcond{(x,k) \in \RR^{d+1}}{n_F(x) \ge -k h_F \text{ for all $F\in \facets(P)$}}.
\]

We define the \textit{Ehrhart ring} of $P$ as
\[
    A(P) = \kk[C_P \cap \ZZ^{d+1}] = \kk[\tb^x s^k : k \in \NN \text{ and } x \in kP \cap \ZZ^d],
\]
where $\tb^x = t_1^{x_1} \cdots t_d^{x_d}$ and $x = (x_1, \ldots, x_d) \in kP \cap \ZZ^d$.
Note that the Ehrhart ring of $P$ is a normal affine semigroup ring, and hence it is Cohen-Macaulay.
Moreover, we can regard $A(P)$ as an $\NN$-graded $\kk$-algebra by setting $\deg(\tb^x s^k) = k$ for each $x \in kP \cap \ZZ^d$.

We also define another affine semigroup ring, the \textit{toric ring} of $P$, as
\[
    \kk[P] = \kk[\tb^x s : x \in P \cap \ZZ^d].
\]

\noindent The toric ring of $P$ is a standard $\NN$-graded $\kk$-algebra. %$\kk[t_1^{\pm 1},\ldots,t_d^{\pm 1},t_{d+1}]$ by setting $\deg(\tb^{\alpha}t_{d+1})=1$ for each $\alpha \in P \cap \ZZ^d$.

It is known that $\kk[P]=A(P)$ if and only if $P$ has the integer decomposition property.
Here, we say that $P$ has the \emph{integer decomposition property} (i.e. $P$ is \emph{IDP}) if for all positive integers $k$ and all $x \in kP \cap \ZZ^d$, there exist $y_1, \ldots, y_k \in P \cap \ZZ^d$ such that $x = y_1 + \cdots + y_k$.
%From now, we always assume $P$ has the integer decomposition property
%and put $R=A[P]$.

In order to describe the canonical module and the anti-canonical module of $A(P)$ in terms of $P$, we prepare some notation.

For a polytope or cone $K$, we denote the strict interior of $\sigma$ by $\into(\sigma)$.
Note that
\[
    \into(C_P) = \setcond{(x,k) \in \RR^{d+1}}{n_F(x) > -k h_F \text{ for all $F\in \facets(P)$}}.
\]
Moreover, we define
\[
    \ant(C_P) := \setcond{(x,k) \in \RR^{d+1}}{ n_F(x) \ge -k h_F -1 \text{ for all $F\in \facets(P)$}}.
\]
Then the following is true.

\begin{proposition}[see {\cite[Proposition 4.1 and Corollary 4.2]{herzog2019measuring}}]\label{prop:can_antican}
    The canonical module of $A(P)$ and the anti-canonical module of $A(P)$ are given by the following, respectively:
    \[
        \omega_{A(P)} = \left\langle \tb^x s^k : (x,k) \in \into(C_P) \cap \ZZ^{d+1} \right\rangle \text{ and }  \omega_{A(P)}^{-1} = \left\langle \tb^x s^k : (x,k) \in \ant(C_P) \cap \ZZ^{d+1} \right\rangle.
    \]
    Further, the negated $a$-invariant of $A(P)$ coincides with the codegree of $P$, i.e.
    \[
        a(A(P)) = -\min\setcond{k \in \ZZ_{\ge1}}{\strint(kP) \cap \ZZ^d \neq \varnothing}.
    \]
\end{proposition}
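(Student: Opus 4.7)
The plan is to assemble the statement from three standard pieces: the Danilov--Stanley description of the canonical module of a normal affine semigroup ring, a monomial-level computation of the anti-canonical module via Proposition~\ref{prop:trace}, and a direct reading of the $a$-invariant from the degree of the lowest nonzero graded piece of $\omega_{A(P)}$.

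First, I would invoke the Danilov--Stanley theorem: for a normal affine semigroup ring $\kk[S]$ whose semigroup $S$ generates the rational cone $C$, the canonical module is the ideal spanned by the monomials $\tb^{v}$ with $v\in\strint(C)\cap S$. Applied to $S=C_P\cap \ZZ^{d+1}$ and $C=C_P$, this gives the first equality of the proposition. Since by definition
\[
    \strint(C_P)\cap \ZZ^{d+1}=\setcond{(x,k)\in \ZZ^{d+1}}{n_F(x)>-kh_F \text{ for all }F\in\facets(P)},
\]
the description of $\omega_{A(P)}$ is immediate.

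For $\omega_{A(P)}^{-1}$, I would use Proposition~\ref{prop:trace}, which lets us identify $\omega_{A(P)}^{-1}$ with
\[
    \setcond{r\in Q(A(P))}{r\cdot \omega_{A(P)}\subseteq A(P)}.
\]
Because $A(P)$ is a normal toric ring, one checks this containment monomial by monomial: $\tb^{x}s^{k}$ lies in $\omega_{A(P)}^{-1}$ if and only if, for every $(y,\ell)\in\strint(C_P)\cap \ZZ^{d+1}$, we have $(x+y,k+\ell)\in C_P$, i.e.\ $n_F(x)+kh_F \ge -\bigl(n_F(y)+\ell h_F\bigr)$ for all $F\in\facets(P)$. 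Hence $(x,k)$ belongs to $\ant(C_P)$ iff
\[
    n_F(x)+kh_F\;\ge\;-\inf\setcond{n_F(y)+\ell h_F}{(y,\ell)\in\strint(C_P)\cap \ZZ^{d+1}}
\]
for every facet $F$. The key fact I need is that this infimum equals $1$ for each $F$: the defining inequality of $\strint(C_P)$ at $F$ is $n_F(y)+\ell h_F\ge 1$ over $\ZZ^{d+1}$, and since $C_P$ is a full-dimensional rational cone one can build an interior integer point arbitrarily close to the relative interior of any facet, so the value $1$ is actually attained for some sufficiently large height $\ell$. Granting this, the condition collapses exactly to $n_F(x)\ge -kh_F-1$ for every $F$, which is the defining system of $\ant(C_P)$.

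Finally, from the description of $\omega_{A(P)}$ the graded piece $(\omega_{A(P)})_i$ has $\kk$-basis given by the monomials $\tb^{x}s^{i}$ with $x\in \strint(iP)\cap \ZZ^{d}$. Therefore $(\omega_{A(P)})_i\neq 0$ precisely when $\strint(iP)\cap \ZZ^{d}\neq\varnothing$, and the formula $a(A(P))=-\min\setcond{k\in\ZZ_{\ge 1}}{\strint(kP)\cap \ZZ^{d}\neq\varnothing}=-a_P$ follows immediately. The main obstacle is the attainability claim for the infimum in the anti-canonical step; everywhere else the argument is a direct unpacking of definitions.
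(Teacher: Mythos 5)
The paper itself offers no proof of this proposition; it is cited from \cite[Proposition 4.1 and Corollary 4.2]{herzog2019measuring}. Your argument is the standard route — Danilov--Stanley for $\omega_{A(P)}$, a monomial computation for $\omega_{A(P)}^{-1}$, and reading $a(A(P))$ off the bottom degree of $\omega_{A(P)}$ — so the comparison is really to the cited literature rather than to the paper, and on that basis the approach is correct. The reduction to monomials is also legitimate: $\omega_{A(P)}^{-1}$ is a $\ZZ^{d+1}$-graded fractional ideal of a normal toric ring, so it is spanned by the Laurent monomials it contains.

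You rightly flag the one nontrivial step, namely that for each facet $F$,
\[
    \inf\setcond{n_F(y)+\ell h_F}{(y,\ell)\in\strint(C_P)\cap\ZZ^{d+1}}=1,
\]
but the justification as written is loose. Here is how to make it precise. Since $n_F$ is primitive, the covector $(n_F,h_F)\in(\ZZ^{d+1})^*$ is primitive, so the lattice hyperplane $\setcond{(y,\ell)\in\ZZ^{d+1}}{n_F(y)+\ell h_F=1}$ is nonempty; pick $(y_0,\ell_0)$ in it. Let $\widetilde F$ be the facet of $C_P$ supported on $n_F(y)+\ell h_F=0$; it is a $d$-dimensional rational cone, hence its relative interior contains a lattice point $(z,m)$, which satisfies $n_F(z)+mh_F=0$ and $n_{F'}(z)+mh_{F'}>0$ for all $F'\neq F$. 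Then for all sufficiently large $N$, the lattice point $(y_0,\ell_0)+N(z,m)$ lies in $\strint(C_P)$ and has $n_F$-value exactly $1$, proving attainability. With this inserted, the rest of your proof goes through as written.
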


Let $A$ and $B$ be subsets of $\RR^d$.
Their \emph{Minkowski sum} is defined as
\[
    A + B \coloneqq \setcond{x + y}{x \in A, y \in B}.
\]
%For a vector $b \in \RR^d$, instead of writing $A + \set{b}$, we put $A+b$.
% It is known that if A and B are polytopes, A+B is also a polytope.

We recall that the \emph{(direct) product} of two polytopes $P \subset \RR^d$ and $Q \subset \RR^e$ is denoted by $P \times Q \subset \RR^{d+e}$.
% It is known that the product of polytopes is also a polytope.

Note that we can regard $P \times Q$ as the Minkowski sum of polytopes, as follows.
Let
\[
    P' = \setcond{(p, \underbrace{0, \ldots, 0}_{e})\in \RR^{d+e}}{p \in P} \text{ and } Q'= \setcond{(\underbrace{0, \ldots, 0}_{d}, q) \in \RR^{d+e}}{q \in Q}.
\]
Then, we can see that $P \times Q = P' + Q'$.
Conversely, suppose two polytopes $P', Q' \subset \RR^d$ satisfy the following condition:
for all $i \in [d] := \set{1, \ldots, d}$, we have that $\pi_i(P') =\set{0}$ or $\pi_i(Q') =\set{0}$, where $\pi_i : \RR^d \to \RR$ is the projection onto the $i$-th coordinate.
Then we can regard $P' + Q'$ as the product of two polytopes.
Moreover, let $P$ and $Q$ be two lattice polytopes.
It is known that $\kk[P \times Q]$ is isomorphic to the Segre product $\kk[P] \# \kk[Q]$.

Finally, we recall the definitions of (polar) duality and reflexivity of polytopes.

\begin{definition}
    Let $P \subset \RR^d$ be a polytope.
    Its \emph{(polar) dual} is
    \[
        P^* \coloneqq \setcond{n \in (\RR^d)^*}{n(x) \ge -1 \text{ for all } x \in P}.
    \]
    We call $P$ \emph{reflexive} if both $P$ and $P^*$ are lattice polytopes (with respect to the lattices $\ZZ^d$ and $(\ZZ^d)^*$, respectively).
\end{definition}

\bigskip

\section{Nearly Gorensteinness of lattice polytopes}\label{sec:nearly}
Throughout this section, the lattice polytope $P$ has the facet presentation (\ref{facet_pre}).
\begin{definition}
    We say that $P$ is \emph{Gorenstein} (resp. \emph{nearly Gorenstein}) if the Ehrhart ring $A(P)$ is Gorenstein (resp. nearly Gorenstein).
\end{definition}

There are well-known equivalent conditions of Gorensteinness in terms of the lattice polytope $P$ itself.
For instance, $P$ is Gorenstein if and only if there exists a positive integer $a$ such that a \emph{lattice translation} of $aP$ is \emph{reflexive}, i.e. $aP$ has a unique interior lattice point which has lattice distance $1$ to all facets of $aP$.

% Reflexive polytopes themselves are extremely interesting objects which were originally introduced in the context of combinatorial mirror symmetry for Calabi-Yau varieties by Batyrev [cite exact paper].

In this section, we will determine a necessary condition for $P$ to be nearly Gorenstein, in terms of the polytope $P$ itself.
This condition demands that $P$ has a particular Minkowski decomposition.
By taking a dual perspective, we see exactly the connection to reflexive polytopes.
Next, we will show that if $P$ satisfies the aforementioned necessary condition and is in some sense ``big enough'', then $P$ will be nearly Gorenstein.
We end the section by investigating the nearly Gorensteinness of Minkowski indecomposable lattice polytopes.
\bigskip

\subsection{Necessary conditions}\label{subsec:nec}
The main aim of this subsection is to show the first half of Theorem~\ref{thm:nec_suf_intro}.
Before we proceed, let us first introduce some helpful notation.
For a subset $X$ of $\RR^{d+1}$ and $k \in \ZZ$, let $X_k = \setcond{x \in \RR^d}{(x,k) \in X}$ be the $k$-th \emph{piece} of $X$.
Note the subtlety in our notation: while $X$ is a subset of $\RR^{d+1}$, its $k$-th piece $X_k$ is a subset of $\RR^d$.
Moreover, for a lattice polytope $P$, we denote its \emph{codegree} by $a_P$ -- see below Proposition~\ref{prop:can_antican} for the definition.
When it is clear from context, we simply write $a$ instead of $a_P$.

\begin{proposition}\label{prop:nearly}
    Let $P \subset \RR^d$ be a lattice polytope.
    Then $P$ is nearly Gorenstein if and only if
    \begin{equation}\label{eq:nG_cones}
        (C_P \cap \ZZ^{d+1}) \setminus \set{0} \subseteq \strint(C_P) \cap \ZZ^{d+1} + \ant(C_P) \cap \ZZ^{d+1}.
    \end{equation}
    In particular, if $P$ is nearly Gorenstein, then
    \begin{equation}\label{eq:nG_IDP}
        P \cap \ZZ^d = \strint(C_P)_a \cap \ZZ^d + \ant(C_P)_{1-a} \cap \ZZ^d.
    \end{equation}
    The converse also holds if $P$ is IDP.
\end{proposition}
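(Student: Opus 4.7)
The plan is to prove the three assertions---the iff, the ``in particular'' implication, and the IDP converse---in sequence, using the trace identity $\tr(\omega_R) = \omega_R \cdot \omega_R^{-1}$ from Proposition~\ref{prop:trace} together with the monomial descriptions of $\omega_R$ and $\omega_R^{-1}$ from Proposition~\ref{prop:can_antican}. Since $R = A(P)$ is a semigroup ring and the fractional ideals $\omega_R, \omega_R^{-1}$ are monomial, $\tr(\omega_R)$ is a monomial ideal of $R$. Using that $\strint(C_P) + (C_P \cap \ZZ^{d+1}) \subseteq \strint(C_P)$ and $\ant(C_P) + (C_P \cap \ZZ^{d+1}) \subseteq \ant(C_P)$, the ring factor in any product can be absorbed into either side, so the monomial basis of $\tr(\omega_R)$ is exactly $\strint(C_P) \cap \ZZ^{d+1} + \ant(C_P) \cap \ZZ^{d+1}$. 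Since the monomial basis of $\mm$ is $(C_P \cap \ZZ^{d+1}) \setminus \{0\}$, the equivalence $\mm \subseteq \tr(\omega_R) \Leftrightarrow (\ref{eq:nG_cones})$ follows immediately.

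For the ``in particular'' implication, the inclusion $\supseteq$ in (\ref{eq:nG_IDP}) is direct: summing $n_F(y) \ge 1 - ah_F$ and $n_F(z) \ge (a-1)h_F - 1$ yields $n_F(y+z) \ge -h_F$, so $y + z \in P$. For the reverse inclusion, apply (\ref{eq:nG_cones}) to $(x,1)$ for each $x \in P \cap \ZZ^d$ to get a decomposition $(x,1) = (y,j) + (z,l)$ with $j + l = 1$ and $j \ge a$. To push the decomposition down to $j = a$, we perform a transfer: replace $(y,j) + (z,l)$ by $(y - u, j - 1) + (z + u, l + 1)$ for a lattice point $u$. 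The two cone memberships translate into per-facet bounds on $n_F(u)$ whose combined width is $n_F(x) + h_F \ge 0$ (since $x \in P$); one then selects $u$ (for instance in $P \cap \ZZ^d$) meeting all bounds simultaneously, and iterating reduces $j$ to $a$, $l$ to $1 - a$.

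For the IDP converse, take $(x, k) \in C_P \cap \ZZ^{d+1}$ with $k \ge 1$. IDP yields $x = x_1 + \cdots + x_k$ with each $x_i \in P \cap \ZZ^d$, and (\ref{eq:nG_IDP}) gives $x_1 = y_1 + z_1$ with $y_1 \in \strint(C_P)_a \cap \ZZ^d$ and $z_1 \in \ant(C_P)_{1-a} \cap \ZZ^d$. Setting $y := y_1 + x_2 + \cdots + x_k$, the estimate $n_F(y) \ge (1 - ah_F) + (k-1)(-h_F) = 1 - (a+k-1)h_F$ shows $(y, a+k-1) \in \strint(C_P)$, while $(z_1, 1 - a) \in \ant(C_P)$ holds by construction. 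Then $(x, k) = (y, a+k-1) + (z_1, 1-a)$ is a valid decomposition as in (\ref{eq:nG_cones}), so $P$ is nG by the first part.

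The main obstacle is the height reduction in the forward ``in particular'' step. The transfer vector $u$ must lie in the intersection of one range per facet of $P$, and existence of an integer $u$ in this intersection does not follow formally from the widths being non-negative. I expect this is where the finer lattice geometry of the cones at the codegree---closely tied to the Minkowski decomposition $P = \floor{aP} + \set{P}$ proved in Theorem~\ref{thm:nec_suf_intro}---must be brought to bear, and is the step requiring the most care.
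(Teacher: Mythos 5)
Parts (1) and (3) of your argument match the paper's (your part (3) is arguably cleaner, since you decompose only one $x_i$ and absorb the rest into the interior-cone factor, which sidesteps the fact that $\ant(C_P)$ is not closed under addition). The gap you flag in part (2) is genuine and the transfer idea does not repair it: even if every per-facet interval for $n_F(u)$ has positive width, the simultaneous feasible region is a polytope with normals $\pm n_F$ that can easily be lattice-free, and the suggested choice $u \in P \cap \ZZ^d$ only guarantees the lower bounds, not the upper ones. Reaching for the Minkowski decomposition $P = \floor{aP} + \set{P}$ of Theorem~\ref{thm:nec_suf_intro} would also be circular, since that is Proposition~\ref{prop:dec}, which is \emph{derived from} the proposition you are proving.

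The paper's proof needs no transfer because the heights are already forced. Slicing~(\ref{eq:nG_cones}) at height $1$, a decomposition $(x,1) = (y,j) + (z,1-j)$ automatically has $j \ge a$ by definition of codegree. And if $j \ge a+1$ with $\ant(C_P)_{1-j} \cap \ZZ^d \ne \varnothing$, pick $z$ there and $y' \in \strint(C_P)_{j-1} \cap \ZZ^d$ (nonempty since $j-1 \ge a$); then $n_F(y'+z) \ge \bigl(1-(j-1)h_F\bigr) + \bigl((j-1)h_F - 1\bigr) = 0$ for every $F$ at height $0$, so $y'+z = 0$, whence equality holds in every facet inequality and $(j-1)P$ is a lattice translate of a reflexive polytope. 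This forces $j-1 = a$ and $P$ Gorenstein. So in the non-Gorenstein case only $j = a$ contributes, giving~(\ref{eq:nG_IDP}); in the Gorenstein case one checks~(\ref{eq:nG_IDP}) directly via $x = p + (x-p)$ where $\strint(C_P)_a \cap \ZZ^d = \{p\}$.
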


\begin{proof}
    By definition, $P$ is nearly Gorenstein if and only if the trace $\trace(\omega)$ of the canonical ideal $\omega$ of $A(P)$ contains the maximal ideal $\mm$ of $A(P)$.
    By Proposition~\ref{prop:trace}, this trace is exactly the product $\omega_{A(P)} \cdot \omega_{A(P)}^{-1}$.
    Then, Proposition~\ref{prop:can_antican} tells us the monomial generators of $\omega$ and $\omega^{-1}$ in terms of the lattice points of $\strint(C_P)$ and $\ant(C_P)$.
    We finally note that the maximal ideal $\mm$ can be generated by the monomials $\tb^x s^k$, where $(x,k)$ are lattice points in $C_P \setminus \set{0}$.
    From this, it is clear to see that $P$ is nearly Gorenstein if and only if (\ref{eq:nG_cones}) holds.

    We next prove that (\ref{eq:nG_IDP}) follows from nearly Gorensteinness of $P$.
    First, note that the right hand side of (\ref{eq:nG_cones}) is contained in $C_P \cap \ZZ^{d+1}$ by definition.
    Therefore, when we take the $1$-st piece of all three sets, we obtain the equality
    \[
        P \cap \ZZ^d = (\strint(C_P) \cap \ZZ^{d+1} + \ant(C_P) \cap \ZZ^{d+1})_1.
    \]
    Note that when $P$ is Gorenstein, $\strint(C_P)_a \cap \ZZ^d$ and $\ant(C_P)_{-a} \cap \ZZ^d$ are singleton sets; therefore, the result easily follows.
    Otherwise, we claim that $\ant(C_P)_{1-b} \cap \ZZ^d$ is empty for all $b \ge a+1$.
    Since $\strint(C_P)_b$ is empty for $b < a$, we obtain the desired result.

    Finally, we show that the converse holds when $P$ is IDP.
    Let $(x,k) \in C_P \cap \ZZ^d \setminus \set{0}$.
    Since $P$ is IDP, there are $x_1, \ldots, x_k \in P \cap \ZZ^d$ such that $(x,k) = (x_1,1) + \cdots + (x_k,1)$.
    Further, each $x_i \in P \cap \ZZ^d$ can be written as the sum of lattice points in $\strint(C_P)$ and $\ant(C_P)$.
    Therefore, (\ref{eq:nG_cones}) holds and so $P$ is nearly Gorenstein.
\end{proof}

\bigskip

%In what follows, $P$ is IDP.
\begin{definition}\label{def:floor_rem}
    Let $P \subset \RR^d$ be a lattice polytope.
    We define its \emph{floor polytope} and \emph{remainder polytopes} as
    \[
        \floor{P} := \conv(\strint(P) \cap \ZZ^d) \qquad \text{ and } \qquad \set{P} := \conv(\ant(C_P)_{1-a} \cap \ZZ^d),
    \]
    respectively.
    Note that $\floor{P}$ coincides with $\conv(\strint(C_P)_1 \cap \ZZ^d)$.
\end{definition}

We collate a couple of easy facts about these polytopes and reformulate part of Proposition~\ref{prop:nearly} into the following statement.
\begin{lemma}\label{lem:flo_rem}
    Let $P \subset \RR^d$ be a lattice polytope. Then:
    \begin{enumerate}
        \item $\floor{aP} \subseteq \setcond{x \in \RR^d}{n_F(x) \ge 1 - ah_F \text{ for all $F\in \facets(P)$}}$;
        \item $\set{P} \subseteq \setcond{x \in \RR^d}{n_F(x) \ge (a-1)h_F - 1 \text{ for all $F \in \facets(P)$}}$;
        % \item $\floor{aP} \cap \ZZ^d = \strint(C_P)_a \cap \ZZ^d$ and $\set{P} \cap \ZZ^d = \ant(C_P)_{1-a} \cap \ZZ^d$.%Since $\into(C_P)_a$ and $\ant(C_P)_{1-a}$ are convex sets. we have
        \item If $P$ is nearly Gorenstein, then $P \cap \ZZ^d = \floor{aP} \cap \ZZ^d + \set{P} \cap \ZZ^d$;
        \item If $P$ is IDP and $P \cap \ZZ^d = \floor{aP} \cap \ZZ^d + \set{P} \cap \ZZ^d$, then $P$ is nearly Gorenstein.
    \end{enumerate}
\end{lemma}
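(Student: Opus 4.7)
The four claims are essentially book-keeping: (1) and (2) are direct consequences of the definitions of $\floor{aP}$ and $\set{P}$, while (3) and (4) amount to rewriting Proposition~\ref{prop:nearly} once we identify the lattice points of $\floor{aP}$ and $\set{P}$ with those of $\strint(C_P)_a$ and $\ant(C_P)_{1-a}$, respectively.

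For (1), if $x \in \strint(aP) \cap \ZZ^d$ then $n_F(x) > -ah_F$ for every $F \in \facets(P)$. Since $n_F \in (\ZZ^d)^*$ and $h_F \in \ZZ$, both sides are integers and so $n_F(x) \ge 1 - ah_F$. These linear inequalities are preserved under convex combinations, so they hold throughout $\floor{aP} = \conv(\strint(aP) \cap \ZZ^d)$. Statement (2) is the same argument applied to the definition $\ant(C_P) = \{(x,k) : n_F(x) \ge -kh_F - 1\}$ evaluated at $k = 1-a$, which gives $n_F(x) \ge (a-1)h_F - 1$ for each lattice point defining $\set{P}$, and this inequality again extends by convexity.

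For (3), I plan to apply Proposition~\ref{prop:nearly}, which yields
\[
    P \cap \ZZ^d = \strint(C_P)_a \cap \ZZ^d + \ant(C_P)_{1-a} \cap \ZZ^d
\]
whenever $P$ is nearly Gorenstein. To finish, I need the two lattice-point identifications
\[
    \floor{aP} \cap \ZZ^d = \strint(C_P)_a \cap \ZZ^d \quad \text{and} \quad \set{P} \cap \ZZ^d = \ant(C_P)_{1-a} \cap \ZZ^d.
\]
The containments $\supseteq$ in each are immediate from the definitions. For the reverse containment, the slightly less routine point is to show that the convex hull introduces no new lattice points. For $\floor{aP}$, the key observation is that $\strint(aP)$ is convex (interiors of convex sets are convex), so $\floor{aP} = \conv(\strint(aP) \cap \ZZ^d) \subseteq \strint(aP)$, and consequently every lattice point of $\floor{aP}$ lies in $\strint(aP) \cap \ZZ^d$. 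The argument for $\set{P}$ is analogous: the set $\ant(C_P)_{1-a}$ is a convex polyhedron containing its own defining lattice points, hence contains $\set{P}$, and the lattice points of $\set{P}$ must therefore lie in $\ant(C_P)_{1-a} \cap \ZZ^d$.

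For (4), I plan to run the same two identifications in the opposite direction: assuming $P$ is IDP and that $P \cap \ZZ^d = \floor{aP} \cap \ZZ^d + \set{P} \cap \ZZ^d$, translating via the identifications above recovers equation~(\ref{eq:nG_IDP}), and then the IDP converse half of Proposition~\ref{prop:nearly} gives that $P$ is nearly Gorenstein. The only real content, shared between (3) and (4), is the convexity argument ensuring that the convex hulls $\floor{aP}$ and $\set{P}$ contain no unexpected lattice points; once that is in place, everything reduces to unwinding notation.
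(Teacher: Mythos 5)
Your proposal is correct and follows essentially the same route as the paper: verify (1) and (2) directly from the integrality of the defining inequalities and convexity, then identify $\floor{aP}\cap\ZZ^d$ with $\strint(C_P)_a\cap\ZZ^d$ and $\set{P}\cap\ZZ^d$ with $\ant(C_P)_{1-a}\cap\ZZ^d$, and substitute into Proposition~\ref{prop:nearly}. You merely spell out the convexity justification (that $\conv(S\cap\ZZ^d)\subseteq S$ when $S$ is convex, so the convex hull adds no new lattice points) which the paper treats as immediate.
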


\begin{proof}
    Statements (1) and (2) follow immediately from the definition of the floor and remainder polytope.
    To prove statements (3) and (4), notice that the lattice points of $\strint(C_P)_a$ coincide with those of $\floor{aP}$ and the lattice points of $\ant(C_P)_{1-a}$ coincide with those of $\set{P}$.
    Then simply substitute this into Proposition~\ref{prop:nearly}.
\end{proof}

The following proposition is the first half of Theorem~\ref{thm:nec_suf_intro}:

\begin{proposition}\label{prop:dec}
    If $P$ is nearly Gorenstein, then $P =\floor{aP} + \set{P}$.
\end{proposition}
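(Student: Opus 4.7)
The plan is to prove the two set-theoretic inclusions $\floor{aP} + \set{P} \subseteq P$ and $P \subseteq \floor{aP} + \set{P}$ separately, leveraging parts (1), (2), and (3) of Lemma~\ref{lem:flo_rem} together with the fact that $P$, being a lattice polytope, equals the convex hull of its lattice points.

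First I would establish the easier inclusion $\floor{aP} + \set{P} \subseteq P$. For any $x \in \floor{aP}$ and $y \in \set{P}$, Lemma~\ref{lem:flo_rem}(1) gives $n_F(x) \ge 1 - a h_F$ for every $F \in \facets(P)$, and Lemma~\ref{lem:flo_rem}(2) gives $n_F(y) \ge (a-1)h_F - 1$. Summing these yields $n_F(x+y) \ge -h_F$ for every facet $F$, so $x + y \in P$ by the facet presentation (\ref{facet_pre}). This inclusion does not require the nearly Gorenstein hypothesis.

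For the reverse inclusion, I would invoke nearly Gorensteinness via Lemma~\ref{lem:flo_rem}(3), which gives the lattice-point identity $P \cap \ZZ^d = \floor{aP} \cap \ZZ^d + \set{P} \cap \ZZ^d$. Since $\floor{aP}$ and $\set{P}$ are convex sets containing their respective lattice points, we get $P \cap \ZZ^d \subseteq \floor{aP} + \set{P}$. The Minkowski sum of two convex sets is convex, so taking the convex hull on both sides and using that $P = \conv(P \cap \ZZ^d)$ (a standard property of lattice polytopes) yields $P \subseteq \floor{aP} + \set{P}$.

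There is no serious obstacle here: the proposition is essentially a polytopal repackaging of the lattice-point decomposition coming from Proposition~\ref{prop:nearly}, and the only mild subtlety is the step from equality of lattice points to equality of polytopes, which is handled by the observation that both $\floor{aP} + \set{P}$ and $P$ are convex and $P$ has no lattice points outside its defining facet presentation. I would also remark in passing that, as a by-product of the first inclusion, we automatically obtain $\floor{aP} \subseteq \setcond{x}{n_F(x) \ge 1 - ah_F}$ and $\set{P} \subseteq \setcond{x}{n_F(x) \ge (a-1)h_F - 1}$ as polytope inclusions; this foreshadows Theorem~\ref{thm:conj1_intro}, where equality will be shown under the same hypothesis.
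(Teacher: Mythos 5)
Your proof is correct and follows essentially the same route as the paper: the forward inclusion via Lemma~\ref{lem:flo_rem}(1) and (2), and the reverse inclusion via Lemma~\ref{lem:flo_rem}(3) (the paper decomposes only the vertices of $P$ rather than all lattice points and then invokes convexity, but this is the same idea). One small caution about your closing aside: the inclusions $\floor{aP} \subseteq \setcond{x}{n_F(x) \ge 1 - ah_F}$ and $\set{P} \subseteq \setcond{x}{n_F(x) \ge (a-1)h_F - 1}$ are not a ``by-product'' of the first inclusion --- they are exactly Lemma~\ref{lem:flo_rem}(1) and (2), i.e.\ the hypotheses you used to prove it, so presenting them as a consequence is circular; what Theorem~\ref{thm:conj1_intro} actually adds is the reverse containments.
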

    %\noindent It follows from the above discussion and a property of Minkowski sum. Note that the converse does not hold.
\begin{proof}
    Let $x \in \floor{aP}$ and $y \in \set{P}$.
    By statements (1) and (2) of Lemma~\ref{lem:flo_rem}, we have that, for all facets $F$ of $P$, $n_F(x + y) \ge 1 - ah_F + (a-1)h_F - 1 = -h_F$.
    So, $x+y \in P$.
    Therefore, we obtain that $\floor{aP} + \set{P} \subseteq P$.

    On the other hand, let $v$ be a vertex of $P$.
    Since $P$ is a lattice polytope, $v \in P \cap \ZZ^d$.
    Thus, by statement (3) of Lemma~\ref{lem:flo_rem}, can write $v$ as
    the sum of an element of $\floor{aP} \cap \ZZ^d$ and an element of $\set{P}\cap \ZZ^d$.
    This implies $P \subseteq \floor{aP} + \set{P}$.
\end{proof}

\begin{example}\label{stopsign}
    Consider the stop sign polytope, given by
    \[
        P = \conv\set{(1,0), (2,0), (3,1), (3,2), (2,3), (1,3), (0,2), (0,1)}.
    \]

    \begin{figure}[ht]
        \centering
        \begin{tikzpicture}[scale=0.8]
            \begin{axis}[
                x=1cm, y=1cm,
                axis lines=left,
                ymajorgrids=true,
                xmajorgrids=true,
                xmin=-1,
                xmax=4,
                ymin=-1,
                ymax=4,
                xtick={0,1,...,4},
                ytick={0,1,...,4},]
                \clip (-0.8, -0.8) rectangle (4.3, 4.3);
                % Of course, the stop sign polytope must be red!!
                \fill[fill=red, fill opacity=0.1] (1,0) -- (2,0) -- (3,1) -- (3,2) -- (2,3) -- (1,3) -- (0,2) -- (0,1) -- cycle;
                \draw [line width=1.5pt, color=black] (1,0)-- (2,0);
                \draw [line width=1.5pt, color=black] (2,0)-- (3,1);
                \draw [line width=1.5pt, color=black] (3,1)-- (3,2);
                \draw [line width=1.5pt, color=black] (3,2)-- (2,3);
                \draw [line width=1.5pt, color=black] (2,3)-- (1,3);
                \draw [line width=1.5pt, color=black] (1,3)-- (0,2);
                \draw [line width=1.5pt, color=black] (0,2)-- (0,1);
                \draw [line width=1.5pt, color=black] (0,1)-- (1,0);
            \end{axis}
        \end{tikzpicture}
        \begin{tikzpicture}[scale=0.8]
            \begin{axis}[
                x=1cm,y=1cm,
                axis lines=left,
                ymajorgrids=true,
                xmajorgrids=true,
                xmin=0,
                xmax=3,
                ymin=0,
                ymax=3,
                xtick={1,2,3},
                ytick={1,2,3},]
                \clip(0,0) rectangle (3,3);
                \fill[fill=red, fill opacity=0.1] (1,1) -- (2,1) -- (2,2) -- (1,2) -- cycle;
                \draw [line width=1.5pt, color=black] (1,1)-- (2,1);
                \draw [line width=1.5pt, color=black] (2,1)-- (2,2);
                \draw [line width=1.5pt, color=black] (2,2)-- (1,2);
                \draw [line width=1.5pt, color=black] (1,2)-- (1,1);
            \end{axis}
        \end{tikzpicture}
        \begin{tikzpicture}[scale=0.8]
            \begin{axis}[
                x=1cm,y=1cm,
                axis lines=left,
                ymajorgrids=true,
                xmajorgrids=true,
                xmin=-2,
                xmax=2,
                ymin=-2,
                ymax=2,
                xtick={-1,0,1,2},
                ytick={-1,0,1,2},]
                \clip(-2,-2) rectangle (2,2);
                \fill[fill=red,fill opacity=0.1] (1,0) -- (0,1) -- (-1,0) -- (0,-1) -- cycle;
                \draw [line width=1.5pt,color=black] (1,0)-- (0,1);
                \draw [line width=1.5pt,color=black] (0,1)-- (-1,0);
                \draw [line width=1.5pt,color=black] (-1,0)-- (0,-1);
                \draw [line width=1.5pt,color=black] (0,-1)-- (1,0);
            \end{axis}
        \end{tikzpicture}

        \caption{The stop sign polytope $P$ (left) with its floor polytope $\floor{P}$ (middle) and remainder polytope $\set{P}$ (right).}
        \label{fig:stop_sign}
    \end{figure}
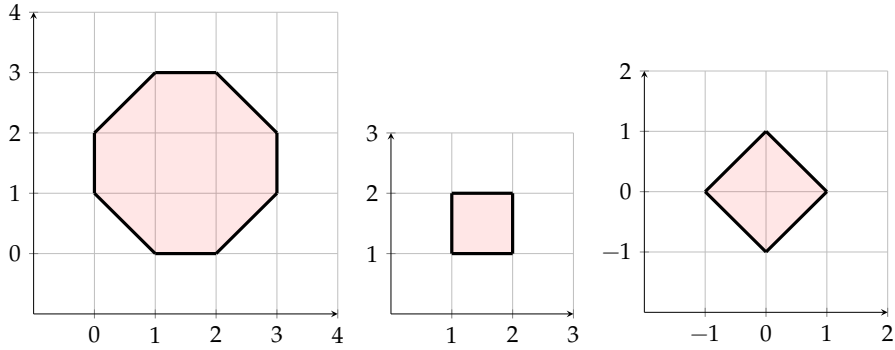

    First, we note that $a_P = 1$.
    Next, we may compute the floor and remainder polytopes:
    \[
        \floor{P} = \conv\set{(1,1), (2,1), (1,2), (2,2)} \quad \text{ and } \quad \set{P} = \conv\set{(1,0), (0,1), (-1,0), (0,-1)}.
    \]
    By taking the Minkowski sum of these polytopes, we see that $P$ satisfies the necessary condition to be Gorenstein given by Proposition~\ref{prop:dec}, i.e. $P = \floor{P} + \set{P}$.
    On the other hand, it is straightforward to verify that every lattice point of $P$ can be written as the sum of a lattice point of $\floor{P}$ and a lattice point of $\set{P}$.
    Since $P$ is IDP (as is true for all \emph{polygons}), statement (4) of Lemma~\ref{lem:flo_rem} informs us that $P$ is nearly Gorenstein.

    Finally, we remark that the remainder polytope $\set{P}$ is reflexive.
    This is not coincidence, as we will prove in Proposition~\ref{thm:conj1}.

    % We compute its canonical ideal, obtaining $\omega_{A(P)} = \langle t_1 t_2 t_3, t_1^2 t_2 t_3,
    % t_1 t_2^2 t_3, t_1^2 t_2^2 t_3 \rangle$.
    % So, $a(A(P)) = -1$, by Proposition \ref{prop:can_antican}.
    % Further, we can check that the monomials $t_1, t_2, t_1^{-1}, t_2^{-1}$ indeed lie in $\omega_{A(P)}^{-1}$.
    % Hence, $\omega_{A(P)} \omega_{A(P)}^{-1} = \mathbf{m}$, and so $P$ is nearly Gorenstein.
\end{example}

\subsection{A sufficient condition}\label{subsec:suf}
In this subsection, we will explore sufficient conditions for a lattice polytope to be nearly Gorenstein; in particular, we will prove the second half of Theorem~\ref{thm:nec_suf_intro}.

We first note that the converse of Proposition~\ref{prop:dec} does not hold in general.

\begin{example}[compare {\cite[Example 1.1]{mustata_payne2005}}]
    Let $f = \frac13(e_1 + \cdots + e_6) \in \RR^6$, where $e_1, \ldots, e_6$ is a basis of the lattice $\ZZ^6$.
    Define a new lattice $L \coloneqq \ZZ^6 + f \cdot \ZZ$, and consider the lattice polytope
    \[
        Q \coloneqq \conv\set{e_1, \ldots, e_6, e_1 - f, \ldots, e_6 - f}
    \]
    with respect to the lattice $L$.
    Set $P \coloneqq 2Q$.
    Since $\floor{P} = \set{P} = Q$, it's easy to see that $P = \floor{P} + \set{P}$, meeting the necessary condition of Proposition~\ref{prop:dec} for nearly Gorensteinness.

    On the other hand, $Q$ is not IDP.
    In particular, $2Q \cap L \neq (Q \cap L) + (Q \cap L)$.
    Thus, $P = 2Q$ fails the necessary condition of statement (3) in Lemma~\ref{lem:flo_rem}, and so $P$ is not nearly Gorenstein.
\end{example}

So, we need to make more assumptions about $P$ in order to be guaranteed nearly Gorensteinness.
This brings us to the following result, which is the second half of Theorem~\ref{thm:nec_suf_intro}:

\begin{theorem}\label{thm:big_enough_nG}
    Let $P \subset \RR^d$ be a lattice polytope satisfying $P = \floor{aP} + \set{P}$.
    Then there exists some integer $K \ge 1$ (depending on $P$) such that for all $k \ge K$, the polytope $kP$ is nearly Gorenstein.
\end{theorem}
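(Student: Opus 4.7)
The plan is to verify the sufficient condition of Lemma~\ref{lem:flo_rem}(4) for the dilate $kP$ when $k$ is large, namely that (i) $kP$ is IDP and (ii) the lattice-point equality $kP \cap \ZZ^d = \floor{a_{kP} \cdot kP} \cap \ZZ^d + \set{kP} \cap \ZZ^d$ holds. As a first reduction, for $k \geq a_P$ we have $a_{kP} = 1$, so $\floor{a_{kP} \cdot kP} = \floor{kP}$ and $\set{kP} = \conv(\Omega)$, where $\Omega \coloneqq \{x \in \ZZ^d : n_F(x) \geq -1 \text{ for all } F \in \facets(P)\}$ is a finite set that no longer depends on $k$.

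For condition (i), it is classical (going back to Bruns--Gubeladze) that every lattice polytope becomes IDP after sufficient dilation, so there is some $K_0$ with $kP$ IDP for all $k \geq K_0$. Condition (ii) is the heart of the argument. The inclusion $\floor{kP} \cap \ZZ^d + \Omega \subseteq kP \cap \ZZ^d$ is immediate from Lemma~\ref{lem:flo_rem}(1),(2). For the reverse, given $x \in kP \cap \ZZ^d$ I must produce $v \in \Omega$ with $n_F(v) \leq n_F(x) + kh_F - 1$ for every facet $F$. If $x$ is in the strict interior of $kP$, then $v = 0 \in \Omega$ works. Otherwise, writing $\sigma(x) \coloneqq \{F : n_F(x) = -kh_F\}$ for the facets on which $x$ is tight, the inequalities for $F \in \sigma(x)$ force $n_F(v) = -1$, whereas those for $F \notin \sigma(x)$ only demand $n_F(v) \leq n_F(x) + kh_F - 1$, which becomes automatic once $k$ is large enough that the right-hand side exceeds the fixed maximum of $n_F$ over $\Omega$.

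So the construction reduces to finding, for each face $G$ of $P$, a lattice point $v_G \in \Omega$ with $n_F(v_G) = -1$ for every $F \supseteq G$. I would use Theorem~\ref{thm:conj1} for this: pick any vertex $w$ of $G$ and use the decomposition $w = a_w + b_w$ with $a_w \in \floor{aP} \cap \ZZ^d$ and $b_w \in \set{P} \cap \ZZ^d$ vertices of the respective factors. Combining the facet presentations in Theorem~\ref{thm:conj1} with the identity $n_F(w) = -h_F$ for $F \ni w$ pins down $n_F(b_w) = (a-1)h_F - 1$ for all $F \ni w$. When $\floor{P} \neq \varnothing$ (equivalently $a = 1$), this gives $n_F(b_w) = -1$ and $b_w \in \set{P} = \conv(\Omega)$, so we may take $v_G \coloneqq b_w$ and the construction is complete.

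The main obstacle is the case $\floor{P} = \varnothing$, i.e.\ $a_P \geq 2$, where Theorem~\ref{thm:conj1} does not give reflexivity of $\set{P}$ and $b_w$ sits at height $(a-1)h_F - 1 \neq -1$. I would handle this by first checking from the hypothesis that the dilation $a_P \cdot P$ itself admits a Minkowski decomposition of the form $a_P P = \floor{a_P P} + \set{a_P P}$, so that Theorem~\ref{thm:conj1} now applies to $a_P P$: its remainder polytope is reflexive and has a unique interior lattice point $c$, from which the translated vertex decompositions $v_G \coloneqq b_w - c$ land in $\Omega$ and satisfy the required equalities. Once condition (ii) is established at one value $k_0$, it propagates to all larger $k$ by a short induction using IDP: if $x \in (k+1)P \cap \ZZ^d$ is written via IDP as $x = y + p$ with $y \in kP \cap \ZZ^d$ and $p \in P \cap \ZZ^d$, and $y = u + v$ with $u \in \floor{kP} \cap \ZZ^d$ and $v \in \Omega$ by induction, then $x = (u + p) + v$ has $u + p \in \floor{(k+1)P} \cap \ZZ^d$ by a direct facet check. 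Taking $K$ above both $K_0$ and the base value $k_0$ completes the proof.
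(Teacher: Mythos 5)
Your high-level plan matches the paper's: reduce to verifying the two hypotheses of Lemma~\ref{lem:flo_rem}(4) for $kP$, noting that $a_{kP}=1$ once $k\ge a$ so the remainder polytope stabilises, and using a dilation threshold for IDP. The paper establishes the lattice-point equality $kP\cap\ZZ^d=\floor{kP}\cap\ZZ^d+\set{kP}\cap\ZZ^d$ by an entirely different route: it applies Lemma~\ref{lem:kP}(2) to get $(k-a)P$ as a Minkowski summand of $\floor{kP}$ (so every edge has length $\ge k-a$), observes that $\Nc(\floor{kP})=\Nc(P)$ refines $\Nc(\set{kP})$, and invokes the Haase--Hofmann theorem (Theorem~\ref{thm:idp_pair}), which gives exactly the desired lattice-point additivity under those edge-length and normal-fan conditions. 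Your approach, by contrast, tries to construct the decomposition of each $x\in kP\cap\ZZ^d$ explicitly, and this is where a genuine gap appears.

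The flaw is in the sentence claiming that the constraints $n_F(v)\le n_F(x)+kh_F-1$ for $F\notin\sigma(x)$ ``become automatic once $k$ is large enough that the right-hand side exceeds the fixed maximum of $n_F$ over $\Omega$.'' The right-hand side $n_F(x)+kh_F-1$ depends on the specific lattice point $x$, not just on $k$: if $x$ lies near, but not on, the facet $F$ of $kP$, then $n_F(x)+kh_F$ can equal $1$ (or $2$, etc.) for arbitrarily large $k$. Concretely, with $P=[0,1]^2$ and $x=(1,k)$, the facet $\{x_1\ge 0\}$ is not in $\sigma(x)$ yet has $n_F(x)+kh_F=1$. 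Your reduction to ``find $v_G$ for each face $G$ with $n_F(v_G)=-1$ for $F\supseteq G$, where $G$ is determined by $\sigma(x)$'' therefore does not work: the point $v_{G}$ chosen from the tight facets alone may violate the near-but-not-tight inequalities. To repair this along your lines you would need to (a) replace $\sigma(x)$ by the larger set $\tau(x)=\{F:n_F(x)+kh_F\le M\}$ with $M=\max_F\max_\Omega n_F$, (b) prove that for $k$ large the facets in $\tau(x)$ always meet in a common face of $P$ (a quantitative separation/compactness argument about facets of $P$), and (c) show that the corresponding face of the reflexive polytope $\set{aP}$ contains a lattice point, which requires the normal-fan refinement relation that the paper develops anyway. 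The Haase--Hofmann theorem packages precisely steps (b) and (c) into a single clean criterion, which is why the paper's proof is short; as written, your construction omits step (b) entirely, and this is not a cosmetic omission but the substantive content of the uniformity claim.
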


In order to prove the above, we rely on a few key ingredients.
The first ingredient is an extension of known results from the reflexive case, which appear in \cite{hibi1992}.

\begin{lemma}\label{lem:kP}
    Let $P \subset \RR^d$ be a lattice polytope satisfying $P = \floor{aP} + \set{P}$.
    Then the following statements hold:
    \begin{enumerate}
        \item $kP = \floor{(k+a-1)P} + \set{P}$, for all $k \ge 1$;
        \item $\floor{k'P} = \floor{aP} + (k'-a)P$, for all $k' \ge a$.
    \end{enumerate}
\end{lemma}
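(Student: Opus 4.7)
The plan has three parts: deducing (1) from (2) algebraically, setting up induction for (2), and executing the inductive step.

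First, writing $F = \floor{aP}$ and $S = \set{P}$, statement (1) is a direct consequence of (2). Substituting (2) into the left-hand side of (1) gives
\[
    \floor{(k+a-1)P} + S = F + (k-1)P + S = (F + S) + (k-1)P = P + (k-1)P = kP,
\]
where the penultimate step uses the hypothesis $P = F + S$. Thus it suffices to prove (2), which I would do by induction on $k' \ge a$. The base case $k' = a$ is immediate since $F + 0\cdot P = F$. In the inductive step, using the inductive hypothesis $\floor{k'P} = F + (k'-a)P$, one rewrites $F + (k'+1-a)P = \floor{k'P} + P$, reducing the goal to proving $\floor{(k'+1)P} = \floor{k'P} + P$.

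The inclusion $\floor{k'P} + P \subseteq \floor{(k'+1)P}$ is elementary: $\floor{k'P} + P$ is a lattice polytope whose vertices are sums $y + z$ with $y \in \floor{k'P} \cap \ZZ^d$ and $z \in P \cap \ZZ^d$. Such a vertex satisfies $n_F(y+z) \ge (1 - k'h_F) + (-h_F) = 1 - (k'+1)h_F$ for every facet $F$ of $P$, so $y + z$ is a lattice point of $\strint((k'+1)P)$, and convexity extends the inclusion to the full polytope.

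For the reverse inclusion, my plan is a support-function argument leveraging the Minkowski equality $P = F + S$. For every facet normal $n_F$ of $P$, one has $\min_F n_F + \min_S n_F = \min_P n_F = -h_F$; combined with the bounds $\min_F n_F \ge 1 - ah_F$ and $\min_S n_F \ge (a-1)h_F - 1$ from Lemma~\ref{lem:flo_rem}, the two bounds must both be attained, so $\min_F n_F = 1 - ah_F$ and $\min_S n_F = (a-1)h_F - 1$. Moreover, since the normal fan of a Minkowski sum is the common refinement of those of its summands, the facet normals of both $F$ and $S$ are contained in $\facets(P)$. These observations let me compute $F + (k'+1-a)P$ facet-by-facet along $P$'s normal directions and identify it with the inner dilate $\{y \in \RR^d : n_F(y) \ge 1 - (k'+1)h_F \text{ for all } F \in \facets(P)\}$. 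Being a Minkowski sum of two lattice polytopes, this has lattice vertices and so equals the convex hull of its own lattice points, which are precisely $\strint((k'+1)P) \cap \ZZ^d$. Hence $F + (k'+1-a)P = \floor{(k'+1)P}$, closing the induction. I expect the main obstacle to be the support-function and normal-fan analysis verifying that $F + (k'+1-a)P$ genuinely inherits $P$'s normal fan with the claimed heights; this rigidity -- a non-trivial consequence of the Minkowski decomposition hypothesis -- is the crux and prefigures the sharper facet presentations developed later in the paper.
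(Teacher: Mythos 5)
Your proposal is correct, but it takes a genuinely different route from the paper's. The paper proves the reverse inclusion $\floor{(k+a-1)P} \subseteq (k-1)P + \floor{aP}$ not by direct computation, but by first establishing $\floor{(k+a-1)P} + \set{P} \subseteq kP$, substituting $kP = (k-1)P + \floor{aP} + \set{P}$ on the right, and then invoking the Minkowski cancellation law to strip off $\set{P}$; the other inclusion is a short facet-inequality check like yours. Your argument instead attacks the hard inclusion head-on via a support-function and normal-fan analysis: from the tightness of $\min_{\floor{aP}} n_F + \min_{\set{P}} n_F = -h_F$ against the two lower bounds of Lemma~\ref{lem:flo_rem}, you extract $\min_{\floor{aP}} n_F = 1 - ah_F$, and since $\Nc(P) = \Nc(\floor{aP}) \wedge \Nc(\set{P})$ refines $\Nc(\floor{aP})$, the normal fan of $\floor{aP} + mP$ (for $m \geq 1$) is exactly $\Nc(P)$, pinning down the facet presentation. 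This effectively derives the facet description of $\floor{aP}$ that the paper postpones to Theorem~\ref{thm:conj1} -- your proof is doing double duty. What the paper's route buys is brevity (cancellation replaces the normal-fan bookkeeping); what yours buys is that you avoid invoking the cancellation law as a black box and obtain the sharper structural information along the way. One small redundancy: your support-function step proves $\floor{aP} + (k'+1-a)P = \floor{(k'+1)P}$ outright without needing the inductive hypothesis, so the induction scaffolding could simply be dropped. Also note you derive (1) from (2), while the paper proves a half of (1) first and then passes to (2); both orders are fine given $P = \floor{aP} + \set{P}$.
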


Before we give the proof, we will restrict these statements to the reflexive case for the sake of comparison.
First, we have $a=1$.
Next, since $\floor{P}$ is the origin, $P = \set{P}$.
So, for reflexive polytopes, the statement (1) is equivalent to $kP = \floor{kP} + P$.
After cancellation by $P$, we obtain the reflexive version of statement (2): $\floor{kP} = (k-1)P$.

\begin{proof}[Proof of Lemma~\ref{lem:kP}]
    Let $k \ge 1$ be an integer.
    Throughout this proof, we repeatedly use the two inequalities appearing in statements (1) and (2) of Lemma~\ref{lem:flo_rem}.
    We also use the inequalities appearing in the facet presentations for $P$ and its dilates.

    We first prove the ``$\supseteq$'' part of statement (1), i.e. that
    \begin{equation}\label{eq:kP_decomp}
        kP \supseteq \floor{(k+a-1)P} + \set{P}, \text{ for all } k \ge 1.
    \end{equation}
    Let $x \in \floor{(k+a-1)P}$ and $y \in \set{P}$.
    Then $n_F(x+y) \ge (1 - (k+a-1)h_F) + ((a-1)h_F - 1) = -kh_F$, for all facets $F$ of $P$.
    Thus, $x+y \in kP$.

    Next, we note that $kP = (k-1)P + \floor{aP} + \set{P}$.
    We substitute this into (\ref{eq:kP_decomp}), then cancel $\set{P}$ from both sides to obtain $\floor{(k+a-1)P} \subseteq (k-1)P + \floor{aP}$.

    We now prove the reverse inclusion of the above.
    Let $x \in (k-1)P$ and $y \in \floor{aP}$.
    Then, $n_F(x+y) \geq -(k-1)h_F + (1-ah_F) = 1 - (k+a-1)h_F$.
    Therefore, $x+y \in \floor{(k+a-1)P}$.
    Thus, we obtain the equality $\floor{(k+a-1)P} = (k-1)P + \floor{aP}$.
    Setting $k' \coloneqq k + a - 1$ then gives us statement (2).
    Adding $\set{P}$ to both sides gives us statement (1).
\end{proof}

The main ingredient in proving Theorem~\ref{thm:big_enough_nG} is a result of Haase and Hofmann, which allows us to guarantee that the second condition of statement (4) of Lemma~\ref{lem:flo_rem} holds.

\begin{theorem}[{\cite[Theorem 4.2]{haase_hofmann_2017}}]\label{thm:idp_pair}
    Let $P, Q \subset \RR^d$ be rational polytopes such that the normal fan $\Nc(P)$ of $P$ is a refinement of the normal fan $\Nc(Q)$ of $Q$.
    Suppose also that for each edge $E$ of $P$, the corresponding face $E'$ of $Q$ has lattice length $\ell_{E'}$ satisfying $\ell_E \ge d \ell_{E'}$.
    Then $(P+Q) \cap \ZZ^d = (P \cap \ZZ^d) + (Q \cap \ZZ^d)$.
\end{theorem}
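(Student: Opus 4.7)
The plan is to take an arbitrary lattice point $z \in (P+Q) \cap \ZZ^d$ and construct a decomposition $z = p+q$ with $p \in P \cap \ZZ^d$ and $q \in Q \cap \ZZ^d$. The inclusion $(P\cap\ZZ^d) + (Q\cap\ZZ^d) \subseteq (P+Q)\cap\ZZ^d$ is immediate, so the content is entirely in the reverse inclusion. Setting $R_z := P \cap (z - Q)$, the problem reduces to showing that the non-empty rational polytope $R_z$ contains a lattice point, because any $p \in R_z \cap \ZZ^d$ pairs with $q := z - p \in Q \cap \ZZ^d$.

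The first step is to describe the face structure of $R_z$ using the normal-fan refinement hypothesis. Since $\Nc(P)$ refines $\Nc(Q)$, every facet normal of $Q$ is also a facet normal of $P$, and more generally each face $F'$ of $Q$ is matched (via shared normal cones) with a face $F$ of $P$ so that the intersection $F \cap (z - F')$ contributes a face of $R_z$. In particular, every edge of $R_z$ is a sub-segment either of an edge of $P$ or of a translated edge of $-Q$, and the matched-edge length hypothesis $\ell_E \ge d \ell_{E'}$ controls these sub-segments quantitatively: the long edges of $P$ have enough room to be cut, by inequalities arising from $z-Q$, into sub-segments whose endpoints can be nudged to lattice positions.

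The second step is to produce a lattice point in $R_z$ by induction on $d$. The base case $d=1$ is elementary since $P$ and $Q$ are lattice intervals and the lengths are compatible. For the inductive step, pick a vertex $v$ of $R_z$ and one of the edges $E_v$ meeting $v$; the hypothesis $\ell_E \ge d\ell_{E'}$ means that, if $v$ fails to be a lattice point, I can step along $E_v$ by a primitive lattice vector and land on (or strictly inside) a facet $H$ of $R_z$ of dimension $d-1$ which, under a lattice projection orthogonal to $E_v$, becomes a rational polytope of the same type $R_{z'}' := P' \cap (z' - Q')$ in $\RR^{d-1}$. The plan is to check that $P'$ and $Q'$ satisfy the same hypotheses with dimension $d-1$ and constant $d-1$, so the inductive hypothesis yields a lattice point of $H$ lifting to one of $R_z$.

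The main obstacle is verifying that this inductive setup is quantitatively compatible. Specifically, one must check that the projection step preserves both the refinement of normal fans (so that $\Nc(P')$ still refines $\Nc(Q')$) and the correctly-scaled edge-length ratios, decreasing from $d$ to $d-1$ exactly; it is precisely this numerical matching that forces the factor $d$ (and not some smaller constant) in the hypothesis. Tracking how the matching of edges of $P$ with faces of $Q$ descends under successive slicing, and confirming that each newly created edge of the slice inherits the correct quantitative bound, is the technical heart of the argument.
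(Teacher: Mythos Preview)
This statement is not proved in the paper; it is quoted verbatim as an external result from Haase and Hofmann \cite{haase_hofmann_2017}, with no argument given. So there is no ``paper's own proof'' to compare against, and your proposal stands on its own as an attempted proof of a cited theorem rather than as a reconstruction of anything in this paper.

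As for the content of your sketch: the reduction to finding a lattice point in the fibre polytope $R_z = P \cap (z - Q)$ is the standard opening move and is correct. The inductive framework you outline---slice, project, and drop the constant from $d$ to $d-1$---is also the right shape, and is indeed close in spirit to how Haase and Hofmann argue. However, your inductive step is underspecified in a way that matters. You write that after stepping along an edge $E_v$ you ``land on (or strictly inside) a facet $H$ of $R_z$'' and then project; but you have not explained which facet of $R_z$ is being used, why the projected pair $(P',Q')$ again satisfies the normal-fan refinement hypothesis, or---most critically---why the edge-length inequality drops by exactly one unit to $\ell_{E} \ge (d-1)\ell_{E'}$ in the projection. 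In the actual argument this numerical bookkeeping is delicate: the ``loss'' of one unit comes from the rounding involved in moving from the vertex $v$ of $R_z$ to a nearby lattice point along the edge direction, and making this precise requires tracking which inequalities of $P$ versus $z-Q$ cut out $v$ and how they interact under the projection. Your final paragraph flags this as the ``technical heart'' but does not carry it out, so as written the proposal is a plausible plan rather than a proof.
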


In order to guarantee the first condition of statement (4) of Lemma~\ref{lem:flo_rem}, we need this next result:

\begin{theorem}[{\cite[Theorem~1.3.3]{viet1997normal}}]\label{cor:idp_d-1}
    Let $P \subset \RR^d$ be a lattice polytope.
    Then $(d-1)P$ is IDP.
\end{theorem}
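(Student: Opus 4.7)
The strategy is the classical triangulation plus fundamental parallelepiped argument due to Bruns and Gubeladze. First, I would take any lattice triangulation of $P$ into lattice $d$-simplices $\Delta_1, \ldots, \Delta_s$ (for instance, a pulling triangulation from a vertex of $P$). This induces a triangulation $k(d-1)P = \bigcup_i k(d-1)\Delta_i$, so every lattice point of $k(d-1)P$ lies in some $k(d-1)\Delta_i$. It therefore suffices to prove $(d-1)\Delta$ is IDP for each individual lattice $d$-simplex $\Delta$.

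For the simplex case, fix vertices $v_0, \ldots, v_d$ of $\Delta$ and use the fundamental parallelepiped decomposition of the cone $C_\Delta$: every lattice point of $C_\Delta$ has a unique expression
\[
    (x, n) = b + \sum_{i=0}^d a_i (v_i, 1), \qquad a_i \in \ZZ_{\ge 0},
\]
where $b = \sum_i \beta_i (v_i, 1)$ with $\beta_i \in [0, 1)$ is the unique lattice representative of $(x, n)$ in the half-open parallelepiped $\Pi_\Delta$ modulo the sublattice generated by $(v_0, 1), \ldots, (v_d, 1)$. The crucial bound is that the height $h := \sum_i \beta_i$ of $b$ is an integer (since $n$ and $\sum a_i$ are integers, forcing $h = n - \sum a_i \in \ZZ$) lying in $\{0, 1, \ldots, d\}$ (since $d+1$ numbers each strictly less than $1$ cannot sum to $d+1$ or more).

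Given $x \in k(d-1)\Delta \cap \ZZ^d$, I would apply this decomposition to $(x, k(d-1))$. When $h \le d-1$, form a single height-$(d-1)$ summand by combining $b$ with $d-1-h$ of the $(v_i, 1)$ generators (available since $\sum a_i = k(d-1) - h \ge d-1-h$ for $k \ge 1$), and then split the remaining $(k-1)(d-1)$ generators into $k-1$ groups of $d-1$. Each of the $k$ resulting pieces lies in $(d-1)\Delta \cap \ZZ^d$, giving the desired decomposition.

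The main obstacle will be the extremal case $h = d$, in which $b$ alone already overshoots the target height $d-1$ and cannot be absorbed into any height-$(d-1)$ summand by adding nonnegative multiples of the $(v_i, 1)$. The classical workaround uses the involution on lattice box points $b \mapsto b' := \sum_i(1-\beta_i)(v_i, 1)$: since $b + b' = \sum_i(v_i, 1) \in \ZZ^{d+1}$, the point $b'$ is automatically a lattice point and sits in $\Pi_\Delta$ at the much smaller height $d+1-d = 1$. The idea is to re-express $(x, k(d-1))$ so that $b'$ plays the role of the fractional part, which reduces the problem to the previously handled low-height regime; this requires $k \ge 2$ (which holds whenever $h = d$ and $d \ge 2$, since then $k(d-1) \ge d$) and some careful bookkeeping to keep the vertex coefficients nonnegative. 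Making this exchange rigorous is the essential technical step of the proof.
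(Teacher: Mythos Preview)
The paper does not supply a proof of this theorem; it is quoted verbatim from the literature (the reference \texttt{viet1997normal}). So there is no in-paper argument to compare against, and the question is whether your sketch stands on its own.

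Your reduction to simplices is fine, and the case $h \le d-1$ is handled correctly. The gap is precisely where you flagged it: the case $h = d$. The involution $b \mapsto b' = \sum_i (1-\beta_i)(v_i,1)$ does not close the argument the way you suggest. Rewriting $(x,\,k(d-1)) = \sum_i (a_i+1)(v_i,1) - b'$ leaves a genuine minus sign, and any attempt to absorb $-b'$ into a single height-$(d-1)$ summand built from $d$ of the available vertex copies forces some coefficient to become $c_i - 1 + \beta_i$ with $c_i = 0$, which is negative (since $\sum c_i = d$ over $d+1$ indices cannot have all $c_i \ge 1$). Likewise, peeling $b'$ directly gives $b - b'$ with coefficients $2\beta_i - 1$, which is negative whenever $\beta_i < \tfrac12$; nothing in your setup prevents this.

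The standard repair---and the point your sketch is missing---is to take not an arbitrary lattice triangulation but a \emph{full} one, i.e.\ one whose vertex set is all of $P \cap \ZZ^d$, so that every $\Delta_i$ is an \emph{empty} lattice simplex. For an empty simplex the case $h = d$ is impossible: if $\sum_i \beta_i = d$ with each $\beta_i \in [0,1)$, then necessarily every $\beta_i \in (0,1)$, so $b'$ is a genuine box point at height~$1$ and hence an interior lattice point of $\Delta$, contradicting emptiness. With $h \le d-1$ guaranteed throughout, your first case already completes the proof, and the problematic $h=d$ analysis is never needed.
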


We are now ready to give the proof.

\begin{proof}[Proof of Theorem~\ref{thm:big_enough_nG}]
    We first wish to find a suitable $K$ which satisfies
    \[
        kP \cap \ZZ^d = \floor{kP} \cap \ZZ^d + \set{kP} \cap \ZZ^d, \text{ for all } k \ge K.
    \]
    Looking at statement (2) of Lemma~\ref{lem:kP}, we see that $(k-a)P$ is a Minkowski summand of $\floor{kP}$; thus, we get a crude lower bound on the length of the edges of $\floor{kP}$: for $k \ge a$, every edge $E$ of $\floor{kP}$ has lattice length $\ell_E \ge k-a$.
    Denote by $L$ the maximum edge length of $\set{aP}$ and set $K \coloneqq dL + a$.
    Note that for $k \ge a$, the polytopes $\set{kP}$ and $\set{aP}$ coincide.
    So, for all $k \ge K$, every edge $E$ of $\floor{kP}$ will have lattice length $\ell_E \ge k-a \ge dL$.

    Further, statement (2) of Lemma~\ref{lem:kP} implies that, for $k \ge a+1$, the normal fan $\Nc(\floor{kP})$ coincides with $\Nc(P)$.
    Hence, $\Nc(\floor{kP})$ is a refinement of the normal fan of $\set{kP}$.
    Thus, we may apply Theorem~\ref{thm:idp_pair}, obtaining that $kP \cap \ZZ^d = \floor{kP} \cap \ZZ^d + \set{kP} \cap \ZZ^d$.

    Finally, since $a,L \ge 1$, we see that $K \ge d-1$.
    Thus, by Theorem~\ref{cor:idp_d-1}, we have that $kP$ is IDP.
    Therefore, by statement (4) of Lemma~\ref{lem:flo_rem}, we can conclude that $kP$ is nearly Gorenstein for all $k \ge K$.
\end{proof}

\begin{remark}
    We say that a graded ring $R$ is \emph{Gorenstein on the punctured spectrum} \cite{herzog2019trace} if $\trace(\omega_R)$ contains $\mm^k$ for some integer $k \ge 0$.
    If $k=0$, this is just the Gorenstein condition; if $k=1$, it is the nearly Gorenstein condition.
    Now, for a lattice polytope $P \subset \RR^d$, it can be shown that its Ehrhart ring $A(P)$ is Gorenstein on the punctured spectrum if there exists a positive integer $K$ such that $kP \cap \ZZ^d$ coincides with $(\strint(C_P) \cap \ZZ^{d+1}+\ant(C_P)\cap\ZZ^{d+1})_k$, for all $k \ge K$.
    Therefore, using Theorem~\ref{thm:big_enough_nG}, it's straightforward to show that all lattice polytopes $P$ satisfying $P = \floor{aP} + \set{P}$ are Gorenstein on the punctured spectrum.
\end{remark}

% \begin{remark}
%     This gives us a coarse classification of nearly Gorenstein polytopes in terms of their normal fans.
%     Since dilating a polytope doesn't change its normal fan, we may dilate a nearly Gorenstein polytope $P$ with $a_P \ge 2$ into a nearly Gorenstein polytope $kP$ which has $a_{kP} = 1$.
%     Then, we use Proposition \ref{thm:normal_fan} to constrain the the normal fan in terms of $d$-dimensional reflexive polytopes.

%     This is what led to the creation of the counterexample to Question \ref{ques:1}...
% \end{remark}

\subsection{Decompositions of nearly Gorenstein polytopes}\label{deco}
In this subsection, we first prove Theorem~\ref{thm:conj1_intro}.
This naturally leads to an investigation of whether nearly Gorenstein polytopes decompose into the Minkowski sum of Gorenstein polytopes (Questions~\ref{ques:1} and~\ref{ques:2}).
We prove Theorem~\ref{thm:normal_fan_intro}, which leads to a way to systematically construct examples of nearly Gorenstein polytopes.
This is then used to find a counterexample to Questions~\ref{ques:1} and~\ref{ques:2}.
Finally, we conclude the section with a result about indecomposable nearly Gorenstein polytopes.

\begin{theorem}[{Theorem~\ref{thm:conj1_intro}}]\label{thm:conj1}
    Let $P \subset \RR^d$ be a lattice polytope which satisfies $P = \floor{aP} + \set{P}$.
    Then we have
    \[
        \floor{aP} = \setcond{x \in \RR^d}{n_F(x) \ge 1 - ah_F \text{ for all } F \in \facets(P)} \text{\;and }
    \]
    \[
        \set{P} = \setcond{x \in \RR^d}{n_F(x) \ge (a-1)h_F -1 \text{ for all $F\in \facets(P)$}}.
    \]
    In particular, the right hand sides of the equalities are lattice polytopes.
    Furthermore, if $a = 1$, then $\set{P}$ is a reflexive polytope.
\end{theorem}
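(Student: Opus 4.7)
Write $P_1$ and $P_2$ for the two claimed right-hand sides. They are bounded polyhedra (hence polytopes), since their defining inequalities have the same facet normals $n_F$ as those defining the bounded polytope $P$. By Lemma~\ref{lem:flo_rem}(1) and (2), we already have $\floor{aP} \subseteq P_1$ and $\set{P} \subseteq P_2$, so the content of the theorem is the reverse inclusions. The plan is first to establish the Minkowski identity $P_1 + P_2 = \floor{aP} + \set{P}$, and then to conclude by standard Minkowski cancellation of polytopes.

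For the identity, a direct facet check gives $P_1 + P_2 \subseteq P$: for $x \in P_1$, $y \in P_2$ and any facet $F$, one has $n_F(x+y) \ge (1-ah_F) + ((a-1)h_F - 1) = -h_F$. The reverse inclusion $P \subseteq P_1 + P_2$ follows from the hypothesis $P = \floor{aP} + \set{P}$ combined with $\floor{aP} \subseteq P_1$ and $\set{P} \subseteq P_2$. Thus $P_1 + P_2 = \floor{aP} + \set{P}$.

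Using $\floor{aP} \subseteq P_1$ and $\set{P} \subseteq P_2$, the sandwich $\floor{aP} + \set{P} \subseteq P_1 + \set{P} \subseteq P_1 + P_2 = \floor{aP} + \set{P}$ collapses to $P_1 + \set{P} = \floor{aP} + \set{P}$, and Minkowski cancellation of $\set{P}$ (a standard consequence of the additivity of support functions for compact convex sets) yields $P_1 = \floor{aP}$. The symmetric sandwich, cancelling $\floor{aP}$, gives $P_2 = \set{P}$. The ``in particular'' clause is automatic, since $\floor{aP}$ and $\set{P}$ are convex hulls of lattice points by their very definition.

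Finally, when $a=1$ the identity just established reads $\set{P} = \setcond{x \in \RR^d}{n_F(x) \ge -1 \text{ for all } F \in \facets(P)}$. The origin is then an interior point of $\set{P}$, and the polar dual $\set{P}^*$ is the convex hull of the primitive lattice points $n_F$, hence a lattice polytope; combined with $\set{P}$ being a lattice polytope, this yields reflexivity. The crux of the whole argument is the Minkowski identity $P_1 + P_2 = \floor{aP} + \set{P}$, which converts the hypothesis into an equality between two Minkowski decompositions with matched inclusions; once that bridge is in place, cancellation disposes of the rest cleanly.
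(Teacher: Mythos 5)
Your proof is correct and follows essentially the same strategy as the paper: show $\floor{aP}$ and $\set{P}$ are contained in the respective right-hand sides (call them $P_1$, $P_2$), prove $P_1 + P_2 = \floor{aP} + \set{P}$ via the facet inequalities and the hypothesis, and then conclude equality of the summands. The paper reaches that conclusion by a contradiction argument comparing minimal evaluations of a support functional, which is just Minkowski cancellation unrolled; your direct appeal to cancellation of a compact convex summand is a slightly tighter packaging of the same idea, and your polar-duality verification of the reflexivity claim in the $a=1$ case is a correct, more explicit version of the paper's remark that all facets of $\set{P}$ lie at height $1$.
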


\begin{proof}
    Label the two polytopes on the right-hand sides as $Q_1$ and $Q_2$, respectively.
    It's straightforward to see that $\floor{aP} = \conv(Q_1 \cap \ZZ^d)$ and $\set{P} = \conv(Q_2 \cap \ZZ^d)$.
    Thus, $\floor{aP} \subseteq Q_1$ and $\set{P} \subseteq Q_2$.
    Ultimately, we want to prove the reverse inclusions but first, we must show an intermediate equality: $P = Q_1 + Q_2$.
    Let $x \in Q_1$ and $y \in Q_2$.
    Then, for all facets $F$ of $P$, we have $n_F(x + y) \ge 1 - ah_F + (a-1)h_F - 1 = -h_F$.
    Thus, $x + y \in P$ and so, $Q_1 + Q_2 \subseteq P$.
    Conversely, if we combine this with our assumption that $P = \floor{aP} + \set{P}$, we obtain that, in fact, $P = Q_1 + Q_2$.

    We now use the above equality to obtain that $\floor{aP} = Q_1$ and $\set{P} = Q_2$, as follows.
    Assume towards a contradiction that $Q_1 \not\subseteq \floor{aP}$, i.e. there exists a vertex $v$ of $Q_1$ which doesn't belong to $\floor{aP}$.
    Choose a normal vector $n \in (\RR^d)^*$ which achieves its minimal value $h_1$ over $Q_1$ \emph{only} at $v$ (i.e. $n$ lies in the interior of the cone $\sigma_v$ in the (inner) normal fan $\Nc(Q_1)$ which corresponds to $v$).
    Denote by $h_2$ the minimal evaluation of $n$ over $Q_2$
    Then, the minimal evaluation of $n$ over $P$ is $h_1 + h_2$.
    However, for all $x \in \floor{aP}$ and $y \in \set{P}$, we have that $n(x+y) > h_1 + h_2$.
    This contradicts the fact that $P = \floor{aP} + \set{P}$.
    Therefore, the vertices of $Q_1$ coincide with the vertices of $\floor{aP}$; in particular, $\floor{aP} = Q_1$.
    We similarly obtain that $\set{P} = Q_2$.

    Next, since $\floor{aP}$ and $\set{P}$ are lattice polytopes by definition, we note that $Q_1$ and $Q_2$ are lattice polytopes in this situation.

    Finally, suppose we are in the case when $P$ has an interior lattice point, i.e. $a=1$.
    By substituting this into the second equality, we see that the remainder polytope $\set{P}$ is indeed reflexive as all its facets lie at height $1$.

\end{proof}

In contrast, when $P$ has no interior points, the remainder polytope $\set{P}$ is not necessarily even Gorenstein.

\begin{example}
    Consider the polytope
    \[
        P = \conv\set{(0,0,0), (2,0,0), (1,1,0), (0,1,0), (0,0,1), (2,0,1), (1,1,1), (0,1,1)}.
    \]
    We can verify that $P$ is nearly Gorenstein and IDP, but the remainder polytope $\set{P}$ is not Gorenstein.
    However, $\set{P}$ can be written as the Minkowski sum of
    \[
        \conv\set{(0,0,0), (1,0,0), (0,1,0)} \quad \text{ and } \quad \conv\set{(-1,-1,-1), (-1,-1,0)},
    \]
    which are both Gorenstein.
\end{example}

We see similar behavior when studying the nearly Gorensteinness for certain restricted classes of polytopes.
This motivated us to pose the following question.

\begin{question}\label{ques:1}
    If $P$ is nearly Gorenstein, then can we write $P = P_1 + \cdots + P_s$ for some Gorenstein lattice polytopes $P_1, \ldots, P_s$?
\end{question}

%In particular, it is already known that this holds for order polytopes and stable set polytopes of perfect graphs.
We recall that $P$ is (\textit{Minkowski}) \textit{indecomposable} if $P$ is not a singleton and if there exist lattice polytopes $P_1$ and $P_2$ with $P=P_1+P_2$,
then either $P_1$ or $P_2$ is a singleton.
Note that if $P$ is not a singleton, then we can write $P=P_1+\cdots +P_s$ for some indecomposable lattice polytopes $P_1,\ldots, P_s$.

Then, Question~\ref{ques:1} can be rephrased as:
\begin{question}\label{ques:2}
    If $P$ has an indecomposable non-Gorenstein lattice polytope as a Minkowski summand,
    then is $P$ not nearly Gorenstein?
\end{question}
This question has a positive answer for IDP $(0,1)$-polytopes, which is shown in Section~\ref{sec:(0,1)}.
For the remainder of this section, we will build up some machinery which allows for the efficient construction of nearly Gorenstein polytopes.
We then use this in Example~\ref{exa:counter} to give an answer to Questions~\ref{ques:1}~and~\ref{ques:2}.

\begin{theorem}[{Theorem~\ref{thm:normal_fan_intro}}]\label{thm:normal_fan}
    % for lattice polytope P: Can probably strengthen to: P = \floor{aP} + \set{P} iff P has this facet presentation
    Let $P \subset \RR^d$ be a nearly Gorenstein polytope.
    Then there exists a reflexive polytope $Q \subset \RR^d$ such that
    \[
        P = \setcond{x \in \RR^d}{n(u) \ge -h_n \text{ for all } n \in \boundary Q^* \cap (\ZZ^d)^*},
    \]
    where $h_n$ are integers.
    Moreover, the inequalities defined by $n \in \verto(Q^*)$ are irredundant.
    Furthermore, the number of facets of a nearly Gorenstein polytope is bounded by a constant depending on the dimension $d$.
\end{theorem}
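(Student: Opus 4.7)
The plan is to construct a reflexive polytope $Q$ explicitly from the Minkowski decomposition $P = \floor{aP} + \set{P}$ guaranteed by Proposition~\ref{prop:dec}, tailored so that its facet heights along every primitive normal of $P$ collapse to $-1$. Specifically, I would set
\[
    Q \coloneqq a \set{P} + (a-1) \floor{aP}.
\]
Using $P = \floor{aP} + \set{P}$ together with Lemma~\ref{lem:flo_rem}, for every primitive facet normal $n_F$ of $P$ the lower bounds $\min_{\floor{aP}} n_F \ge 1 - ah_F$ and $\min_{\set{P}} n_F \ge (a-1)h_F - 1$ must in fact be equalities, since their sum equals $\min_P n_F = -h_F$. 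A direct calculation then yields $\min_Q n_F = a((a-1)h_F - 1) + (a-1)(1 - ah_F) = -1$, so $Q$ admits the presentation $\setcond{x \in \RR^d}{n_F(x) \ge -1 \text{ for all } F \in \facets(P)}$.

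Next, I would verify that $Q$ is reflexive. Being a Minkowski sum of integer dilates of the lattice polytopes $\set{P}$ and $\floor{aP}$, the polytope $Q$ is itself a lattice polytope, and the facet presentation above shows $0 \in \strint(Q)$. The vertices of $Q^*$ correspond to the facets of $Q$ and are therefore primitive lattice vectors among the $n_F$, so $Q^*$ is a lattice polytope as well. In the degenerate case $a = 1$, the term $(a-1)\floor{aP}$ vanishes and $Q = \set{P}$, which is already reflexive by Theorem~\ref{thm:conj1}.

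From $\min_Q n_F = -1$ it follows that every facet normal $n_F$ of $P$ lies in $\boundary Q^* \cap (\ZZ^d)^*$. Defining $h_n \coloneqq -\min_P n$ for each $n \in \boundary Q^* \cap (\ZZ^d)^*$ (an integer since $P$ is a lattice polytope) then yields the facet presentation of $P$ claimed in the statement. For $n \in \verto(Q^*)$, the corresponding facet of $Q$ arises from a facet of $\set{P}$ or $\floor{aP}$; since $P = \floor{aP} + \set{P}$ forces every facet normal of either summand to be a facet normal of $P$, these vertices give facet-defining, and hence irredundant, inequalities of $P$. Finally, the facets of $P$ inject into $\boundary Q^* \cap (\ZZ^d)^*$, and the finiteness of reflexive polytopes in dimension $d$ up to $\GL(d, \ZZ)$-equivalence yields a uniform bound on $|\boundary Q^* \cap (\ZZ^d)^*|$ in terms of $d$.

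The main obstacle is identifying the correct Minkowski combination $a\set{P} + (a-1)\floor{aP}$ so that the heights telescope uniformly to $-1$; once this candidate is in hand, the reflexivity of $Q$, the irredundancy of the inequalities indexed by $\verto(Q^*)$, and the dimension-dependent bound on the number of facets all reduce to routine verifications.
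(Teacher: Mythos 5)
Your construction recovers the paper's reflexive polytope in disguise: using $P = \floor{aP} + \set{P}$ together with Lemma~\ref{lemma:aP_decomp}, one computes
\[
    a\set{P} + (a-1)\floor{aP} = (a-1)\bigl(\floor{aP} + \set{P}\bigr) + \set{P} = (a-1)P + \set{P} = \set{aP},
\]
so your $Q$ is exactly the $Q = \set{aP}$ of the paper, and the overall strategy is the same. The difference is presentational: you verify reflexivity by a direct height computation instead of invoking Theorem~\ref{thm:conj1} applied to $aP$ (which has codegree $1$). Two small points need tightening. First, the step from ``$\min_Q n_F = -1$ for every $F$'' to ``$Q = \setcond{x}{n_F(x) \ge -1 \text{ for all } F \in \facets(P)}$'' silently uses that the facet normals of $Q$ lie among the $n_F$; this follows because $\Nc(Q)$ is the common refinement of $\Nc(\floor{aP})$ and $\Nc(\set{P})$, hence equals $\Nc(P)$ when $a \geq 2$ and is coarsened by $\Nc(P)$ when $a = 1$, but you should say so. Second, your irredundancy justification ``the corresponding facet of $Q$ arises from a facet of $\set{P}$ or $\floor{aP}$'' is not true in general for Minkowski sums in dimension $\geq 3$ (a facet of a sum can decompose into lower-dimensional faces of both summands); the correct and shorter route is again the normal-fan observation just mentioned, which shows directly that every facet normal of $Q$ is a facet normal of $P$. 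With these two small repairs the argument is complete and equivalent to the paper's.
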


Before we dive into the proof, it will be useful to have the following lemma.

\begin{lemma}\label{lemma:aP_decomp}
    Let $P$ be a lattice polytope satisfying $P = \floor{aP} + \set{P}$.
    Then $aP = \floor{aP} + \set{aP}$.
    Moreover, $\set{aP} = (a-1)P + \set{P}$.
\end{lemma}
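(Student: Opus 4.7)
The plan is to derive both identities simultaneously by first establishing the decomposition $aP = \floor{aP} + \set{aP}$, and then extracting $\set{aP} = (a-1)P + \set{P}$ via Rådström's cancellation lemma. The key observation is that the inclusion $\set{aP} \subseteq (a-1)P + \set{P}$ would be awkward to prove directly on lattice points, whereas cancellation makes it automatic once both sides appear as summands in the same Minkowski identity.

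Starting from the hypothesis $P = \floor{aP} + \set{P}$, one has
\[
aP \;=\; P + (a-1)P \;=\; \floor{aP} + \bigl((a-1)P + \set{P}\bigr).
\]
I would then show $(a-1)P + \set{P} \subseteq \set{aP}$. Since $aP$ has codegree $1$, the definition gives $\set{aP} = \conv(\ant(C_{aP})_0 \cap \ZZ^d)$, namely the convex hull of lattice points $z$ with $n_F(z) \ge -1$ for all $F \in \facets(P)$. The vertices of $(a-1)P + \set{P}$ are lattice points of the form $v_1 + v_2$, where $v_1$ is a vertex of $(a-1)P$ and $v_2$ is a vertex of $\set{P}$, and each satisfies
\[
n_F(v_1+v_2) \;\ge\; -(a-1)h_F + (a-1)h_F - 1 \;=\; -1.
\]
So every vertex of $(a-1)P + \set{P}$ lies in $\set{aP}$, and convexity propagates the containment. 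Combining this with the previous display yields $aP \subseteq \floor{aP} + \set{aP}$.

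For the reverse inclusion, Lemma~\ref{lem:flo_rem}(1) gives $\floor{aP} \subseteq \{x \in \RR^d : n_F(x) \ge 1 - ah_F\}$, while Lemma~\ref{lem:flo_rem}(2) applied to $aP$ (which has codegree $1$ and facet heights $ah_F$) gives $\set{aP} \subseteq \{x \in \RR^d : n_F(x) \ge -1\}$; summing facetwise produces $\floor{aP} + \set{aP} \subseteq \{x \in \RR^d : n_F(x) \ge -ah_F\} = aP$, completing the first identity. For the second, we then have
\[
\floor{aP} + \bigl((a-1)P + \set{P}\bigr) \;=\; aP \;=\; \floor{aP} + \set{aP},
\]
whose summands are all nonempty compact convex subsets of $\RR^d$. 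Rådström's cancellation lemma cancels $\floor{aP}$ on both sides, yielding $(a-1)P + \set{P} = \set{aP}$. The only real subtlety is correctly identifying the facet presentation of $\set{aP}$ from the fact that $a_{aP} = 1$; once this is in place, the argument is essentially a clean application of cancellation.
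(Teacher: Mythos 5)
Your proof is correct and follows essentially the same route as the paper: show $(a-1)P + \set{P} \subseteq \set{aP}$, add $\floor{aP}$ to obtain $aP \subseteq \floor{aP} + \set{aP}$, prove the reverse inclusion by summing the facet inequalities from Lemma~\ref{lem:flo_rem}, and finally cancel $\floor{aP}$ to get the second identity. The paper just says "the cancellation law" where you cite Rådström, and you are slightly more careful than the paper in the first inclusion (checking it on lattice-point vertices and propagating by convexity, which is the right thing to do since $\set{aP}$ is defined as a convex hull of lattice points rather than as the full polyhedron cut out by $n_F(x)\ge -1$); this is a small gap in the paper's exposition that your write-up correctly closes.
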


\begin{proof}
    We first wish to show that $(a-1)P + \set{P} \subseteq \set{aP}$.
    Let $x \in (a-1)P$ and $y \in \set{P}$.
    Then, by Lemma~\ref{lem:flo_rem}~(2), $n_F(x+y) \ge -(a-1)h_F + (a-1)h_F - 1 = -1$, for all facets $F$ of $P$.
    So, $x+y \in \set{aP}$.
    Thus, $(a-1)P + \set{P} \subseteq \set{aP}$.

    We can add $\floor{aP}$ to both sides of the inclusion to get $aP \subseteq \floor{aP} + \set{aP}$.

    We next wish to show the reverse inclusion of the above.
    Let $z \in \floor{aP}$ and $w \in \set{aP}$.
    Then $n_F(z+w) \ge (1-ah_F) - 1 = -ah_F$, for all facets $F$ of $P$.
    So, $z+w \in aP$.
    Therefore, $\floor{aP} + \set{aP} \subseteq aP$.
    Combining the two inclusions gives the desired equality: $aP = \floor{aP} + \set{aP}$.

    Moreover, we obtain that $\floor{aP} + \set{P} + (a-1)P = \floor{aP} + \set{aP}$.
    Since Minkowski addition of convex sets satisfies the cancellation law, we may cancel both sides by $\floor{aP}$ to obtain the equality $\set{aP} = (a-1)P + \set{P}$.
\end{proof}

\begin{proof}[Proof of Theorem~\ref{thm:normal_fan}]
    We wish to study the (inner) normal fan $\Nc(P)$ of $P$, as it's enough to show that its primitive ray generators all lie in $\boundary Q^* \cap (\ZZ^d)^*$, for some reflexive polytope $Q \subset \RR^d$.
    Since dilation has no effect on the normal fan, we may pass to the normal fan of $aP$.
    Now, by Lemma~\ref{lemma:aP_decomp}, $aP$ has a Minkowski decomposition into $\floor{aP}$ and $\set{aP}$.
    Thus, $\Nc(aP)$ is the common refinement of $\Nc(\floor{aP})$ and $\Nc(\set{aP})$.
    By Proposition~\ref{thm:conj1}, we obtain that $Q \coloneqq \set{aP}$ is a reflexive polytope.
    Hence, the primitive ray generators of $\Nc(Q)$ are vertices of the reflexive polytope $Q^* \subset (\RR^d)^*$; in particular, they are lattice points lying in the boundary of $Q^*$.

    We next look at the contribution to $\Nc(aP)$ coming from $\floor{aP}$.
    Let $n \in (\ZZ^d)^*$ be a primitive ray generator of $\Nc(\floor{aP})$.
    Then, by definition of the remainder polytope, $n(x) \ge -1$, for all $x \in Q$.
    But now, this means that $n$ lies in $Q^*$.
    So, since $n \neq 0$ and $Q$ is reflexive, we obtain that $n \in \boundary Q^* \cap (\ZZ^d)^*$.
    Therefore, we have now shown that the primitive ray generators of $\Nc(P) = \Nc(aP)$ contain the vertices of $Q^*$, and that they all lie in $\boundary Q^* \cap (\ZZ^d)^*$.

    Finally, we note that the number of facets of a nearly Gorenstein polytope $P \subset \RR^d$ is bounded by $c_d \coloneqq \sup_Q |\boundary Q^* \cap (\ZZ^d)^*|$, where $Q$ runs over all $d$-dimensional reflexive polytopes.
    Since there are only finitely reflexive polytopes in each dimension $d$, and all polytopes only have a finite number of boundary points, we see that $c_d$ is a finite number.
\end{proof}

We will now detail how to construct nearly Gorenstein polytopes.
First, choose a reflexive polytope $Q \subset \RR^d$.
Then, choose a (possibly empty) subset $S'$ of the boundary lattice points of $Q^*$ which are not vertices of $Q^*$.
Now, for each $n \in S \coloneqq S' \cup \verto(Q^*)$, choose the height $h_n \in \ZZ$.
Construct a polytope $P'$ defined by $n(x) \ge -h_n$ for all $n \in S$, and assert that none of these inequalities are redundant.
Next, we can dilate $P'$ to $rP'$ so that it's a lattice polytope which contains an interior lattice point.
By construction, its remainder polytope $\set{rP'}$ coincides with the reflexive polytope $Q$.
In practice, $rP'$ has a Minkowski decomposition into $\floor{rP'}$ and $\set{rP'}$, but we don't yet have a proof that this always holds.
Finally, we can use Theorem~\ref{thm:big_enough_nG} to dilate $rP'$ even further to $P \coloneqq krP'$ so that $P = \floor{P} + \set{P}$ is nearly Gorenstein.

\begin{example}\label{exa:counter}
    Consider the polytope
    \[
        P = \conv\set{(-4, -3, -4),
        (-3, -1, -3),
        (-2, -2, -3),
        (0, 1, 4),
        (0, 4, 1),
        (3, 1, 1)}.
    \]
    Note that $P$ has many interior lattice points, so we are definitely in the case $a_P = 1$.
    Its floor polytope is
    \[
        \floor{P} = \conv\set{(-3,-2,-3), (0,3,1), (0,1,3), (2,1,1)}.
    \]
    This is an indecomposable simplex, which is not Gorenstein.
    Its remainder polytope is
    \[
        \set{P} = \conv\set{(-1,-1,-1), (1,0,0), (0,1,0), (0,0,1)},
    \]
    which is clearly reflexive.
    We have $P = \floor{P} + \set{P}$.
    We use Magma \cite{magma} to verify that $P \cap \ZZ^3 = (\floor{P} \cap \ZZ^3) + (\set{P} \cap \ZZ^3)$ and that $P$ is IDP.
    Thus, we may conclude by Lemma~\ref{lem:flo_rem} that $P$ is a nearly Gorenstein polytope.
    It has no Minkowski decomposition into Gorenstein polytopes; therefore, we may answer Questions~\ref{ques:1} and~\ref{ques:2} in the negative.

    % It's not the end of the world if $P$ is not IDP, we should be able to consider some dilate $kP$ which \textbf{is} IDP and get the counterexample -- see Conjecture \ref{thm:big_enough_nG}.
\end{example}

\bigskip

%We recall that $P$ is \textit{indecomposable} if $P$ is not a singleton and if there exist lattice polytopes $P_1$ and $P_2$ with $P=P_1+P_2$,
%then either $P_1$ or $P_2$ is a singleton.
%Note that if $P$ is not a singleton, then we can write $P=P_1+\cdots +P_s$ for some indecomposable lattice polytopes $P_1,\ldots, P_s$.

%Then, Question~\ref{ques:1} can be rephrased as:
%\begin{question}\label{ques:2}
    %If $P$ has an indecomposable non-Gorenstein lattice polytope as a Minkowski summand,
    %then is $P$ not nearly Gorenstein?
%\end{question}
%Note that if Conjecture~\ref{ind} holds, then Conjecture~\ref{conj2} also holds.

%If this question holds, then indecomposable non-Gorenstein lattice polytopes must not be nearly Gorenstein.
%The following theorem guarantees that it is true.

We end this section by giving the following theorem about nearly Gorensteinness of indecomposable polytopes, which plays an important role in the characterisation of nearly Gorenstein $(0,1)$-polytopes in Section~\ref{sec:(0,1)}.

\begin{theorem}\label{thm:indecomposable}
    Let $P$ be an indecomposable lattice polytope.
    Then, $P$ is nearly Gorenstein if and only if $P$ is Gorenstein.
\end{theorem}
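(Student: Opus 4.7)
My plan is to prove both directions. The reverse direction is immediate because every Gorenstein ring is by definition nearly Gorenstein. For the forward direction, I will assume $P$ is indecomposable and nearly Gorenstein, invoke Proposition~\ref{prop:dec} to obtain the Minkowski decomposition $P = \floor{aP} + \set{P}$ where $a = a_P$, and use indecomposability to conclude that at least one of the two summands must be a singleton. I will then treat the two cases separately.

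In the first case, where $\floor{aP} = \{v\}$ for a lattice point $v$, the decomposition forces $\set{P} = P - v$. Comparing the facet heights of $P - v$ with the presentation of $\set{P}$ from Theorem~\ref{thm:conj1} will give $n_F(v) = -a h_F + 1$ for every facet $F$ of $P$. Applying the same translation to $aP$ then produces $aP - v = \{x \in \RR^d : n_F(x) \ge -1 \text{ for all } F \in \facets(P)\}$, which is a lattice polytope with the origin in its interior whose polar dual is the lattice polytope $\conv\{n_F : F \in \facets(P)\}$; hence it is reflexive, so $P$ is Gorenstein.

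The main obstacle will be to rule out the second case, where $\set{P} = \{w\}$. If $a = 1$, Theorem~\ref{thm:conj1} asserts that $\set{P}$ is reflexive and thus full-dimensional, contradicting it being a singleton. If $a \ge 2$, then $\floor{aP} = P - w$, and an analogous facet comparison will yield $n_F(w) = (a-1)h_F - 1$ for every facet $F$. The key observation will be that the lattice translate $(a-1)P - w = \{x \in \RR^d : n_F(x) \ge -1 \text{ for all } F \in \facets(P)\}$ is itself reflexive, so $w$ lies in the strict interior of $(a-1)P$; this forces $a_P \le a-1$, contradicting $a_P = a$. Hence Case B cannot occur, and the proof is complete.
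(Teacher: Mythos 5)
Your argument is essentially the same as the paper's (split by indecomposability into the cases $\floor{aP}$ singleton or $\set{P}$ singleton, then use Theorem~\ref{thm:conj1} for reflexivity), but carried out via explicit facet-height comparison rather than by invoking Lemma~\ref{lemma:aP_decomp}. Both routes work.

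One small sign slip in Case B: with $n_F(w)=(a-1)h_F-1$ it is $(a-1)P+w$, not $(a-1)P-w$, that equals $\setcond{x}{n_F(x)\geq -1}$. Correspondingly it is $-w$, not $w$, that lies in the strict interior of $(a-1)P$. This does not affect the conclusion: either way $(a-1)P$ is a lattice translate of a reflexive polytope and so has an interior lattice point, contradicting $a_P=a$. Your handling of the $a=1$ subcase (a singleton cannot be reflexive) is in fact slightly more careful than the paper's phrasing, which implicitly relies on the same observation. The paper's version, by passing through $\set{aP}=(a-1)P+\set{P}$ from Lemma~\ref{lemma:aP_decomp}, sidesteps the facet arithmetic and hence the opportunity for the sign error, at the cost of one extra lemma.
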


%Before we begin the proof, we must give the following statement.

%\begin{lemma}(see \cite[Theorem 4.4]{miyashita2022levelness})\label{MH}
    %If $A(P)$ is nearly Gorenstein but not Gorenstein, then $\floor{aP}$ is not a singleton.
%\end{lemma}

\begin{proof}%[Proof of Theorem~\ref{thm:indecomposable}]
    It is already clear that Gorensteinness implies nearly Gorensteinness, so we just have to treat the converse implication.
    Suppose that $P$ is nearly Gorenstein.
    By Proposition~\ref{prop:dec}, we have that $P = \floor{aP} + \set{P}$.
    Since $P$ is indecomposable, either (i) $\floor{aP}$ is a singleton or (ii) $\set{P}$ is a singleton.

    We first deal with case (i).
    Consider $aP$.
    By Lemma~\ref{lemma:aP_decomp}, $aP = \floor{aP} + \set{aP}$.
    Thus, $aP$ is a translation of $\set{aP}$.
    By Proposition~\ref{thm:conj1}, $\set{aP}$ is reflexive.
    Thus, $P$ is Gorenstein.

    The argument for case (ii) is similar.
    We consider $\set{aP}$.
    By Lemma~\ref{lemma:aP_decomp}, $\set{aP} = (a-1)P + \set{P}$.
    Proposition~\ref{thm:conj1} tells us that $\set{aP}$ is reflexive; therefore, $(a-1)P$ is a translation of a reflexive polytope.
    But this is an absurdity as it implies that $(a-1)P$ has an interior lattice point, contradicting that the codegree of $P$ is $a$.
    Thus, this case cannot occur.

\end{proof}

\bigskip

\section{Nearly Gorenstein $(0,1)$-polytopes}\label{sec:(0,1)}

In this section, we consider the case of $(0,1)$-polytopes.
We provide the characterisation of nearly Gorenstein $(0,1)$-polytopes which are IDP.
Moreover, we also characterise nearly Gorenstein edge polytopes of graphs satisfying the odd cycle condition and characterise nearly Gorenstein graphic matroid polytopes.
%Let $\verto(P)$ be the set of vertices of a polytope $P$.

\subsection{The characterisation of nearly Gorenstein $(0,1)$-polytopes}\label{subsec:char}

\begin{lemma}\label{dec01}
    Let $P\subset \RR^d$ be a $(0,1)$-polytope.
    Then, after a change of coordinates, we can write $P=P_1\times \cdots \times P_s$ for some indecomposable $(0,1)$-polytopes $P_1,\ldots,P_s$.
\end{lemma}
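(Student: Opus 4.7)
My plan is to proceed by induction on the ambient dimension $d$. The base case $d \leq 1$ is immediate, since any $(0,1)$-polytope of dimension at most one is either a singleton or the unit interval $[0,1]$, both of which are indecomposable.

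For the inductive step, I will let $P \subset \RR^d$ be a $(0,1)$-polytope and split into two cases. If $P$ is itself indecomposable as a $(0,1)$-polytope, the statement follows immediately with $s = 1$. Otherwise, by the definition of decomposability there exist a coordinate permutation and a non-trivial partition $[d] = A \sqcup B$ such that, after reordering the coordinates, $P = \pi_A(P) \times \pi_B(P)$ with $\lvert A \rvert, \lvert B \rvert < d$, where $\pi_A$ and $\pi_B$ denote the coordinate projections onto $\RR^A$ and $\RR^B$ respectively.

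I will then observe that the two factors $\pi_A(P)$ and $\pi_B(P)$ are themselves $(0,1)$-polytopes, since every vertex of a coordinate projection of a $(0,1)$-polytope is the image of a vertex of $P$ and therefore lies in $\{0,1\}^A$ or $\{0,1\}^B$ respectively. The inductive hypothesis applied to each of these factors then produces product decompositions into indecomposable $(0,1)$-polytopes, and composing the coordinate permutations arising at the two levels of the recursion into a single change of coordinates on $\RR^d$ yields the required decomposition $P = P_1 \times \cdots \times P_s$.

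I do not expect any serious obstacle: the argument is essentially a recursive unpacking of the definition of decomposability. The one non-cosmetic point is the observation that coordinate projections of $(0,1)$-polytopes are again $(0,1)$-polytopes, which is immediate by inspecting vertices; the only remaining item is the bookkeeping of composing the coordinate permutations at different levels of the recursion into a single permutation on $\RR^d$ at the end.
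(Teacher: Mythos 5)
Your inductive framework is tidy, but there is a genuine gap at the heart of the inductive step, and it is exactly where the paper does its real work. When you write that ``by the definition of decomposability there exist a coordinate permutation and a non-trivial partition $[d]=A\sqcup B$ such that, after reordering, $P=\pi_A(P)\times\pi_B(P)$,'' you have silently replaced Minkowski decomposability---which is the paper's notion of \emph{indecomposable} and the one required downstream (the factors must be Minkowski-indecomposable so that Theorem~\ref{thm:indecomposable} can be applied to them in the proof of Theorem~\ref{01})---with direct-product decomposability along a coordinate split. These are not the same by definition: $P$ Minkowski decomposable means there exist lattice polytopes $Q_1,Q_2$, neither a singleton, with $P=Q_1+Q_2$; this does \emph{not} hand you a coordinate partition with $P=\pi_A(P)\times\pi_B(P)$. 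For general lattice polytopes the two notions diverge (a hexagon is Minkowski decomposable but is certainly not a coordinate product), and proving that for $(0,1)$-polytopes a Minkowski decomposition can always be converted into a coordinate-aligned product decomposition is precisely the non-trivial content of this lemma. If instead you intended ``indecomposable as a $(0,1)$-polytope'' to mean ``not a coordinate product,'' your induction is correct but proves a strictly weaker statement than what is needed.

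The missing argument is the one the paper supplies: starting from a Minkowski decomposition $P=P_1'+P_2'$ of the $(0,1)$-polytope $P$, first show that the summands can be translated so that $P_1'$ and $P_2'$ are themselves $(0,1)$-polytopes (by examining $\pi_i(P_1'\cap\ZZ^d)$ and $\pi_i(P_2'\cap\ZZ^d)$ coordinate by coordinate, each is a singleton or a pair of consecutive integers, and one picks the translation $w$ accordingly); then observe that for each $i\in[d]$ one of $\pi_i(P_1')$, $\pi_i(P_2')$ must be $\{0\}$, which is what produces the coordinate partition $A\sqcup B$ and hence $P=P_1\times P_2$ up to a change of coordinates. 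Your recursion becomes sound once this bridge from Minkowski sums to coordinate products is inserted; as written, the key step begs the question.
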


\begin{proof}
    As mentioned in Section~\ref{sec:nearly}, we can write $P=P_1'+\cdots+P_s'$ for some indecomposable lattice polytopes $P_1',\ldots,P_s'$.

    First, we show that we can choose $P_1',\ldots,P_s'$ so that these are $(0,1)$-polytopes.
    Suppose that we can write $P=P_1'+P_2'$ for some lattice polytopes $P_1'$ and $P_2'$.
    Then, for any $v\in P_1'\cap \ZZ^d$ and for any $u\in P_2'\cap \ZZ^d$, $v+u$ is a $(0,1)$-vector.
    Therefore, for any $i\in [d]$, $\pi_i(P_1'\cap \ZZ^d)$ can take one of the following forms: (i) $\{w_i\}$ or (ii) $\{w_i,w_i+1\}$ for some $w_i\in \ZZ$.
    In case (i), $\pi_i(P_2'\cap \ZZ^d)$ is equal to $\{-w_i\}$, $\{-w_i+1\}$ or $\{-w_i,-w_i+1\}$.
    In case (ii), $\pi_i(P_2'\cap \ZZ^d)$ is equal to $\{-w_i\}$.
    Thus, in all cases, $P_1'-w$ and $P_2'+w$ are $(0,1)$-polytopes and we have $P=(P_1'-w)+(P_2'+w)$, where $w=(w_1,\ldots,w_d)$.

    Moreover, if we can write $P=P_1'+P_2'$ for some $(0,1)$-polytopes $P_1'$ and $P_2'$, then we can see that either $\pi_i(P_1')$ or $\pi_i(P_2')$ is equal to $\{0\}$ for any $i\in [d]$.
    Therefore, after a change of coordinates, we can write $P=P_1\times P_2$ for some $(0,1)$-polytopes $P_1$ and $P_2$.
\end{proof}

Now, we provide the main theorem of this section.

\begin{theorem}\label{01}
    Let $P$ be an IDP $(0,1)$-polytope.
    Then,
    $P$ is nearly Gorenstein
    if and only if you can write $P=P_1\times \cdots \times P_s$ for some Gorenstein
    $(0,1)$-polytopes $P_1,\ldots,P_s$ with $|a_{P_i}-a_{P_j}|\le 1$ for $1\le i<j\le s$.
\end{theorem}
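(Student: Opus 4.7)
The plan is to handle both directions through the Segre product structure that arises from a product decomposition of $P$. By Lemma~\ref{dec01}, after a change of coordinates, $P = P_1 \times \cdots \times P_s$ with each $P_i$ an indecomposable (hence at-least-$1$-dimensional) $(0,1)$-polytope. A short argument using the IDP of $P$ --- pair $x \in k P_i \cap \ZZ^{d_i}$ with $kv$ for fixed lattice vertices $v$ of the other factors, apply IDP of $P$, then project back --- shows each $P_i$ is IDP too. Hence $A(P_i) = \kk[P_i]$ is standard graded of Krull dimension at least $2$, $A(P) = A(P_1) \# \cdots \# A(P_s)$, and writing $R := A(P)$, $R_i := A(P_i)$, Proposition~\ref{ABC} gives $\omega_R = \omega_{R_1} \# \cdots \# \omega_{R_s}$ and $\omega_R^{-1} = \omega_{R_1}^{-1} \# \cdots \# \omega_{R_s}^{-1}$.

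For the ``only if'' direction, I first invoke Lemma~\ref{miyashita}(1) to deduce that each $R_i$ is nearly Gorenstein, and then Theorem~\ref{thm:indecomposable} to upgrade each $P_i$ to Gorenstein; thus $\omega_{R_i}$ is principal, generated by a homogeneous $f_i$ of degree $a_{P_i}$. To force $|a_{P_i} - a_{P_j}| \le 1$, I would examine the degree-$1$ piece of $\tr(\omega_R) = \omega_R \cdot \omega_R^{-1}$. Here $(\omega_R)_l$ vanishes for $l < \max_i a_{P_i}$, while $(\omega_R^{-1})_{1-l}$ vanishes for $l > \min_i a_{P_i} + 1$. If the spread $\max_i a_{P_i} - \min_i a_{P_i}$ exceeded $1$, no index $l$ would contribute and $(\tr \omega_R)_1 = 0$, contradicting $\mm \subseteq \tr(\omega_R)$.

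For the ``if'' direction, assume each $P_i$ is Gorenstein and set $a := \max_i a_{P_i}$. The goal is to realise every monomial $x_1 \# \cdots \# x_s \in R_k$ (for $k \ge 1$) as a single product of a Segre element in $(\omega_R)_a$ and a Segre element in $(\omega_R^{-1})_{k-a}$. Using the IDP of each $R_i$, I factor $x_i = r_i s_i$ with $r_i$ a monomial of degree $a - a_{P_i} \in \{0,1\}$ and $s_i$ a monomial of degree $k - a + a_{P_i}$; this factorisation always exists because $(R_i)_k$ is spanned by products of $k$ degree-$1$ monomials. Then $f_1 r_1 \# \cdots \# f_s r_s \in (\omega_R)_a$, $f_1^{-1} s_1 \# \cdots \# f_s^{-1} s_s \in (\omega_R^{-1})_{k-a}$, and their product equals $x_1 \# \cdots \# x_s$, giving $\mm \subseteq \tr(\omega_R)$.

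The most delicate step is the Segre bookkeeping in the ``if'' direction: the raw factors $r_i$ sit in different graded pieces $(R_i)_{a - a_{P_i}}$ and do not form a Segre element on their own, but multiplying by the Gorenstein generators $f_i$ (each of compensating degree $a_{P_i}$) lifts them to the common degree $a$. The codegree condition $|a_{P_i} - a_{P_j}| \le 1$ is precisely what makes the shift $a - a_{P_i}$ lie in $\{0, 1\}$, which is exactly what allows the degree-$1$ piece of $\tr(\omega_R)$ to meet $R_1$ (in the necessary direction) and the coordinatewise IDP factorisations to assemble into valid Segre elements (in the sufficient direction).
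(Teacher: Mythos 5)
Your proof is correct and follows the same overall structure as the paper's: reduce via Lemma~\ref{dec01} to a product $P = P_1 \times \cdots \times P_s$ of indecomposable $(0,1)$-polytopes, pass to the Segre product $\kk[P] \cong \kk[P_1] \# \cdots \# \kk[P_s]$, use Lemma~\ref{miyashita}(1) and Theorem~\ref{thm:indecomposable} to force each $P_i$ to be Gorenstein, and then control the spread of codegrees. The genuine difference is in the last step: the paper handles both the constraint $|a_{P_i} - a_{P_j}| \le 1$ and the entire ``if'' direction by citing {[}Herzog--Mohammadi--Page, Corollary 2.8{]}, whereas you re-derive both in an elementary, self-contained way. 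Your degree count on $(\tr\omega_R)_1 = \sum_l (\omega_R)_l \cdot (\omega_R^{-1})_{1-l}$ correctly shows that a spread $\ge 2$ forces this graded piece to vanish, and your explicit Segre-element factorisation $x_i = r_i s_i$ with $\deg r_i = a - a_{P_i} \in \{0,1\}$ (valid thanks to IDP of each $P_i$ and the Gorenstein generators $f_i$) is a clean direct proof of sufficiency. The trade-off is length: the cited corollary is more general and dispenses with the case analysis in one stroke, while your version reveals exactly \emph{why} the unit spread is the precise threshold, making the mechanism transparent. One small point worth making explicit is that each $R_i$ has Krull dimension at least $2$, which is needed to invoke Lemma~\ref{miyashita}; this follows since indecomposable polytopes are at least one-dimensional, as you note parenthetically.
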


\begin{proof}
    It follows from Lemma~\ref{dec01} that we can write $P=P_1\times \cdots \times P_s$ for some indecomposable $(0,1)$-polytopes $P_1,\ldots, P_s$.
    Thus, we have $\kk[P]\cong \kk[P_1]\#\cdots \#\kk[P_s]$.
    Note that if $P$ is IDP, then so is $P_i$ for each $i\in [s]$, and $A(P)$ (resp. $A(P_i)$) coincides with $\kk[P]$ (resp. $\kk[P_i]$).
    Therefore, since $P$ is nearly Gorenstein, $\kk[P]$ is nearly Gorenstein, and hence $\kk[P_i]$ is also nearly Gorenstein from Lemma~\ref{miyashita} (1).
    Furthermore, $P_i$ is nearly Gorenstein.
    Since $P_i$ is indecomposable, $P_i$ is Gorenstein by Theorem~\ref{thm:indecomposable}.
    Moreover, it follows from \cite[Corollary 2.8]{herzog2019measuring} that $|a_{P_i}-a_{P_j}|\le 1$ for $1\le i<j\le s$.

    The converse also holds from \cite[Corollary 2.8]{herzog2019measuring}.
\end{proof}

From this theorem, we immediately obtain the following corollaries:

\begin{corollary}
    Question~\ref{ques:1} is true for IDP $(0,1)$-polytopes.
\end{corollary}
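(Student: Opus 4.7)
The plan is to reduce the corollary to Theorem~\ref{01} and then upgrade the product decomposition it yields into a genuine Minkowski decomposition by embedded summands, as described in Subsection~\ref{subsec_lattice}.

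More precisely, assume $P$ is an IDP $(0,1)$-polytope which is nearly Gorenstein. First I would apply Theorem~\ref{01} to obtain a product decomposition
\[
    P = P_1 \times \cdots \times P_s
\]
in which each factor $P_i \subset \RR^{d_i}$ is a Gorenstein $(0,1)$-polytope (and the codegree condition $|a_{P_i}-a_{P_j}| \le 1$ holds, though it is not needed here). Next, following the recipe recalled in Subsection~\ref{subsec_lattice}, I would embed each factor into the ambient space by padding with zero coordinates: writing $d = d_1 + \cdots + d_s$, define
\[
    P_i' \coloneqq \setcond{(\underbrace{0,\ldots,0}_{d_1+\cdots+d_{i-1}},\,p,\,\underbrace{0,\ldots,0}_{d_{i+1}+\cdots+d_s}) \in \RR^d}{p \in P_i}.
\]
Then, exactly as observed in Subsection~\ref{subsec_lattice}, one has the Minkowski identity
\[
    P = P_1 \times \cdots \times P_s = P_1' + \cdots + P_s'.
\]

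It remains to check that each $P_i'$ is itself a Gorenstein lattice polytope. This is the step that requires a brief justification, but it is not an obstacle: the map $P_i \to P_i'$ is a unimodular embedding onto a coordinate subspace, so lattice points in $k P_i$ and in $k P_i'$ correspond bijectively for every $k \ge 1$. Consequently $A(P_i') \cong A(P_i)$ as graded $\kk$-algebras, and Gorensteinness of $P_i$ transfers to Gorensteinness of $P_i'$. Combining this with the Minkowski identity above exhibits $P$ as a Minkowski sum of Gorenstein lattice polytopes, which is exactly what Question~\ref{ques:1} asks for. The argument is therefore essentially formal once Theorem~\ref{01} is in hand; the only point to verify carefully is the passage from the product to the Minkowski sum, which is handled entirely by the construction in Subsection~\ref{subsec_lattice}.
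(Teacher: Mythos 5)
Your proposal is correct and is essentially the argument the paper has in mind: the paper leaves this corollary unproved precisely because the product decomposition from Theorem~\ref{01} is a Minkowski decomposition once you unwind the identification $P_1\times\cdots\times P_s = P_1'+\cdots+P_s'$ from Subsection~\ref{subsec_lattice}, and the embeddings $P_i \hookrightarrow P_i'$ are unimodular so Gorensteinness transfers. The only cosmetic caveat is that Lemma~\ref{dec01} produces the product decomposition only \emph{after a change of coordinates}; since that change is a lattice automorphism of $\ZZ^d$, it preserves both Minkowski sums and Gorensteinness, so your argument goes through unchanged after conjugating the $P_i'$ back by that automorphism.
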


\begin{corollary}\label{01NGimpliesL}
    Let $P$ be an IDP $(0,1)$-polytope.
    If $\kk[P]$ is nearly Gorenstein, then $\kk[P]$ is level.
\end{corollary}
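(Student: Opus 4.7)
The plan is to chain together Theorem~\ref{01}, the fact that Gorenstein implies level, and Lemma~\ref{miyashita}~(2). Since $P$ is an IDP $(0,1)$-polytope with $\kk[P]$ nearly Gorenstein, Theorem~\ref{01} immediately gives a factorisation $P = P_1 \times \cdots \times P_s$ with each $P_i$ a Gorenstein $(0,1)$-polytope. Passing to Ehrhart/toric rings (and using IDP to identify $\kk[P]$ with $A(P)$), this yields the Segre product decomposition $\kk[P] \cong \kk[P_1] \# \cdots \# \kk[P_s]$.

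Next I would observe that each $\kk[P_i]$ is level: since $P_i$ is Gorenstein, the canonical module $\omega_{\kk[P_i]}$ is a principal ideal of the form $\omega_{\kk[P_i]} \cong \kk[P_i](-a_{P_i})$, so all minimal generators of $\omega_{\kk[P_i]}$ lie in the single degree $a_{P_i}$, which is precisely the definition of level in Definition~\ref{def:level}.

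Finally I would apply Lemma~\ref{miyashita}~(2) to the Segre product $\kk[P] \cong \kk[P_1] \# \cdots \# \kk[P_s]$ to conclude that $\kk[P]$ is level. The one small check to carry out is the Krull dimension hypothesis of Lemma~\ref{miyashita}: each factor $\kk[P_i]$ must have Krull dimension at least $2$. This follows from the standing full-dimensionality assumption on $P$: if any factor $P_i$ were a single point, then $P$ would be constant in the corresponding coordinates, contradicting full-dimensionality. Hence each $P_i$ is at least one-dimensional and so $\dim \kk[P_i] = \dim P_i + 1 \ge 2$, validating the hypothesis.

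There is no real obstacle here; the corollary is essentially a bookkeeping consequence of the structural Theorem~\ref{01} combined with Lemma~\ref{miyashita}~(2). The only subtlety worth flagging is the dimension check above, which I would include explicitly in the written proof to ensure the invocation of Lemma~\ref{miyashita} is airtight.
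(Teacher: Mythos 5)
Your proof is correct and follows essentially the same route as the paper, which simply cites Theorem~\ref{01} and Lemma~\ref{miyashita}~(2). You have usefully filled in the two points the paper leaves implicit (Gorenstein implies level, and the Krull dimension check for Lemma~\ref{miyashita}), and your dimension check is valid since the factors produced by Theorem~\ref{01} are indecomposable and hence not singletons, giving $\dim\kk[P_i] = \dim P_i + 1 \ge 2$.
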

\begin{proof}
    It follows immediately from Lemma~\ref{miyashita} (2) and Theorem~\ref{01}.
\end{proof}

The result of Theorem~\ref{01} can be applied to many classes of $(0,1)$-polytopes such as order polytopes and stable set polytopes.

Order polytopes, which were introduced by Stanley \cite{stanley1986twoposet}, arise from posets.
Let $\Pi$ be a poset equipped with a partial order $\preceq$.
The Ehrhart ring of the order polytope of a poset $\Pi$ is called the Hibi ring of $\Pi$, denoted by $\kk[\Pi]$.
It is known that Hibi rings are standard graded (\cite{hibi1987distributiv}).
For a subset $I \subset P$, we say that $I$ is a \textit{poset ideal} of $P$ if $p \in I$ and $q \preceq p$ then $q \in I$.
According to \cite{stanley1986twoposet}, the characteristic vectors of poset ideals in $\RR^\Pi$ are precisely the vertices of the order polytope of $\Pi$ (hence order polytopes are $(0,1)$-polytopes).
By this fact, we can see that the order polytope of a poset $\Pi$ is indecomposable if and only if $\Pi$ is connected.
Nearly Gorensteinness of Hibi rings have been studied in \cite{herzog2019trace}.
 It is shown that $\kk[\Pi]$ is nearly Gorenstein if and only if $\Pi$ is the disjoint union of pure connected posets $\Pi_1,\ldots,\Pi_q$ such that their ranks of any two also can only differ by at most 1.
 Moreover, in this case, $\kk[\Pi_i]$ is Gorenstein and $\kk[\Pi] \cong \kk[\Pi_1]\#\cdots \#\kk[\Pi_s]$.
 Therefore, its characterisation can be derived from Theorem~\ref{01}.

 Stable set polytopes, which were introduced by Chv\'{a}tal \cite{chvatal1975stable}, arise from graphs.
 For a finite simple graph $G$ on the vertex set $V(G)$ with the edge set $E(G)$, the stable set polytope of $G$, denoted by $\Stab_G$, is defined as the convex hull of the characteristic vectors of stable sets of $G$ in $\RR^{V(G)}$, hence $\Stab_G$ is a $(0,1)$-polytope.
 Here, we say that a subset $S$ of $V(G)$ is a \textit{stable set} if $\set{v,u}\notin E(G)$ for any $v,u\in S$.
This implies that $\Stab_G$ is indecomposable if and only if $G$ is connected.
Stable set polytopes behave well for perfect graphs.
For example, $\Stab_G$ is IDP if $G$ is perfect (cf.\cite{ohsugi2001compressed}).
 Moreover, the characterisation of nearly Gorenstein stable set polytopes of perfect graphs has been given in \cite{hibi2021nearly,miyazaki2022h-perfect}.
 Let $G$ be a perfect graph with connected components $G_1,\ldots,G_s$ and let $\delta_i$ denote the maximal cardinality of cliques of $G_i$.
 Then, it is known that $\Stab_G$ is nearly Gorenstein if and only if the maximal cliques of each $G_i$ have the same cardinality and $|\delta_i-\delta_j| \le 1$ for $1 \le i < j \le s$.
In this case, as in the case of order polytopes, $\kk[\Stab_{G_i}]$ is Gorenstein and $\kk[\Stab_G]\cong \kk[\Stab_{G_1}]\#\cdots \#\kk[\Stab_{G_s}]$.
 Therefore, its characterisation can also follow from Theorem~\ref{01}.

 Furthermore, by using this theorem, we can study the nearly Gorensteinness of other classes of $(0,1)$-polytopes.

\subsection{Nearly Gorenstein edge polytopes}
First, we define the edge polytope and edge ring of a graph.
We refer the reader to \cite[Section 5]{herzog2018binomial} and \cite[Chapters 10 and 11]{villarreal2001monomial} for an introduction to edge rings.

Let $G$ be a finite simple graph on the vertex set $V(G) = \set{1,\ldots,d}$ with the edge set $E(G)$.
Given an edge $e = \set{i,j} \in E(G)$, let $\rho(e) := \eb_i + \eb_j$, where $\eb_i$ denotes the $i$-th unit vector of $\RR^d$ for $i \in [d]$.
We define the \emph{edge polytope} $P_G$ of $G$ as follows:
\begin{align*}
    P_G = \conv\setcond{\rho(e)}{e \in E(G)}.
\end{align*}
The toric ring of $P_G$ is called the \emph{edge ring} of $G$, denoted by $\kk[G]$ instead of $\kk[P_G]$.

Let $G_1,\ldots,G_s$ be the connected components of $G$.
From the definition of edge polytope, we can see that $\kk[G]\cong \kk[G_1]\otimes \cdots \otimes \kk[G_s]$.
Therefore, in considering the characterisation of nearly Gorenstein edge polytopes, we may assume that $G$ is connected.

Moreover, for a connected graph $G$, $P_G$ is IDP if and only if $G$ satisfies the \textit{odd cycle condition},
in other words,
for each pair of odd cycles $C$ and $C'$ with no common vertex, there is an edge $\set{v,v'}$ with $v \in V(C)$ and $v' \in V(C')$
(see \cite{ohsugi1998normal, simis1994ideal}).

%Let $K_{a,b}$ denote the complete bipartite graph with the partition $V(K_{a,b})=V_1\sqcup V_2$ such that $|V_1|=a$ and $|V_2|=b$.
%In this subsection, we assume that $a\le b$

Gorenstein edge polytopes have been investigated in \cite{ohsugi2006GorEdge}.
We now state the characterisation of nearly Gorenstein edge polytopes.

\begin{corollary}\label{cor:edge_polytopes}
    Let $G$ be a connected simple graph satisfying the odd cycle condition.
    Then, the edge polytope $P_G$ of $G$ is nearly Gorenstein if and only if $P_G$ is Gorenstein or $G$ is the complete bipartite graph $K_{n,n+1}$ for some $n\ge 2$.
\end{corollary}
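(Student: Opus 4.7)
The approach is to apply Theorem~\ref{01} to $P_G$, which is IDP because $G$ satisfies the odd cycle condition. Sufficiency is immediate: if $P_G$ is Gorenstein there is nothing to prove, and if $G = K_{n,n+1}$ with $n \ge 2$, then (after a change of coordinates) $P_G$ is the product $\Delta_{n-1} \times \Delta_n$ of two Gorenstein simplices of codegrees $n$ and $n+1$, which differ by one, so Theorem~\ref{01} yields nearly Gorensteinness.

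For necessity, assume $P_G$ is nearly Gorenstein. If $P_G$ is indecomposable, then Theorem~\ref{thm:indecomposable} immediately gives that it is Gorenstein. The main obstacle is then to handle the decomposable case by proving the following reduction: a connected graph $G$ whose edge polytope decomposes non-trivially must be complete bipartite. The plan is to use Lemma~\ref{dec01} to produce a partition $V(G) = A \sqcup B$ with $P_G = P_1 \times P_2$, where $P_1, P_2$ live in the coordinates indexed by $A, B$ respectively. Two structural facts drive the rest: every $\rho(e)$ is a $0/1$-vector of weight $2$, and hence automatically a vertex of $P_G$, giving $\verto(P_G) = \{\rho(e) : e \in E(G)\}$; and $\verto(P_1 \times P_2) = \verto(P_1) \times \verto(P_2)$. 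If any edge of $G$ lay inside $A$, its $\rho$-vector would force $0 \in \verto(P_2)$; combining this with the unit vector $e_k \in \verto(P_1)$ coming from any crossing edge (which exists by connectedness and non-triviality of the split) would require the weight-$1$ vector $(e_k, 0)$ to be a vertex of $P_G$, which is impossible. Hence $G$ has no edges internal to $A$ or $B$ and is bipartite with parts $A, B$; the product structure then forces every pair in $A \times B$ to be an edge, yielding $G = K_{|A|,|B|}$.

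Finally, for $G = K_{n,m}$ one has $P_G \cong \Delta_{n-1} \times \Delta_{m-1}$, with codegrees $n$ and $m$ respectively. Theorem~\ref{01}'s codegree bound $|n-m| \le 1$ leaves either $n = m$ (in which case $P_G$ is Gorenstein) or $\{n, m\} = \{n, n+1\}$; in the latter case, the subcase $n = 1$ collapses $P_G$ to a segment (still Gorenstein), while $n \ge 2$ yields precisely $G = K_{n,n+1}$, completing the dichotomy.
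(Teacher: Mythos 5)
Your proof is correct and reaches the same conclusion, but by a somewhat different route in two places worth noting. First, to see that a decomposable $P_G$ (for $G$ connected) forces $G$ to be complete bipartite, the paper's proof observes that $P_G$ lies in the hyperplane $\sum x_i = 2$, so at most two non-singleton product factors can appear, and then reads off $P_1, P_2$ as standard simplices; you instead run an explicit vertex argument (vertices of $P_G$ are exactly the weight-$2$ $\rho(e)$'s, vertices of a product are pairs of vertices, a same-side edge would inject a weight-$1$ vertex). Both arguments are correct; the hyperplane observation is shorter, while your argument makes the structural fact ``$P_G$ decomposable $\iff$ $G$ complete bipartite'' completely transparent without invoking the weight-$2$ hyperplane as a separate observation. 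Second, the paper finishes by citing Proposition~1.5 of Hibi--Matsushita for which $K_{n,m}$ are nearly Gorenstein, whereas you instead compute the codegrees of the two simplex factors ($n$ and $m$) directly and extract $|n-m|\le 1$ from Theorem~\ref{01}; this makes the corollary self-contained, which is a genuine improvement. One small caveat in your final step: Theorem~\ref{01} asserts the existence of \emph{some} Gorenstein product decomposition satisfying the codegree bound, so to conclude $|n-m|\le 1$ for the particular decomposition $\Delta_{n-1}\times\Delta_{m-1}$ you are implicitly appealing to uniqueness of the decomposition into indecomposable $(0,1)$-factors (which holds, and is exactly the decomposition Lemma~\ref{dec01} produces in the proof of Theorem~\ref{01}); it would be worth a sentence to flag this.
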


\begin{proof}
    If $P_G$ is nearly Gorenstein, then Theorem~\ref{01} allows us to write $P_G=P_1\times \cdots \times P_s$ for some indecomposable Gorenstein $(0,1)$-polytopes $P_1,\ldots,P_s$.
    Then, we have $s\le 2$ since $P_G\subset \{(x_1,\ldots,x_d)\in \RR^d : x_1+\cdots +x_d =2\}$, where $d=|V(G)|$.
    In the case $s=1$, $P_G$ is Gorenstein.
    If $s=2$, we can see that $P_1=\conv\{\eb_1,\ldots,\eb_n\}\subset \RR^n$ and $P_2=\conv\{\eb_1,\ldots,\eb_{d-n}\}\subset \RR^{d-n}$ for some $1< n <d-1$.
    Therefore, we have $G=K_{n,d-n}$, and it is shown by \cite[Proposition 1.5]{hibi2021nearly} that for any $1< n <d-1$, $P_{K_{n,d-n}}$ is nearly Gorenstein if and only if $d-n\in \{n,n+1\}$.
    Since $P_{K_{n,n}}$ is Gorenstein, we get the desired result.
\end{proof}

%We have the following problems:
%\begin{problem}{\upshape

%(1) Prove Conjecture~\ref{thm:conj1} and Conjecture~\ref{ind}.

%(2) Suppose that Conjecture~\ref{conj2} holds.
%Then, does ``nearly Gorenstein $\Rightarrow$ level'' hold?

%(3) The converse of Conjecture~\ref{conj2} does not holds.
%However, I expect that if $P=P_1+\cdots +P_s$ for some Gorenstein lattice polytopes $P_1,\ldots,P_s$,
%then $kP$ is nearly Gorenstein for some $k\in \ZZ_{>0}$. Is it true? }
%\end{problem}

%\bigskip
%\bigskip
%==================================================================================

\subsection{Nearly Gorenstein graphic matroid polytopes}

We start by giving one of several equivalent definitions of a matroid.
\begin{definition}
    Let $E$ be a finite set and let $\mathcal{B}$ be a subset of the power set of $E$ satisfying the following properties:
    \begin{enumerate}
        \item $\mathcal{B}\neq\varnothing$.
        \item If $A,B\in\mathcal{B}$ with $A\neq B$ and $a\in A\setminus B$, then there exists some $b\in B\setminus A$ such that $(A\setminus\set{a})\cup\set{b}\in\mathcal{B}$.
    \end{enumerate}
    Then the tuple $M=(E,\mathcal{B})$ is called a \emph{matroid} with \emph{ground set} $E$ and \emph{set of bases} $\mathcal{B}$.
\end{definition}
Let now $G=(V,E)$ be a multigraph.
The \emph{graphic matroid} associated to $G$ is the matroid $M_G$ whose ground set is the set of edges $E$ and whose bases are precisely the subsets of $E$ which induce a spanning tree of $G$.
Given two matroids $M_E=(E,\mathcal{B}_E)$ and $M_F=(F,\mathcal{B}_F)$, their \emph{direct sum} $M_E\oplus M_F$ is the matroid with ground set $E\sqcup F$ such that for each basis $B$ of $M_E\oplus M_F$, there exist bases $B_E\in\mathcal{B}_E$ and $B_F\in\mathcal{B}_F$ with $B=B_E\sqcup B_F$.
If such a decomposition is not possible for a matroid $M$, we call it \emph{irreducible}.

A graphic matroid with underlying multigraph $G$ is irreducible if and only if its underlying graph is $2$-connected.
If it is not irreducible, its irreducible components correspond precisely to the $2$-connected components of $G$.

For any matroid $M=(E,\mathcal{B})$, we can define its \emph{matroid base polytope} (or simply \emph{base polytope}) by
\[B_M=\conv\set{\sum_{b\in B} e_b\colon B\in\mathcal{B}} \subset \RR^{\abs{E}}\]
where $e_b$ is the incidence vector in $\RR^{\abs{E}}$ corresponding to the basis $b$.
If $B_M$ comes from a graphic matroid $M_G$, we will call it $B_G$.

An alternative definition of matroid base polytopes is as follows.
\begin{definition}[{\cite[Section~4]{gelfand1987combinatorial}}]
    A $(0,1)$-polytope $P\subset\RR^d$ is called \emph{(matroid) base polytope} if there is a positive integer $h$ such that every vertex $v=(v_1,\ldots,v_n)$ satisfies $\sum_{i=1}^d v_i = h$ and every edge (i.e. dimension $1$ face) of $P$ is a translation of a vector $e_i-e_j$ with $i\neq j$.
\end{definition}
It is shown in \cite[Theorem~4.1]{gelfand1987combinatorial} that this definition is indeed equivalent to that of a base polytope as given above and that the underlying matroid is uniquely determined.
This gives us the following two lemmas.
\begin{lemma}
    Let $G$ be a multigraph and let $G_1,\ldots G_n$ be its $2$-connected components.
    Then $B_G$ can be written as a direct product of the base polytopes $B_{G_1},\ldots,B_{G_n}$.
    Conversely, if $B_G$ can be written as a direct product of polytopes $P_1,\ldots,P_n$, where no $P_i$ is itself a direct product, then these polytopes correspond to the base polytopes of the $2$-connected components $G_1,\ldots,G_n$ of $G$.
\end{lemma}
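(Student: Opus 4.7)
The plan is to translate the statement into the matroid-theoretic language developed earlier in the subsection and then exploit the correspondence between direct sums of matroids and direct products of their base polytopes, together with the uniqueness provided by the Gelfand-Serganova characterisation of matroid base polytopes.

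For the forward direction, I would simply invoke the facts already recorded in the excerpt: the irreducible components of $M_G$ correspond precisely to the $2$-connected components $G_1,\ldots,G_n$, so $M_G = M_{G_1} \oplus \cdots \oplus M_{G_n}$. From the definitions of base polytope and direct sum, the vertices of $B_{M \oplus N}$ are exactly the sums $v_M + v_N$ with $v_M$ a vertex of $B_M$ and $v_N$ a vertex of $B_N$ (using that bases of $M \oplus N$ are disjoint unions of bases of $M$ and $N$), which, once the ground sets are placed on disjoint coordinate axes, coincides with $B_M \times B_N$. Iterating yields $B_G = B_{G_1} \times \cdots \times B_{G_n}$.

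For the converse, suppose $B_G = P_1 \times \cdots \times P_n$ with none of the $P_i$ itself a direct product. The main technical step is to show that each factor $P_i$ is again a matroid base polytope. Since $B_G$ is a $(0,1)$-polytope and the vertices of a product polytope are products of vertices of the factors, each $P_i$ is a $(0,1)$-polytope; moreover, the vertex-coordinate sum is constant on $P_i$ (taking the common value of the corresponding coordinate block across vertices of $B_G$). Crucially, every edge of a product polytope lies inside exactly one factor, so every edge of $P_i$ is an edge of $B_G$ and therefore a translate of some $e_k - e_l$. By the Gelfand-Serganova characterisation, $P_i = B_{M_i}$ for a unique matroid $M_i$, and the hypothesis that $P_i$ is not a direct product forces $M_i$ to be irreducible.

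Combining these, $B_{M_G} = B_{M_1 \oplus \cdots \oplus M_n}$, so the uniqueness of the underlying matroid (Gelfand-Serganova again) gives $M_G = M_1 \oplus \cdots \oplus M_n$ as a decomposition into irreducibles. Since the decomposition of a matroid into irreducible direct summands is unique (the ground set has a canonical partition into connected components), the $M_i$ coincide, after reordering, with the irreducible components of $M_G$, which are the $M_{G_j}$. Hence each $P_i$ equals some $B_{G_j}$, as required. I expect the main obstacle to be the verification that each factor $P_i$ satisfies the edge condition in the Gelfand-Serganova characterisation; the rest is a routine unpacking of the product-of-bases formula and the uniqueness of matroid direct-sum decompositions.
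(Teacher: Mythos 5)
Your proposal is correct and follows essentially the same route as the paper's proof. Both the forward direction (iterating the identity $B_{M\oplus N}=B_M\times B_N$ after moving to disjoint coordinate blocks) and the converse (showing via the Gelfand--Serganova edge/constant-sum characterisation that each factor $P_i$ is itself a base polytope with a unique underlying matroid, whence $M_G=M_1\oplus\cdots\oplus M_n$, and concluding by uniqueness of the decomposition of a matroid into irreducible summands that the $M_i$ are precisely the $M_{G_j}$) match the paper's argument. The only cosmetic difference is that the paper phrases the final step by citing Truemper's result that direct summands of a graphic matroid are themselves graphic and correspond to subgraphs, whereas you deduce the identification from the uniqueness of the irreducible decomposition together with the already-stated fact that the irreducible components of $M_G$ correspond to the $2$-connected components of $G$; these are interchangeable justifications for the same conclusion.
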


\begin{proof}
    The first statement is trivially satisfied.

    The converse follows from two key insights.
    Firstly, the fact that if a base polytope $B_M$ associated to a (not necessarily graphic) matroid $M$ can be written as a direct product $P_1\times P_2$, then $P_1$ and $P_2$ are again base polytopes.
    Secondly, if a graphic matroid $M_G$ can be written as a direct sum $M_1\oplus M_2$, then $M_1$ and $M_2$ are again graphic matroids corresponding to subgraphs of $G$ which have at most one vertex in common.

    The first insight follows from the alternative definition of a base polytope:
    Every edge of $B_M$ is given by an edge in $P_1$ and a vertex of $P_2$, or vice versa.
    Hence, $P_1$ and $P_2$ must satisfy the definition as well, making them base polytopes with unique underlying matroids $M_1$ and $M_2$.
    The second insight is a classical result and can be found, among other places, in \cite[Lemma~8.2.2]{truemper1992matroid}.
\end{proof}

The following proposition is the polytopal version of a classical result due to White.

\begin{lemma}[{\cite[Theorem~1]{white1977basis}}]
    Matroid base polytopes are IDP.
\end{lemma}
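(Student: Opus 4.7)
The plan is to prove the integer decomposition property by showing that every lattice point $x \in kB_M \cap \ZZ^{|E|}$ can be written as a sum $\sum_{i=1}^k \mathbf{1}_{B_i}$ of incidence vectors of bases $B_1, \ldots, B_k$ of $M$. The main tool will be Edmonds' matroid partition theorem, which translates the rank inequalities cutting out $kB_M$ into the existence of a suitable partition.

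First I would recall the classical rank-inequality description of the base polytope: $B_M$ is the set of $x \in \RR^E$ satisfying $x_e \ge 0$, $\sum_{e \in E} x_e = r_M(E)$, and $\sum_{e \in F} x_e \le r_M(F)$ for every $F \subseteq E$. Scaling by $k$, the dilate $kB_M$ has the same description with $r_M$ replaced by $k \cdot r_M$. Given $x \in kB_M \cap \ZZ^E$, I would then form the multiset $\tilde E$ on the ground set $E$ with $e$ occurring $x_e$ times; the inequalities applied to singletons force $x_e \le k$ (loops can be discarded harmlessly), and $|\tilde E| = k \cdot r_M(E)$.

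At this stage Edmonds' matroid partition theorem applies: after passing to the inflation $\tilde M$ of $M$ obtained by replacing each $e \in E$ with $x_e$ parallel copies, $\tilde E$ partitions into $k$ independent sets of $\tilde M$ if and only if $|\tilde F| \le k \cdot r_{\tilde M}(\tilde F) = k \cdot r_M(F)$ for every subset, where $F \subseteq E$ is the set of underlying elements. This hypothesis reduces to the rank inequalities defining $kB_M$ applied to all $F \subseteq E$, so it holds automatically. Since $|\tilde E| = k \cdot r_M(E)$ and each part has size at most $r_M(E)$, each part must have size exactly $r_M(E)$; moreover, an independent set in $\tilde M$ cannot contain two parallel copies of the same element, so each part descends to a genuine basis of $M$. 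Summing the incidence vectors of these $k$ bases recovers $x$, which establishes IDP.

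The main obstacle is invoking Edmonds' partition theorem cleanly; the rest is essentially a translation between the polytopal and matroid-theoretic languages. The only delicate point is the passage to the inflated ground set $\tilde E$, which must be handled so that the resulting independent sets genuinely correspond to bases of $M$ rather than to bases of the inflation, and so that quantifying over subsets $\tilde F \subseteq \tilde E$ reduces (as it does) to quantifying over $F \subseteq E$.
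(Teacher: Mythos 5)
Your argument is correct. The paper itself offers no proof of this lemma (it is stated as a citation to White's Theorem~1), and your proof via Edmonds' matroid partition theorem applied to the parallel-inflated matroid $\tilde M$ is essentially the standard argument behind the cited result: you correctly reduce the Edmonds condition over all $\tilde F\subseteq\tilde E$ to the rank inequalities over $F\subseteq E$ (since $r_{\tilde M}$ depends only on the underlying support and the left side is maximised by taking full parallel classes), use the size count $|\tilde E| = k\,r_M(E)$ to force each of the $k$ independent parts to be a basis, and note that independence excludes two parallel copies of the same element, so each part descends to a genuine basis of $M$.
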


%\begin{proof}
%    See the proof of Theorem~3.1 in \cite{tellander2022cohen}.
%\end{proof}

We can now define Gorensteinness, nearly Gorensteinness, and levelness of a matroid by identifying it with its base polytope.
In \cite{hibi2021gorenstein} and \cite{kolbl2020gorenstein}, a constructive, graph-theoretic criterion of Gorensteinness for graphic matroids was found.
Since the direct product of two Gorenstein polytopes that have the same codegree is again Gorenstein, the characterisation is presented in terms of $2$-connected graphs.

\iffalse
\begin{construction}\label{construction_A}
    Let $G$ and $H$ be $2$-connected multigraphs.
    Let $e_G$ be an edge in $G$ such that $G$ stays $2$-connected after deleting $e_G$, and let $e_H$ be an edge in $H$ such that $H$ stays $2$-connected after contracting $e_H$.
    Then the multigraph $A(G,H,e_G,e_H)$ is constructed by gluing $G$ and $H$ together along $e_G$ and $e_H$ and deleting the gluing edge.
\end{construction}

\begin{construction}\label{construction_B}
    Let $\delta$ be a positive integer and let $G$ and $H$ be $2$-connected multigraphs.
    Let $e_G$ (resp. $e_H$) be an edge in $G$ (resp. $H$) such that $G$ (resp. $H$) stays $2$ connected after contracting $e_G$ (resp. $e_H$).
    Then the multigraph $B_\delta(G,H,e_G,e_H)$ is constructed by gluing $G$ and $H$ together along $e_G$ and $e_H$ and substituting the gluing edge by $\delta-2$ parallel edges.
\end{construction}

\begin{construction}\label{construction_C}
    Let $\delta$ be a positive integer and let $G$ be a $2$-connected multigraph.
    Let $e_1,\ldots,e_{\delta_1}$ be an ear of length $\delta-1$ of $G$.
    Then the multigraph $C_\delta$ is constructed by substituting the ear by $\delta-1$ parallel edges.
\end{construction}
\fi

\begin{proposition}[{\cite[Theorems~2.22 and 2.25]{kolbl2020gorenstein}}]\label{gormatroid}
    Let $G$ be a $2$-connected multigraph.
    Then the following are equivalent.
    \begin{enumerate}
        \item $B_G$ is Gorenstein with codegree $a=2$
        \item Either $G$ is the $2$-cycle or $G$ can be obtained from copies of the clique $K_4$ and Construction~2.15 in \cite{kolbl2020gorenstein}.
    \end{enumerate}
    The following are also equivalent.
    \begin{enumerate}
        \item $B_G$ is Gorenstein with codegree $a>2$
        \item $G$ can be obtained from copies of the cycle $C_{a}$ and Constructions~2.15,~2.17,~2.18 in \cite{kolbl2020gorenstein} with $\delta=a$.
    \end{enumerate}
\end{proposition}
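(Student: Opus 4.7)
The plan is to follow the classical route: $B_G$ is Gorenstein if and only if some dilate $aB_G$ is, up to lattice translation, a reflexive polytope, and to translate this into a combinatorial condition on the $2$-connected multigraph $G$. The two regimes $a = 2$ and $a > 2$ in the statement arise naturally because they correspond to the two families of ``skeleton'' building blocks, namely $K_4$ and the cycles $C_a$.

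The first step is to fix an explicit facet presentation of $B_G$. For a connected matroid $M$ the facets of $B_M$ are indexed by its \emph{flacets}, and in the graphic case these correspond to certain proper subsets $S \subsetneq E(G)$ of the edge set whose associated inequality reads $\sum_{e \in S} x_e \le \mathrm{rk}_{M_G}(S)$. Writing this in the standard form $n_S(x) \ge -h_S$ with primitive inner normal $n_S$ and integral height $h_S$ puts us in the setting of (\ref{facet_pre}). Next, I would compute the codegree $a$ of $B_G$ and locate the would-be interior lattice point of $aB_G$ in terms of $G$: the reflexivity of a translate of $aB_G$ forces a system of integrality and matching conditions coupling the heights $h_S$ as $S$ ranges over all flacets. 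For the basic blocks this can be done directly: for a cycle $C_k$ one finds $a = k$, and for $K_4$ one finds $a = 2$, motivating the split in the statement.

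With the facet description in hand, the sufficiency direction reduces to checking that each of the operations in Constructions~2.15, 2.17, 2.18 of \cite{kolbl2020gorenstein} preserves the Gorenstein property, by verifying that the facet normals and heights of the new base polytope continue to satisfy the ``all facets at lattice distance $1$ from a unique interior lattice point'' condition at the correct dilation. This is a local, mostly mechanical verification once Step~1 is carried out, and it naturally explains why the choice of gluing edge must respect $2$-connectedness after deletion or contraction.

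The main difficulty, and the heart of the argument, is the necessity direction: every $2$-connected $G$ with $B_G$ Gorenstein must arise from a block ($K_4$ when $a = 2$, $C_a$ when $a > 2$) via the listed gluing operations. I would attack this by induction on the complexity of $G$ (for example on the number of edges, or on the cycle-space dimension), where the key step is to exhibit, inside an arbitrary Gorenstein $B_G$, a substructure (an ear, a contraction edge, or a gluing edge) whose removal or contraction yields a smaller $2$-connected graph whose base polytope is still Gorenstein with the same codegree. The existence of such a reducing move should be forced by the tightness of the flacet matching conditions from Step~2; extracting it combinatorially, and ruling out the $2$-connected configurations that admit no such reduction, is where the bulk of the work lies.
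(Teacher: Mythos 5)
The paper does not actually prove this proposition: it is imported verbatim from K{\"o}lbl's cited work (Theorems~2.22 and~2.25 there), and the present paper only states and uses it. So there is no ``paper's own proof'' to compare against; I can only assess your reconstruction on its own terms.

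Your outline follows the natural route and your base-case computations are correct ($B_{C_k}$ is a unimodular $(k-1)$-simplex with codegree $k$, and $B_{K_4}$ has codegree $2$). The framing via the reflexive-dilate characterisation of Gorensteinness and the flacet description of facets of $B_M$ is also the right setting, and the sufficiency direction (verifying that Constructions~2.15, 2.17, 2.18 preserve the flacet-distance conditions under deletion/contraction and gluing) is indeed a local, checkable matter. However, as written this is a research plan rather than a proof, and the gap you yourself identify in the necessity direction is not a small one: you would need to show that every $2$-connected $G$ with $B_G$ Gorenstein admits \emph{some} reducing move (an ear, a contractible edge, or a splitting edge) after which Gorensteinness and the codegree are preserved, and you would also need to rule out ``irreducible'' $2$-connected graphs other than $K_4$ and $C_a$ with this property. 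You gesture at ``tightness of the flacet matching conditions'' but give no mechanism for extracting the reducing move from it, nor a classification of the terminal cases. Until that step is carried out, the induction has no engine. If you want to actually close the gap, the place to start is an explicit description of what the reflexivity condition on $aB_G$ says about the rank function of $M_G$ on flacets, and from that try to force the existence of a low-degree vertex, an ear, or a pair of parallel edges whose removal keeps those conditions intact.
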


The full characterisation of nearly Gorenstein graphic matroids is thus an immediate corollary of Theorem~\ref{01} and Proposition~\ref{gormatroid}.

\begin{corollary}\label{cor:graphic_matroids}
    Let $G$ be a multigraph with $2$-connected components $G_1,\ldots,G_n$, then the following are equivalent.
    \begin{enumerate}
        \item $B_G$ is nearly Gorenstein
        \item $B_{G_1},\ldots,B_{G_n}$ are Gorenstein with codegrees $a_1,\ldots,a_n$, where $\abs{ a_i - a_j} \leq 1$ for $1\leq i < j \leq s$.
    \end{enumerate}
\end{corollary}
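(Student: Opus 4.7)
The plan is to deduce this corollary directly from Theorem~\ref{01}, since the characterisation of Gorenstein graphic matroids (Proposition~\ref{gormatroid}) is implicit in the phrasing ``$B_{G_i}$ is Gorenstein'' and need not be unpacked.

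First I would observe that the first lemma of this subsection gives the Minkowski decomposition $B_G = B_{G_1} \times \cdots \times B_{G_n}$, where each factor $B_{G_i}$ is not itself a direct product (since each $G_i$ is $2$-connected, equivalently, the matroid $M_{G_i}$ is irreducible). Combined with Lemma~\ref{dec01}, this displays $B_G$ as a product of indecomposable $(0,1)$-polytopes; by the uniqueness of such a decomposition, it agrees, up to reordering and change of coordinates, with any other indecomposable factorisation of $B_G$. Moreover, the second lemma of this subsection (White's theorem) tells us that $B_G$ is IDP, so the hypotheses of Theorem~\ref{01} are satisfied.

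For the implication $(1) \Rightarrow (2)$, suppose $B_G$ is nearly Gorenstein. Theorem~\ref{01} then produces a decomposition $B_G = P_1 \times \cdots \times P_s$ into Gorenstein $(0,1)$-polytopes with $|a_{P_i} - a_{P_j}| \le 1$. I would further refine each $P_i$ into its indecomposable $(0,1)$-polytope factors via Lemma~\ref{dec01}, and invoke the fact that an indecomposable factor of a Gorenstein polytope is itself Gorenstein with the same codegree (which follows by inspecting reflexivity under direct products: a reflexive polytope factors only as a product of reflexive polytopes, and the unique interior lattice point factors accordingly). By the uniqueness of the indecomposable factorisation of $B_G$, the resulting refinement must coincide, up to reordering, with $B_{G_1} \times \cdots \times B_{G_n}$. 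Therefore each $B_{G_i}$ is Gorenstein, and the pairwise bound $|a_i - a_j| \le 1$ on the codegrees is inherited from the original bound on the $P_j$'s.

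The converse $(2) \Rightarrow (1)$ is an immediate application of Theorem~\ref{01} to the factorisation $B_G = B_{G_1} \times \cdots \times B_{G_n}$. The only substantive step in the whole argument is the claim that Gorensteinness of a direct product of $(0,1)$-polytopes descends to each factor together with the codegree; this is either a short direct check using the reflexivity criterion, or can be extracted from \cite[Corollary~2.8]{herzog2019measuring}.
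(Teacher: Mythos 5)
Your proof is correct, but it takes a slightly more roundabout route than what the paper has in mind. The paper declares the corollary ``immediate'' because it expects the reader to apply the internal machinery of Theorem~\ref{01}'s proof (Lemma~\ref{dec01}, Lemma~\ref{miyashita}, Theorem~\ref{thm:indecomposable}, \cite[Corollary 2.8]{herzog2019measuring}) directly to the factorisation $B_G = B_{G_1} \times \cdots \times B_{G_n}$: since each $B_{G_i}$ is Minkowski-indecomposable (the $G_i$ are $2$-connected, so $B_{G_i}$ is not a direct product, which for $(0,1)$-polytopes is equivalent to indecomposability by the proof of Lemma~\ref{dec01}), one runs through the proof of Theorem~\ref{01} with the $B_{G_i}$ as the indecomposable factors and the conclusion drops out with no extra input. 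You instead treat Theorem~\ref{01} purely as a black box, obtaining \emph{some} Gorenstein decomposition $P_1 \times \cdots \times P_s$, then refine it to indecomposables, invoke uniqueness of the indecomposable factorisation of $B_G$, and appeal to the auxiliary fact that a nontrivial factor of a Gorenstein product of $(0,1)$-polytopes is Gorenstein with the same codegree. That auxiliary fact is true (it follows, e.g., from the observation that $a(P \times Q) = \max(a_P, a_Q)$ and that a positive-dimensional lattice polytope has a unique interior lattice point in only one dilate, or from the Segre-product results in \cite{herzog2019measuring}), but it is an extra step the paper avoids by not reopening Theorem~\ref{01}'s statement. Both routes are valid; yours is more self-contained at the statement level, while the paper's is shorter if you are willing to reuse Theorem~\ref{01}'s internal argument.
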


\bibliographystyle{plain}
\bibliography{References}

\end{document}